\newtheorem{Theorem}{Theorem}
\newtheorem*{theorem*}{Main Theorem}
\newtheorem{theorem}{Theorem}[section]
\newtheorem{lemma}[theorem]{Lemma}
\newtheorem{corollary}[theorem]{Corollary}
\newtheorem{proposition}[theorem]{Proposition}
\newtheorem{conjecture}[theorem]{Conjecture}
\newtheorem{question}[theorem]{Question}
\newtheorem{analysis}[theorem]{Analysis}
\theoremstyle{definition}
\newtheorem{definition}[theorem]{Definition} 
\newtheorem{example}[theorem]{Example} 
\theoremstyle{remark}
\newtheorem{remark}[theorem]{Remark}
\numberwithin{equation}{section}
\newcommand{\ot}{\otimes}
\newcommand{\ra}{\rightarrow}
\newcommand{\BR}{\mathbb{R}}
\newcommand{\divides}{\mid}
\newcommand{\notdivides}{\nmid}
\DeclareMathOperator{\Ima}{Im}
\newcommand{\IIDiag}{\raisebox{-0.33\height}{\includegraphics[scale=0.18]{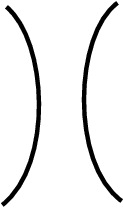}}}
\newcommand{\XDiag}{\raisebox{-0.33\height}{\includegraphics[scale=0.18]{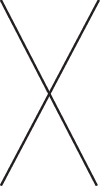}}}
\newcommand{\PMEdgeDiag}{\raisebox{-0.33\height}{\includegraphics[scale=0.18]{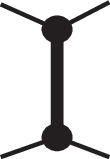}}}
\newcommand{\dumbbell}{\bigcirc\hspace{-.15cm}-\hspace{-.15cm}\bigcirc}
\newcommand{\qdim}{q\!\dim}
\newcommand{\FourRegPMEdge}{\raisebox{-0.36\height}{\includegraphics[scale=0.27]{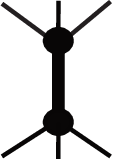}}}
\newcommand{\IIIsmoothing}{\raisebox{-0.33\height}{\includegraphics[scale=0.22]{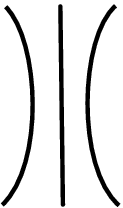}}}
\newcommand{\IXsmoothing}{\raisebox{-0.33\height}{\includegraphics[scale=0.22]{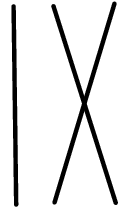}}}
\newcommand{\XIsmoothing}{\raisebox{-0.33\height}{\includegraphics[scale=0.22]{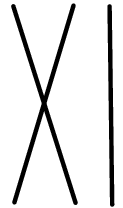}}}
\newcommand{\XXIsmoothing}{\raisebox{-0.33\height}{\includegraphics[scale=0.22]{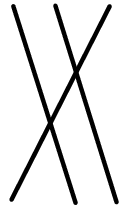}}}
\newcommand{\IXXsmoothing}{\raisebox{-0.33\height}{\includegraphics[scale=0.22]{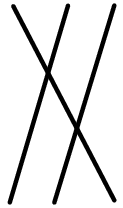}}}
\newcommand{\XXXsmoothing}{\raisebox{-0.33\height}{\includegraphics[scale=0.22]{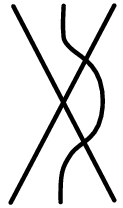}}}
\newcommand{\vertexbracketvertex}{\raisebox{-0.33\height}{\includegraphics[scale=1.2]{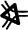}}}
\newcommand{\vertexbracketzero}{\raisebox{-0.33\height}{\includegraphics[scale=1.2]{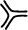}}}
\newcommand{\vertexbracketone}{\raisebox{-0.33\height}{\includegraphics[scale=1.2]{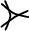}}}
\newcommand{\HorseshoeDiag}{\raisebox{-0.33\height}{\includegraphics[scale=0.18]{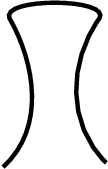}}}
\newcommand{\RibbonDiag}{\raisebox{-0.33\height}{\includegraphics[scale=0.18]{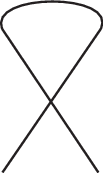}}}
\newcommand{\BZ}{\mathbb{Z}}
\newcommand{\tfp}[2]{{\langle}#1{:}#2{\rangle}_{2}}
\newcommand{\tfb}[1]{{\langle}#1{\rangle}_{2}}
\newcommand{\findex}[2]{{[}#1{:}#2{]}}
\newcommand{\fsize}[2]{{|}#1{:}#2{|}}
\newcommand{\threefp}[2]{{\langle}#1{:}#2{\rangle}_{3}}
\newcommand{\Der}[2]{\frac{\del#1}{\del#2}}
\def\del{\partial}
\def\del{\partial}
\begin{document}
\pagenumbering{arabic}

\title{ A New Cohomology Theory for Planar Trivalent Graphs with Perfect Matchings}

\author[S. Baldridge]{Scott Baldridge}

\address{Department of Mathematics, Louisiana State University \newline
\hspace*{.375in} Baton Rouge, LA 70817, USA} \email{\rm{sbaldrid@math.lsu.edu}}

\subjclass{}
\date{}

\begin{abstract}
We introduce a new cohomology theory for planar trivalent graphs with perfect matchings.  The graded Euler characteristic of the cohomology is a one variable polynomial called the $2$-factor polynomial that, if nonzero when evaluated at one, implies that the perfect matching is even and therefore the graph is $4$-face colorable.  We also define several new polynomials invariants of graphs with and without perfect matchings that are invariants of abstract tensors systems and spin networks defined by Roger Penrose in the 1970s. We show how some of these polynomials can  be ``categorified'' into their own homology theories.  
\end{abstract}

\maketitle

\section{Introduction}

The introduction of Khovanov homology \cite{K1} to knot theory provided a beautiful example of how Topological Quantum Field Theories (TQFTs) have been able to connect  fields together.  Khovanov's categorification of the  Jones polynomial quickly lead to new and surprising results in low dimensional topology, including Rasmussen's proof of the Milnor conjecture \cite{R} and Kronheimer and Mrowka's use of Khovanov homology to detect the unknot \cite{KM4}. In this paper, we begin to develop the same type of connections for graphs.

There are  recent papers that also develop TQFTs of trivalent graphs (cf. \cite{KM3, KM2, KM1, KR, RW}).  They are similar to the theory presented in this paper in the sense that they come out of a gauge- and category-theoretic perspective---a viewpoint not studied in graph theory in detail before. However, the cohomology of this paper is based upon a Kauffman-like bracket that is  similar to the Jones polynomial, and thus is more adaptable to arguments and ideas that are already found in knot theory. For example, this paper can be used to motivate a Lee-type homology theory and a Rasmussen ``$s$-invariant'' for graphs (cf. \cite{BKR}).

\subsection{Statements of the main theorems} To  investigate TQFTs of planar graphs similar to the Jones polynomial and Khovanov homology, this paper addresses two questions: (1) What polynomial invariants based upon a Kauffman-like bracket exist for planar graphs, and (2) can any of these polynomials be ``categorified'' into a cohomology theory?   These questions are answered by the two main theorems of this paper, Theorem~\ref{theorem:general_poly_invariants} and Theorem~\ref{thm:main-theorem} respectively.

Unlike the Jones polynomial in knot theory, there are plethora of polynomial invariants of graphs with perfect matchings---one for each 3-tuple of Laurent polynomials.  Some of these polynomials contain important information about graphs.  Briefly, here is the setup to describe the first  theorem: Given an abstract planar trivalent graph $G$ with a perfect matching $M$, the polynomial is defined using a special plane graph $\Gamma$ of the pair $(G,M)$ called a  perfect matching graph  (Definition \ref{def:perfect-matching-graph}). Two perfect matching graphs of the pair $(G,M)$ are related by a sequence of flip moves (Figure \ref{fig:flip-moves} in Section~\ref{section:planar_trivalent_graphs}) that are analogous to Reidemeister moves for knots. To get the polynomial, resolve the perfect matchings edges in two different ways inductively, 
replacing immersed circles  when they appear with a Laurent polynomial expression. Then: 

\begin{Theorem}
Let $G$ be a planar trivalent graph with perfect matching $M$.  If $\Gamma_M$ is a perfect matching graph for the pair $(G,M)$, generate an element  $\langle \Gamma_M \rangle \in \BZ[A,B,C]$ characterized by:
\begin{eqnarray}
\langle \PMEdgeDiag \rangle &=&  A \langle \IIDiag \rangle \ + \ B \langle \XDiag \rangle \label{eq:EMformula}\\ 
\langle \bigcirc  \rangle & = & C  \label{eq:immersed_circle}\\\
\langle \Gamma_1 \sqcup \Gamma_2 \rangle&=& \langle \Gamma_1 \rangle \cdot \langle \Gamma_2 \rangle \label{eq:disjoint_graphs_identity}
\end{eqnarray}
Then $\langle \Gamma_M \rangle$  depends only upon the pair $(G,M)$ and not the perfect matching graph used to define it.  
\label{theorem:general_poly_invariants}
\end{Theorem}

The element $\langle \Gamma_M \rangle$ is called the {\em bracket} of $\Gamma_M$. The bold edge in Equation~\ref{eq:EMformula} above represents an edge in the perfect matching set $M$, and $\bigcirc$ in Equation~\ref{eq:immersed_circle} represents any immersed circle (with no vertices). These vertexless circles are considered as trivalent graphs in this paper. 

In terms of answering the second question above, the most important  polynomial invariant of this paper is the {\em $2$-factor polynomial}.  This  polynomial, denoted $\tfp{G}{M} \in \BZ[q,q^{-1}]$, is defined using the bracket 
\begin{align*}
\langle \PMEdgeDiag \rangle &= \langle \IIDiag  \rangle  - q \langle \XDiag \rangle, \mbox{ \ and} \\ 
\langle \bigcirc \rangle &= q^{-1}+q. 
\end{align*}
One should immediately recognize the similarities the $2$-factor polynomial has to the $q$-variable form of the Kauffman bracket in knot theory. This is partially how it was discovered. The $2$-factor polynomial contains valuable combinatorial information about the graph not seen before in graph theory: it counts the number of $2$-factors, i.e., sets of $2$-cycles of the graph, that span the perfect matching edges (see Theorem~\ref{conj:f_size_conjecture}). This count, in turn, can be used to describe certain 3-edge colorings of the trivalent graph, which is important in the study of the Four Color Theorem. Because of its central role in this paper and importance to graph theory in general, we will describe some of the properties of this new polynomial in Subsections~\ref{subsec:the-2-factor-poly} and \ref{subsection:distinguishing-pm-of-a-graph}, and revisit it and its generalizations throughout the paper.

The $2$-factor polynomial can be categorified much like Khovanov homology categorifies the Jones polynomial.  This answers the second question above and is the content of the second theorem.  To describe this theorem, first let $\Gamma$ be a perfect matching graph of the pair $(G,M)$.  Like  the roles that  crossings play in the construction of Khovanov homology of a knot, the perfect matchings edges are resolved into two different types of smoothings to create a hypercube of smoothings.  In this hypercube,  states (vertices) are replaced with $\BZ_2$-coefficient vector spaces and edges are replaced with maps to form a bigraded differential chain complex $C^{*,*}(\Gamma)$  (see Section~\ref{section:2-factor-cohomology}).  The homology of this chain complex, $H^{*,*}(\Gamma)$, is invariant of the flip moves (see Section~\ref{section:invariant-under-0-and-1-flip-moves} and Section~\ref{section:cohomology-2-flip-move-invariance}), giving:  

\begin{Theorem} Let $G$ be a planar trivalent graph and $M$ a perfect matching of $G$.  The bigraded space, $H^{*,*}(\Gamma)$, of a perfect matching graph $\Gamma$ depends only on the pair $(G,M)$ and is denoted $H^{*,*}(G,M)$.  Furthermore, the graded Euler characteristic of $H^{*,*}(G,M)$ is the $2$-factor polynomial:
$$\tfp{G}{M}(q) = \sum_{i\in \BZ} (-1)^{i}\qdim(H^{i,*}(G,M)).$$
 \label{thm:main-theorem}
\end{Theorem}

We  prove more than Theorem~\ref{thm:main-theorem} in this paper: In Section~\ref{section:invariant-under-0-and-1-flip-moves}, we show that the differential chain complex $C^{*,*}(\Gamma)$  defined from  perfect matching graph $\Gamma$ is itself  invariant under the flip moves (see Theorem~\ref{theorem:main-theorem-of-cohomology}).  This invariance allows for the generalization of this cohomology to other related polynomials (cf. Section~\ref{section:extending-to-graphs}). 

The difficult step in the proof of Theorem~\ref{thm:main-theorem} is showing that the chain complex is  invariant under the $2$-flip move.  In fact, almost the entire proof is dedicated to this one case. Due to the similarities to Khovanov homology, one might think that a proof of the 2-flip move should already be found in knot theory (see Theorem~\ref{theorem:add-perfect-matching-edge} for an example of why this thought is reasonable). The reason why it does not exist in knot theory is thoroughly explored in \cite{BKR}, a paper that came after the first version of this one.  One of the motivations of that paper was to investigate the possibility of such a proof.  In that paper, we show that the corresponding theorem in knot theory would be about ``mutations'' of virtual links diagrams with virtual crossings, i.e., the $2$-flip move can thought of as a ``mutation''  of a virtual tangle that {\em cannot} be embedded in a $3$-ball. Such an embedding is, of course, a necessary condition to perform a mutation.  Therefore, a proof of the $2$-flip move using ideas from knot theory occurs in exactly the situation when the theorem for it does not make sense to even write down (cf. Section 5.2 of \cite{BKR}). Thus, Theorem~\ref{thm:main-theorem} and its proof are  new in the literature: The cohomology of this paper is fundamentally about graphs, not knots. 

Even though it appears to be a theorem of graphs, Theorem~\ref{thm:main-theorem} is subtly related to topology---a connected ribbon graph can be thought of as a $1$-dimensional cell complex for a complex surface (planar graphs are ribbon graphs for spheres).  Future research will explore how this TQFT-like theory applies to problems in the theory of moduli spaces of stable curves with $n$-marked points.
 
\subsection{Other polynomials and further consequences of the main theorems}
Theorem~\ref{theorem:general_poly_invariants}  and Theorem~\ref{thm:main-theorem}  both play important roles in investigating ribbon graphs with perfect matchings through the lens of TQFTs:  The second theorem, thought of more broadly, is an existence result, i.e., there exists at least one Khovanov-like cohomology theory based upon TQFT that contains useful graph theoretic information.  The first theorem, on the other hand, provides the fertile ground for other TQFT-like homology theories yet to come.  

Theorem~\ref{theorem:general_poly_invariants} has rich soil to cultivate.  While the $2$-factor polynomial is a new invariant of the pair $(G,M)$, it was not the first attempt to develop invariants for graphs based upon brackets in the history of graph theory.  It discovery is directly due to the author studying Roger Penrose's 1971 paper on abstract tensor systems \cite{Penrose}. In that paper, Penrose describes several {\em number-only} Kauffman-like brackets of planar trivalent graphs.  For example, the {\em Penrose Formula}, $[G]$, defined by $\langle \PMEdgeDiag \rangle = \langle \IIDiag  \rangle  -  \langle \XDiag \rangle$ and $\langle \bigcirc \rangle = 3$, is a number that calculates the 3-edge colorings of a trivalent planar graph $G$ (cf.~\cite{Kauffman} for a recent paper on the formula).  For fifty years, mathematicians have been trying to turn the Penrose Formula into a polynomial invariant without success (other than replacing $3$ with a variable). One of the insights of this paper was to see that a flip-move invariant polynomial could be defined if a perfect matching was included.  As explained in Section~\ref{section:extending-to-graphs}, the  choice of a perfect matching is not a restriction for defining invariants of the graph itself: one can ``blow-up'' the graph at each vertex to get a canonically-defined perfect matching graph and thus define an invariant of the graph itself. Thus the main theorems give invariants of graphs with perfect matchings and invariants of graphs by themselves, both of which are useful in graph theory in different ways.

Penrose's paper is a source of inspiration and motivation for many of the other polynomials introduced in this paper. These polynomials contain valuable combinatorial information about graphs.  For example, the {\em four color polynomial}, $P_4(G,M)\in \BZ[q,q^{-1}]$, defined by  the bracket $\langle \PMEdgeDiag \rangle = \langle \IIDiag  \rangle  - q\langle \XDiag \rangle$ and $\langle \bigcirc \rangle = q^{-1}+1+q$ in Theorem~\ref{theorem:general_poly_invariants}, extends the Penrose Formula to a polynomial  (cf. Section~\ref{section:polynomial_invariants}). When this polynomial is evaluated at $q=1$, the Penrose Formula is recovered along with the number of $3$-edge colorings of the graph.   It is well known that the number of $3$-edge colorings are  useful in proving the four color theorem: If there are $3$-edge colorings for all bridgeless planar trivalent graphs, then the four color theorem is true.   

Blowing-up of a  graph at each of its vertices produces an associated graph with a canonically-defined perfect matching (see Definition~\ref{def:blow-up-of-a-graph}). Theorem~\ref{theorem:general_poly_invariants} continues to apply to flip-moves on these graphs, and hence each of the polynomials defined above (including all polynomials defined in Theorem~\ref{theorem:general_poly_invariants}) has a corresponding polynomial that only depends on the original graph.  In the case of the four-color polynomial, the polynomial, $P_4(G)$, when evaluated at $q=1$, continues to be equal to the number of $3$-edge colorings of the original graph $G$.  This is because of the one-to-one correspondence between $3$-edge colorings of $G$ and the $3$-edge colorings of the blow-up. 

There are new polynomials that can be defined from the blow-up of a graph.  We define the {\em binor polynomial} by $\langle \PMEdgeDiag \rangle = \langle \IIDiag  \rangle  - q \langle \XDiag \rangle$ and $\langle \bigcirc \rangle = -q^{-1}-q$ in Theorem~\ref{theorem:general_poly_invariants} on the blow-up of a trivalent plane graph of $G$.  This invariant polynomial of $G$  corresponds to a negative dimensional abstract tensor system defined in \cite{Penrose}.  When evaluated at one, the resulting number is a fractional multiple of the Penrose Formula, i.e., it too calculates the number of 3-edge colorings of the graph (see Proposition~\ref{prop:binor-polynomial}).

We also introduce a bracket polynomial, the {\em vertex polynomial}, $\langle G \rangle_v$,  that can be thought of as resolving the vertices of a graph instead of the perfect matching edges of the graph (see Theorem~\ref{thm:vertex-poly}). This bracket corresponds to a special abstract tensor system defined in \cite{Penrose} and is computed using a subset of the states of the hypercube of states of the blow-up of $G$ (cf. Section~\ref{section:2-factor-cohomology} for the definition of the hypercube). When evaluated at $q=1$, it is a fractional multiple of the Penrose Formula and so determines when a planar graph is $4$-face colored or not.  However, in this special case, since the differential chain complex of the hypercube is invariant under flip-moves (via Theorem~\ref{theorem:main-theorem-of-cohomology}), a {\em vertex cohomology} can also be defined whose graded Euler characteristic is the vertex polynomial. Thus, Theorem~\ref{thm:main-theorem}, while initially defined for a specific TQFT-like theory, can be used to define other cohomology theories. 

One of the exotic aspects of the differential for the vertex cohomology is that it is defined using a composition of maps that correspond to three connected edges of the hypercube (see Example~\ref{ex:vertex-cohomology}).  This is not like any type of differential found in knot theory (as of yet) on the Khovanov hypercube of states. This exotic aspect to the differential also suggests new ways to look at states and hypercubes.
 
\subsection{Generalizations to $n$-regular planar graphs} Finally, we note that many of the polynomials in this paper can be generalized to $n$-regular planar graphs with perfect matchings with $n>3$.  In Section~\ref{subsection:generalizing-the-bracket}, the {\em $3$-factor polynomial}, $\threefp{G}{M}$, is defined for planar $4$-regular graphs with perfect matchings. It is an example of how one can generalize Theorem~\ref{theorem:general_poly_invariants} to all regular planar graphs.  The $3$-factor polynomial is a polynomial version of  Penrose's ``generalized Kronecker delta'' of ordinary tensor systems defined in \cite{Penrose}.  Similar to the $2$-factor polynomial,  when it is evaluated at 1, the $3$-factor polynomial counts the number of $4$-edge colorings of $G$ that have all perfect matching edges labeled the same color (cf. Equation~\ref{eq:3-factor-eval-at-1} and Corollary~\ref{cor:3-edge-color-of-G}).  This is the first interpretation of Penrose's generalized Kronecker delta for planar $n$-regular  graphs with $n>3$ in terms of graph theory  since his paper first appeared.

\subsection{Outline of the paper} This paper is organized as follows.  In Section~\ref{section:planar_trivalent_graphs}, a short introduction to planar trivalent graphs, flip-moves, and perfect matchings is given. Theorem~\ref{theorem:general_poly_invariants} is proved in Section~\ref{section:polynomial_invariants}. This section also includes some applications and conjectures about the $2$-factor polynomial and how to extend Theorem~\ref{theorem:general_poly_invariants}  to $n$-regular graphs in general. The cohomology of Theorem~\ref{thm:main-theorem} is defined in Section~\ref{section:2-factor-cohomology} and proven to be invariant of the flip-moves in Section~\ref{section:invariant-under-0-and-1-flip-moves}.  Examples of how the cohomology theory is a stronger invariant than the $2$-factor polynomial are given in Section~\ref{section:Examples}. Finally, in Section~\ref{section:extending-to-graphs}, we show how to extend the main theorems to get invariants of planar trivalent graphs  and use the idea of a blow-up of a graph to define other new polynomial and cohomology invariants based upon the main theorems.

\vspace{-.1cm}

\tableofcontents
\section*{Acknowledgements}
I greatly appreciate the feedback from the following people while developing this result:  Oliver Dasbach, Lou Kauffman, Ben McCarty, Bogdan Oporowski,  James Oxley, and Will Rushworth. I would also like to thank the Institute for Pure and Applied Mathematics (IPAM) for sponsoring a workshop that lead to me working on this project.  

%

\section{Planar trivalent graphs}
\label{section:planar_trivalent_graphs}

All graphs discussed in this paper are multigraphs.  A {\em multigraph} $G$ or $G(V,E)$ is the ordered pair $(V,E)$ where $V$ is a finite set of elements called vertices and $E$ is a finite set of edges, defined as follows:  $E$ is the set of ordered pairs $(V', i)$ where $V'$ is a node set containing zero, one or two elements of  $V$ and $i$ is an index from one to the number of instances of ordered pairs of E that have the same node set.  In this paper, multigraphs  include vertexless ``circles,'' i.e., where the node set for an edge is empty, and the empty graph where the vertex set and edge set are both empty.  Multigraphs can have loops.  A {\em loop} is an edge whose node set contains exactly one element.  The {\em degree} of a vertex of a graph is the number of edges the vertex belongs to, with loops counted twice.  A {\em trivalent graph} (or {\em cubic graph} or a {\em $3$-regular graph}) is a graph in which every vertex has degree three. 

This paper is concerned with {\em planar graphs}, i.e., graphs that are embeddable in the $2$-sphere. A given embedding of a planar graph, $i:G \ra S^2$, is called a \emph{plane graph}, and its image in $S^2$ is denoted by $\Gamma$. Vertices and edges of $\Gamma$ are the images of the vertices and edges of $G$, and when the graph is connected, all faces of $\Gamma$ are disks.

\subsection{Relationship between plane graphs of planar trivalent graphs}
\label{subsection:relationship-between-plane-graphs}

Whitney developed the theory of how two different plane graphs of a graph are  related (cf. \cite{W}). We follow Greene's formulation \cite{G} of Whitney, but we restrict to plane graphs of planar trivalent graphs  to simplify the moves that relate two plane graphs. 

All plane graphs of the same planar trivalent graph are related to each other by three types of reflections together with local isotopy.  To describe these moves, let $G$ be a planar trivalent graph and $\Gamma$ be a plane graph of $G$ in $S^2$.  Consider a Jordan curve $S^1 \subset S^2$ such that the curve intersects $\Gamma$ in  either 0, 1, or 2 points  in the interiors of some edge(s) of $\Gamma$. In this paper, we cut along edges (instead of vertices) to emphasize how the edges in a perfect matching of $G$ behave under the reflection moves.  


Choose either disk bounded by the Jordan curve $S^1$ in $S^2$, remove the disk, and re-glue it (with the part of the graph it contains) by an orientation-reversing homeomorphism of $S^1$ that fixes the points in the intersection set $S^1\cap \Gamma$ (if the intersection set is empty, any orientation-reversing homeomorphism will do).  This reflection preserves vertices and edges of $G$. Following Greene, we call these reflections {\em flip moves} and name them by the number of intersection points (see Figure~\ref{fig:flip-moves}).

\begin{figure}[h]
\includegraphics[scale = 1]{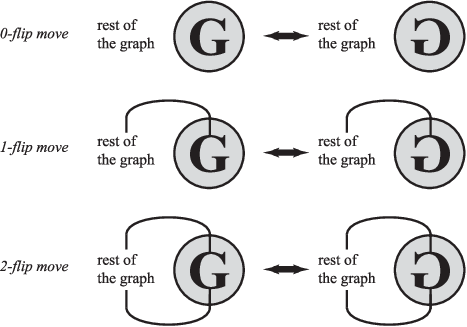}
\caption{The different types of flip moves.}
\label{fig:flip-moves}
\end{figure}

The following theorem records the relationship between plane graphs of a planar trivalent graph:

\begin{theorem}[Greene \cite{G}, Lemma 4.2, see also Mohar-Thomassen \cite{MT}, Theorem 2.6.8]
Any two plane graphs of a connected planar trivalent graph are related by a sequence of flip moves and local isotopies.
\label{graph-iso}\label{thm:graph-iso}
\end{theorem}

Note that the theorem above is stated in a different language to that of Greene and Mohar-Thomassen.  Those papers were concern with any type of planar graph. Restricting to planar trivalent graphs and using Jordan curves that intersect the graph along edges instead of vertices gives the formulation above.

\subsection{Perfect matchings}
\label{section:perfect_matchings}  A {\em perfect matching} in a graph $G(V, E)$ is a set of edges $M\subset E$ such that every vertex of the corresponding subgraph $G_M(V, M)$  has degree 1.  This definition implies that  (1) no two edges in the matching share a common endpoint, (2) a loop cannot be an edge in a matching, and (3) every vertex in $G$ is `matched' with another vertex through a connecting edge in $M$.  Clearly, $G_M$ is a spanning subgraph of $G$, in fact, it is a $1$-regular subgraph.  For this reason, perfect matchings are also called $1$-factors.

Not all planar trivalent graphs have perfect matchings, but bridgeless ones do.  Petersen showed that there are perfect matchings for all bridgeless trivalent graphs \cite{P}.  A {\em bridge} (or {\em isthmus} or {\em cut-edge}) is an edge of a graph whose deletion increases the number of connected components of the resulting graph.  An edge is a bridge if and only if it is not contained in any cycle.  A bridgeless trivalent graph is loop free.

There are positive results to the question of the existence of perfect matchings even in the presence of bridges.   A connected graph $G$ has a perfect matching if it contains at most two bridges (Petersen \cite{P}) or if the bridges lie on a single path of $G$ (Errera \cite{Err}).  Tutte gave necessary and sufficient conditions for the existence of a perfect matching \cite{T1}.  In fact, perfect matchings are abundant.  Lov\'{a}sz and Plummer  conjectured that, for  trivalent bridgeless graphs, the number of perfect matchings of graphs grows exponentially with the number of vertices \cite{LP}.  Esperet, Kardo\v{s}, King, Kr\'{a}l', and Norine proved this conjecture in \cite{EKKKN}.

Flip moves and isotopy of plane graphs can be used to tell when two planar trivalent graphs and a choice of perfect matching edges for each are  equivalent.  To build up to a theorem like Theorem~\ref{thm:graph-iso} for a graph with a perfect matching (Theorem~\ref{thm:perfect-matching-iso} below), we need  two lemmas and an analysis of the flip moves with respect to  perfect matchings.  The first lemma implies that, for all perfect matchings $M$ of $G$, all 1-flip moves must occur for a Jordan curve in a plane graph of $G$ that intersects a perfect matching edge in $M$.

\begin{lemma} If $G$ is a planar trivalent graph with a given perfect matching $M$, then all of the bridges of $G$ are contained within $M$.  \label{thm:bridge-is-in-perfectmatching}
\end{lemma}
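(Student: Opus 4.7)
The plan is to argue by contradiction using a parity count. Suppose $e = uv$ is a bridge of $G$ with $e \notin M$. Since $e$ is a bridge, deleting $e$ disconnects $G$ into two components; call the one containing $u$ by $G_u$ and the one containing $v$ by $G_v$.

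First I would observe that the restriction $M \cap E(G_u)$ is a perfect matching of $G_u$. Indeed, since $e \notin M$, the vertex $u$ must be matched by $M$ to one of its two remaining neighbors, all of which lie in $G_u$; and every other vertex of $G_u$ has all three of its edges in $G_u$, so its $M$-partner likewise lies in $G_u$. Hence every vertex of $G_u$ is matched inside $G_u$, forcing $|V(G_u)|$ to be even.

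Next I would do a degree count inside $G_u$. In the trivalent graph $G$, every vertex has degree $3$; passing to $G_u$ only removes the edge $e$, so $u$ now has degree $2$ while every other vertex of $G_u$ still has degree $3$. By the handshaking lemma,
\[
2\,|E(G_u)| \;=\; \sum_{w \in V(G_u)} \deg_{G_u}(w) \;=\; 2 + 3\bigl(|V(G_u)| - 1\bigr) \;=\; 3|V(G_u)| - 1,
\]
so $3|V(G_u)| - 1$ must be even, which forces $|V(G_u)|$ to be odd. This contradicts the previous paragraph, so no such bridge $e$ can lie outside $M$, and the lemma follows.

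There is essentially no obstacle here: the argument is a clean parity contradiction, and it does not even use planarity (only trivalence and the existence of the perfect matching). The only minor thing to be careful about is handling the degenerate case where one of $G_u$, $G_v$ is a single vertex, but then that component has one vertex of degree $2$ and zero other vertices, so $|V|=1$ is odd and the same parity contradiction applies without modification.
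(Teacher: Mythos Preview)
Your argument is correct, but the paper takes a shorter and more structural route. The paper simply observes that $G'=(V,E\setminus M)$ is $2$-regular (each vertex drops from degree $3$ to degree $2$), hence is a disjoint union of cycles; therefore every edge outside $M$ lies on a cycle of $G'\subset G$ and so cannot be a bridge. That's the whole proof.

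Your parity approach via the handshaking lemma reaches the same contradiction but through more machinery: you split the graph along the bridge, argue that $M$ restricts to a perfect matching on each side (forcing even order), and then count degrees to force odd order. This is fine, and like the paper's proof it does not use planarity. One small remark on your degenerate case: if $G_u$ is a single vertex $u$ with a loop, the contradiction is not really the parity one---rather, $u$ cannot be matched at all within $G_u$ (loops are excluded from matchings), which already contradicts $M$ being a perfect matching of $G$. So the degenerate case is handled, just not quite by ``the same parity contradiction.'' The paper's $2$-regular observation sidesteps any such case analysis entirely.
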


\begin{proof} The graph $G' = (V, E\setminus M)$ is a collection of cycles.  If a bridge edge $e$ was not contained in $M$, then $e$ is part of a cycle in $G'$.  Thus, $e$ was part of a cycle in the original graph $G$, which contradicts the fact that bridges are not contained within cycles.
\end{proof}

A similar observation can be made for 2-flip moves of a plane graph of a planar trivalent graph $G$.  

\begin{lemma}
Let $G(V,E)$ be a  planar trivalent graph with a perfect matching $M\subset E$.  If a Jordan curve intersects transversely a plane graph of $G$ in the interiors of two distinct edges each once, then either both edges are in $M$ or neither edges are in $M$. \label{lem:both-or-neither-edge-in-M}
\end{lemma}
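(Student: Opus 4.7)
The plan is to run a parity argument based on the Jordan curve theorem. The Jordan curve $S^1$ separates $S^2$ into two open disks $D_1$ and $D_2$, and since the two intersection points lie in the interiors of edges, every vertex of $\Gamma$ lies in either $D_1$ or $D_2$. Set $V_i := V \cap D_i$ for $i=1,2$. Because each of the two edges $e_1, e_2$ is cut exactly once by the Jordan curve, each must have one endpoint in $V_1$ and one in $V_2$ (otherwise, to return to its starting side, it would need to cross $S^1$ an additional time, contradicting transverse intersection at a single point).

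Next I would compute the total degree of the vertices in $V_1$ in two ways. Since $G$ is $3$-regular, this total is $3|V_1|$. On the other hand, each edge with both endpoints in $V_1$ contributes $2$ to the total, while $e_1$ and $e_2$ each contribute exactly $1$, giving
\[
3|V_1| \;=\; 2\,|E(V_1)| \;+\; 2,
\]
where $E(V_1)$ denotes the set of edges of $G$ whose endpoints both lie in $V_1$. Hence $|V_1|$ is even.

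Now I bring in the perfect matching $M$. Every vertex of $V_1$ is $M$-matched to exactly one other vertex, and that partner is either in $V_1$ or in $V_2$. Let $k$ denote the number of $M$-edges crossing the Jordan curve; since the only edges crossing $S^1$ are $e_1$ and $e_2$, we have $k \in \{0,1,2\}$. Counting vertices of $V_1$ by how they are matched,
\[
|V_1| \;=\; 2\,|M \cap E(V_1)| \;+\; k,
\]
so $k \equiv |V_1| \equiv 0 \pmod 2$. Therefore $k = 0$ or $k = 2$, which is exactly the dichotomy claimed: either neither $e_1$ nor $e_2$ lies in $M$, or both do.

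I do not anticipate a serious obstacle here; the only subtle point is verifying that an edge transversely cut once by $S^1$ must have endpoints in different components of $S^2 \setminus S^1$, which follows from the Jordan curve theorem applied to the edge viewed as a path in $S^2$. Once that is noted, the argument is a clean parity count that leverages both $3$-regularity and the $1$-regularity of the matching subgraph.
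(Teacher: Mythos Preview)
Your proof is correct and takes a genuinely different route from the paper's. The paper argues by contradiction: assuming $e\in M$ and $f\notin M$, it modifies $G$ by replacing the matching edge $e$ with two new edges capped by loops, producing a new trivalent graph $G'$ with perfect matching $M'$ in which $f$ becomes a bridge but still lies outside $M'$; this contradicts the earlier Lemma~\ref{thm:bridge-is-in-perfectmatching} (every bridge lies in any perfect matching). Your argument instead runs a direct parity count: the Jordan curve theorem gives $|V_1|$ even via $3|V_1|=2|E(V_1)|+2$, and then the $1$-regularity of the matching gives $|V_1|=2|M\cap E(V_1)|+k$, forcing $k$ even. The paper's approach has the virtue of reducing to already-established structure (and illustrates the lollipop modification used elsewhere), while yours is more self-contained and elementary, avoiding any auxiliary construction or appeal to the bridge lemma. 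One minor point worth making explicit in your write-up: neither $e_1$ nor $e_2$ can be a loop, since a loop together with its vertex is a closed curve and would meet $S^1$ an even number of times---this guarantees each crossing edge really does have one endpoint in each $V_i$.
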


\begin{proof}
Let $\Gamma$ be a plane graph of $G$ and suppose that a Jordan curve $S\subset S^2$ intersects $\Gamma$ along two distinct edges $e, f \in E$, each in an interior point of each edge.  By way of contradiction, assume that $e \in M$ and $f\not\in M$.  We can modify $G$ to get a new graph $G'$ as follows: Remove edge $e$ and replace it with two new edges $e', e''$ and two loops:

\begin{center}
\includegraphics[scale = .7]{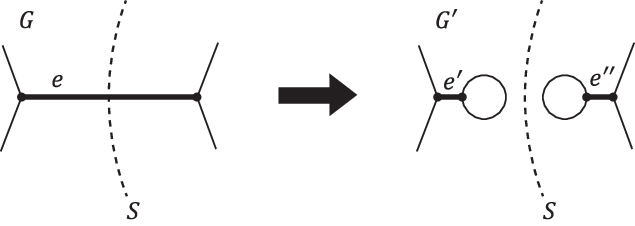} 
\label{fig:IsoPerfectMatchings}
\end{center}

We get a perfect matching $M'$ for $G'$ by removing $e$ from $M$ and including $e'$ and $e''$ in $M'$.  This new perfect matching $M'\subset E'$ is the same as before outside of the edge $e$.  In particular, $f\not\in M'$.  However, $f$ is now a bridge in $G'$.  This contradicts Lemma~\ref{thm:bridge-is-in-perfectmatching}.
\end{proof}
 
These two lemmas show that  flip moves interact with perfect matchings  in controlled ways:

\begin{analysis}
\label{analysis:flip_moves}
Let $G$ be a planar trivalent graph with a perfect matching $M$.  Let $\Gamma$ be a plane graph of $G$ and $S$ be a Jordan curve in $S^2$ that intersects $\Gamma$ in zero, one, or two points along the interiors of edges in $\Gamma$.  Let $D$ be one of the two disks in $S^2$ with boundary $S$ that has been chosen to perform a flip move.  The following are all the possible ways flip moves interact with $\Gamma$ and the perfect matching $M$:
\end{analysis}

\begin{enumerate}
\item A $0$-flip move can occur when the flipping disk $D$ contains zero, one or more entire components of $\Gamma$.  The result of the flip simply reflects these components of $\Gamma$.

\item A $1$-flip move can occur when $S$ intersects the interior of an edge $e$ of $\Gamma$ once.  In this case,  $e\in M$.

\item A $2$-flip move can occur when $S$ intersects $\Gamma$  in the following cases:

\begin{enumerate}
\item One edge case.  The curve $S$ intersects one edge $e$ of $\Gamma$ in two interior points.  The edge $e$  may or may not be in the perfect matching.  The result of the $2$-flip reflects zero, one or more components of $\Gamma$ to the adjacent face of $e$ in $\Gamma$.

\item Two  disconnected edges case. The curve $S$ intersects the interiors of two distinct edges $e,f \in \Gamma$ such that there is no path of edges in $D$ from the vertex of edge $e$ in $D$ to the vertex of edge $f$ in $D$.  Both edges in this case are bridges and $e,f\in M$.  Hence, this type of flip is equivalent to doing two separate $1$-flips.  

\item Two  connected edges case.  The curve $S$ intersects the interior of two distinct edges $e,f$ of $\Gamma$ such that there is a path of edges in $D$ from the vertex of edge $e$ in $D$ to the vertex of edge $f$ in $D$.  Either (1) both $e$ and $f$ are in $M$ or (2) neither edges are in $M$. 
\end{enumerate}

\end{enumerate}

The  above analysis establishes the different cases that will need to be addressed to prove invariance when the perfect matching set is paired with a plane graph.  In this paper, perfect matching edges will be denoted in graphs by `thickened' edges.   Thus, we can keep track of the pair $(\Gamma, M)$ in drawings of plane graphs while performing an isotopy or a flip move.

\begin{definition}
Let $(G,M)$ be a  planar trivalent graph and perfect matching pair.  A {\em perfect matching graph} (or {\em perfect matching drawing}) is a plane graph $\Gamma$ of $G$ together with the perfect matching $M$, i.e., the pair $(\Gamma, M)$.  A perfect matching graph is denoted by $\Gamma_M$ or simply $\Gamma$ when the context is clear.   \label{def:perfect-matching-graph}
\end{definition}

Theorem~\ref{thm:graph-iso} can now be extended to perfect matching graphs using Analysis~\ref{analysis:flip_moves}:

\begin{theorem}
Let $(G,M)$ be a planar trivalent graph $G$ with a perfect matching $M$.  Any two perfect matching graphs of the pair $(G,M)$ are related by a sequence of flip moves described in (1), (2), and (3c) of Analysis~\ref{analysis:flip_moves} and local isotopies.
\label{graph-iso}\label{thm:perfect-matching-iso}
\end{theorem}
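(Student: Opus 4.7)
The plan is to bootstrap from Theorem~\ref{thm:graph-iso} by showing that every flip move and local isotopy in a sequence relating two plane drawings of $G$ can be upgraded to a move on perfect matching drawings that carries the tick marks of $M$ along with the underlying edges. Concretely, let $\Gamma$ and $\Gamma'$ be two perfect matching drawings of $(G,M)$, and let $\overline{\Gamma}$ and $\overline{\Gamma'}$ denote the underlying plane drawings obtained by forgetting the tick marks. By Theorem~\ref{thm:graph-iso} there exists a finite sequence
$$\overline{\Gamma} = \overline{\Gamma}_0 \to \overline{\Gamma}_1 \to \cdots \to \overline{\Gamma}_n = \overline{\Gamma'}$$
of flip moves and local isotopies of unmarked plane drawings.

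First I would lift each intermediate drawing $\overline{\Gamma}_k$ to a perfect matching drawing $\Gamma_k$ by decorating it with tick marks on precisely the edges of $G$ that lie in $M$. The key observation is that every flip move, as defined in Section~\ref{subsection:relationship-between-plane-drawings}, is a cut-and-reglue operation along a Jordan curve $S$ that fixes $S \cap \Gamma_k$ pointwise and preserves the combinatorial structure of $G$; in particular, each edge of $\Gamma_k$ is taken to the corresponding edge of $\Gamma_{k+1}$, and each vertex to the corresponding vertex. Thus I can choose the tick marks to be placed in the interior of each perfect matching edge but away from $S \cap \Gamma_k$ (this costs at most a local isotopy), after which the flip move transports each tick mark with its edge, yielding a legitimate flip move $\Gamma_k \to \Gamma_{k+1}$ between perfect matching drawings. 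Local isotopies behave similarly and trivially, since they simply slide edges (and their tick marks) within $S^2$.

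After applying all $n$ moves, the resulting perfect matching drawing $\Gamma_n$ has underlying plane drawing $\overline{\Gamma'}$ and has its tick marks on exactly the edges of $M$, which are precisely the edges of $\Gamma'$ that carry tick marks. The tick marks on $\Gamma_n$ and $\Gamma'$ may not sit at identical points on each matching edge, but since each pair of tick marks lies in the interior of the same edge of the same plane drawing $\overline{\Gamma'}$, a local isotopy supported in a small neighborhood of each perfect matching edge slides the tick marks of $\Gamma_n$ onto those of $\Gamma'$. Concatenating this final isotopy yields a sequence of flip moves and local isotopies from $\Gamma$ to $\Gamma'$, as required.

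The only step that requires care is the lifting of 1-flip and 2-flip moves whose Jordan curve $S$ crosses an edge of $M$: one must verify that the tick mark on such an edge can be placed away from $S$ and then transported correctly. This is where Analysis~\ref{analysis:flip_moves} is used implicitly, since it guarantees that the edges crossed by $S$ either all belong to $M$ or all do not, so the after-flip decoration is unambiguous and matches $M$ in $\Gamma_{k+1}$. I expect this bookkeeping to be the main obstacle, but given Lemmas~\ref{thm:bridge-is-in-perfectmatching} and~\ref{lem:both-or-neither-edge-in-M} it is essentially a matter of carefully reading off how edges transform under each flip type.
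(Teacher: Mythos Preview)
Your proposal is correct and matches the paper's (implicit) approach: the paper states the theorem immediately after setting up perfect matching drawings and Analysis~\ref{analysis:flip_moves}, treating it as a direct extension of Theorem~\ref{thm:graph-iso} without writing out a separate proof. Your lifting argument---forget the marks, apply Theorem~\ref{thm:graph-iso}, then transport the marks along the edge-preserving flip moves---is exactly the intended reasoning.

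One small comment: your final paragraph over-complicates matters. The invocation of Analysis~\ref{analysis:flip_moves} and Lemmas~\ref{thm:bridge-is-in-perfectmatching} and~\ref{lem:both-or-neither-edge-in-M} is not needed here. A flip move is an edge-preserving operation on the abstract graph regardless of whether the edges crossed by the Jordan curve lie in $M$; the tick mark on a perfect-matching edge is carried to the same edge in the new drawing whether or not that edge is cut by $S$. Those lemmas are important later, for proving invariance of the polynomial and the cohomology under flip moves, but the present theorem only requires the observation that flip moves respect the vertex and edge sets of $G$.
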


\section{Polynomial invariants of perfect matchings of planar trivalent graphs}
\label{section:polynomial_invariants}

In this section we prove that the $2$-factor polynomial and four color polynomial of a planar trivalent graph $G$ with perfect matching $M$ are  invariants of the pair $(G,M)$.  This is done by proving that there is a family of such invariants of which the $2$-factor polynomial and the four-color polynomial are examples, i.e., we prove Theorem~\ref{theorem:general_poly_invariants} in the introduction:

\begin{theorem}
Let $G$ be a planar trivalent graph with perfect matching $M$.  If $\Gamma_M$ is a perfect matching graph for the pair $(G,M)$, generate an element  $\langle \Gamma_M \rangle \in \BZ[A,B,C]$ characterized by:
\begin{eqnarray}
\bigg\langle \PMEdgeDiag \bigg\rangle &=&  A \bigg\langle \IIDiag \bigg\rangle \ + \ B \bigg\langle \XDiag \bigg\rangle \label{eq:generalformula}\\ 
\bigg\langle \bigcirc  \bigg\rangle & = & C  \label{eq:immersed_circle_general}\\\
\bigg\langle \Gamma_1 \sqcup \Gamma_2 \bigg\rangle&=& \bigg\langle \Gamma_1 \bigg\rangle \cdot \bigg\langle \Gamma_2 \bigg\rangle\label{eq:disjoint_graphs_identity_general}
\end{eqnarray}
Then $\langle \Gamma_M \rangle$  depends only upon the pair $(G,M)$ and not the perfect matching graph used to define it.  
\label{theorem:general_poly_invariants_general}
\end{theorem}

With this theorem as backdrop, the following definitions can be made:

\begin{definition} Let $(G,M)$ be a planar trivalent graph $G$ with perfect matching $M$.  The {\em generalized invariant}, $\mathcal{G}(G,M)$,  is the element in $\BZ[A,B,C]$  such that  $\mathcal{G}(G,M) = \langle \Gamma_M \rangle$ for any choice of perfect matching graph $\Gamma_M$ of $(G,M)$.  
\end{definition}

This definition can be generalized if desired. If $\mathcal{A}$ is an associative algebra over a unital ring and there is an algebra map $\BZ[A,B,C] \rightarrow \mathcal{A}$, then $\mathcal{G}(G,M)$ can be thought of as an element of $\mathcal{A}$. Hence it is possible to think of this invariant in terms of matrices instead of polynomials, for example.

The important polynomial of this paper is:

\begin{definition} Let $(G,M)$ be a planar trivalent graph $G$ with perfect matching $M$.  The {\em $2$-factor polynomial}, $\tfp{G}{M}$, is the Laurent polynomial invariant given by  $\langle \PMEdgeDiag \rangle = \langle \IIDiag  \rangle  - q \langle \XDiag \rangle$ and $\langle \bigcirc \rangle = q^{-1} +q$. 
\end{definition}

For a planar trivalent graph $G$, the {\em Penrose Formula}, $[G]$, is the number found by applying the generalized bracket using $\langle \PMEdgeDiag \rangle = \langle \IIDiag  \rangle  - \langle \XDiag \rangle$ and $\langle \bigcirc \rangle = 3$ to any perfect matching graph of $G$ \cite{Kauffman2}. This number counts the number of Tait colorings of the graph $G$ \cite{Penrose}.  As mentioned in the introduction,  for many years mathematicians have looked for a way to turn the Penrose Formula  into a bracket polynomial.  In fact, Kauffman was studying the Penrose Formula when he discovered the Kauffman bracket in knot theory.  The key to getting an invariant polynomial is to choose a perfect matching.  For example,
 
\begin{definition} 
Let $(G,M)$ be a planar trivalent graph $G$ with perfect matching $M$.  The {\em four color polynomial}, $P_4(G,M)$, is an invariant of the pair $(G,M)$ given by $\langle \PMEdgeDiag \rangle = \langle \IIDiag  \rangle - q \langle \XDiag \rangle$ and $\langle \bigcirc \rangle = q^{-1} +1+q$.\label{defn:four-color-poly}
\end{definition}

By definition, $P_4(G,M)(1) = [G]$ for all perfect matchings $M$ of $G$, and therefore $4\cdot P_4(G,M)(1)$ counts the total number of 4-face colorings of $G$ (cf. \cite{Tait}). Hence its name. While beyond the scope of this paper, an interesting question is whether there is an invariant cohomology theory whose $q$-graded Euler characteristic is $P_4(G,M)(q)$.

\subsection{Proof of Theorem~\ref{theorem:general_poly_invariants_general}}

To prove  Theorem~\ref{theorem:general_poly_invariants_general}, the generalized bracket must be shown to be  invariant under flip moves on perfect matching graphs.  To set up the proof, make the following  choices and definitions. If $G$ is a planar trivalent graph, then by Euler's theorem, the number of vertices is even, say $2n$ for some $n\in \BZ$.  The number of perfect matchings of $M$ is then $n$.  Order and label these edges  by $M=\{e_1, e_2, \dots, e_n\}.$ 
If $\Gamma_M$ is a perfect matching graph for $(G,M)$, resolve each perfect matching edge $e_i$ in one of two possible ways according to the pictures shown in Equation~\ref{eq:generalformula}.  That is, replace a neighborhood of each perfect matching edge $e_i$ in $\Gamma_M \subset S^2$ with $\IIDiag$ or $\XDiag$. The resulting set of (possibly immersed) circles in $S^2$ is called a {\em state} of $\Gamma_M$.

There are $2^n$ number of states of $\Gamma_M$, each of which can be indexed by an $n$-tuple of $0$'s and $1$'s.  To describe this correspondence, call the local picture $\IIDiag$ of the state a {\em $0$-smoothing} and the local picture $\XDiag$ a {\em $1$-smoothing}.  For  $\alpha= (\alpha_1, \dots,\alpha_n)$ in $\{0,1\}^n$, let $\Gamma_\alpha$ denote the state where each perfect matching edge $e_i$ has been resolved by an $\alpha_i$-smoothing. We will often refer to the state $\Gamma_\alpha$ simply by $\alpha$ via this correspondence.

The bracket $\langle \Gamma_M \rangle$ can be expressed as follows:  Define $|\alpha|=\alpha_1+\cdots + \alpha_n$ to be the number of $1$'s in $\alpha$. Also, define a function $k$ from the set of states to the nonnegative integers that counts the number of circles in a state.  For notational convenience, let $k_\alpha$ to be the number of (immersed) circles in $\Gamma_\alpha$, i.e., $k(\Gamma_\alpha)=k_\alpha$.  Then, for a perfect matching graph $\Gamma_M$ of a nonempty planar trivalent graph with perfect matching,

\begin{equation}
\langle \Gamma_M \rangle = \sum_{\alpha\in\{0,1\}^n} A^{n-|\alpha|} B^{|\alpha|} C^{k_\alpha}, \label{eqn:general_state-sum}
\end{equation}
and $\langle \emptyset \rangle = 1$.   Note: If $G$ is a vertexless graph, i.e., a set of $k$ circles, then the perfect matching of $G$ is the empty set, and for a perfect matching graph $\Gamma_\emptyset$ of $G$, $\langle \Gamma_\emptyset \rangle = C^{k}$. This expression of the  generalized bracket is called a {\em state sum}.  

\begin{remark}
In a nonempty graph each state  has at least one immersed circle. Hence, like in the definition of the normalized Jones polynomial,  the generalized bracket can be normalized if needed by ``dividing by'' $C$, i.e., take $k_\alpha -1$ for the exponent of $C$ in the Equation~\ref{eqn:general_state-sum}. We prefer not to do this in this paper because of the important relationship between the $2$-factor polynomial and the number of Tait colorings (see Theorem~\ref{thm:existence_of_even_perfect_matching}).
\end{remark}

\begin{remark}
Since $A$, $B$, and $C$ commute with each other, Equation~\ref{eqn:general_state-sum} shows that the state sum (and hence the generalized bracket) is independent of the choice of ordering of edges in $M$.
\end{remark}

\begin{proof}[Proof of Theorem~\ref{theorem:general_poly_invariants_general}] We need to show that the generalized bracket is invariant under the different flip moves and local isotopy.  By local isotopy we mean an ambient isotopy of the perfect matching picture that moves $\Gamma$ around in $S^2$ without creating any crossings.  Hence, the generalized bracket is invariant under local isotopy. The generalized bracket is also  invariant under a $0$-flip move by Equation~\ref{eq:disjoint_graphs_identity_general}  and the fact that reflecting a connected perfect matching picture will not change the number of circles in each $\Gamma_\alpha$.  The invariance of the $1$-flip move is proven in Corollary~\ref{cor:1-flip-poly-invariant} below and the invariance of the $2$-flip move is proven in Corollary~\ref{cor:2-flip-poly-invariant}.
\end{proof}

To prove invariance under a 1-flip move (Corollary~\ref{cor:1-flip-poly-invariant} below), the following proposition is needed.  It says more than what is needed for the corollary: It also gives a condition for when a bridge exists.

\begin{proposition}
Let the pair $(G,M)$ be a planar trivalent graph $G$ and a perfect matching $M$ of $G$. Let $\Gamma_M$ be a perfect matching graph of $(G,M)$ with states $\Gamma_\alpha$ for each $\alpha \in\{0,1\}^n$.  The edge $e_i \in M$ is a bridge if and only if the following statement is true: for all $\alpha, \alpha' \in \{0,1\}^n$ such that $\alpha_k=\alpha'_k$ for all $k$ except $k=i$, and $\alpha_i=|\alpha'_i-1|$, then $k_{\alpha}=k_{\alpha'}$.\label{lemma:k_alpha_is_k_alpha_prime}
\end{proposition}

This proposition says that of $e_i$ is a bridge, then the number of circles of the state where $e_i$ has a $0$-smoothing is equal to the number of circles of the state where $e_i$ has a $1$-smoothing when all of the other smoothings are the same. A graphical way to express Proposition~\ref{lemma:k_alpha_is_k_alpha_prime} for the edge $e_i$ is as follows:

\begin{eqnarray}
e_i \ \mbox{is a bridge} \Rightarrow \
\bigg\langle \raisebox{-0.40\height}{\includegraphics[scale=0.15]{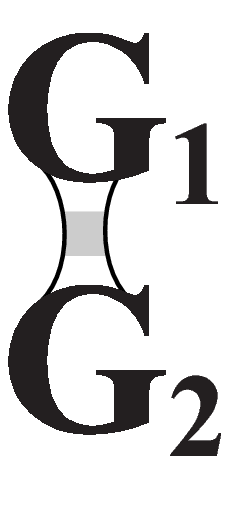}} \bigg\rangle = \bigg\langle \raisebox{-0.40\height}{\includegraphics[scale=0.15]{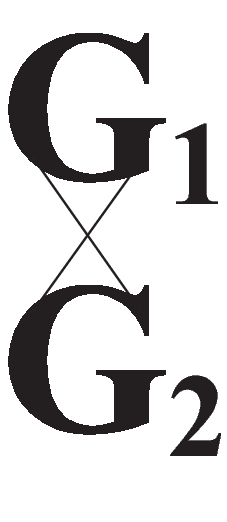}} \bigg\rangle \label{eq:PM_Edge_Identity} 
\end{eqnarray}

\begin{proof}[Proof of Proposition~\ref{lemma:k_alpha_is_k_alpha_prime}]
Assume $e_i$ is a bridge (which implies by Lemma~\ref{thm:bridge-is-in-perfectmatching} that $e_i\in M$).  Since $\alpha_k = \alpha'_k$ for all $k\not=i$, the circles of $\Gamma_{\alpha}$ that are not part of the $\alpha_i$-smoothing are exactly the same as the circles of $\Gamma_{\alpha'}$ that are not part of the $\alpha'_i$-smoothing.  Therefore we only have to inspect the circles that are part of the $\alpha_i$- and $\alpha'_i$-smoothings.  Clearly, there can be at most two circles that are part of the $\alpha_i$-smoothing (one for each arc in the $\alpha_i$-smoothing).  We claim that there is only one circle:  Since $e_i$ is a bridge between two parts of the graph, any circle that enters along one of the two arcs of the $\alpha_i$-smoothing would then have to exit along the other arc.  By the same argument, there is only one circle that is part of the $\alpha'_i$-smoothing as well.  Thus, $k_{\alpha}=k_{\alpha'}$.

Now suppose the statement is true for an edge $e_i \in M$, i.e., $k_\alpha=k_{\alpha'}$ for $\alpha$ and $\alpha'$ that differ only in that for  $e_i$,  $\alpha_i=|\alpha'_i-1|$.  By way of contradiction, suppose that there are two circles that are part of the $\alpha_i$-smoothing, $c_1$ and $c_2$.  Since $k_\alpha=k_{\alpha'}$ there are two circles $c'_1$ and $c'_2$ that are part of the $\alpha'_i$-smoothing as well. Without loss of generality, assume that $\alpha_i$ is a $0$-smoothing.  Since $c_1$ and $c_2$ are immersed  curves that only intersect themselves and each other in double points, $c_1$ must intersect $c_2$ in an even number of points.  The state $\Gamma_{\alpha'}$ is the same as $\Gamma_\alpha$ except the $\IIDiag$ at $e_i$ has been replaced with $\XDiag$.  This replacement introduces another intersection point between the two circles, and therefore the circles of $c'_1$ and $c'_2$ of $\Gamma_{\alpha'}$ intersect in an odd number of points---a contradiction.    Therefore, there can only be one circle that is part of the $\alpha_i$-smoothing, and one circle that is part of the $\alpha'_i$-smoothing.  That is, for all $\Gamma_\alpha$, any circle that enters along one of the two arcs of the $\alpha_i$-smoothing would have to exit along the other arc (and similarly for $\Gamma_{\alpha'}$).  This fact implies that $e_i$ is not an edge in any cycle of $\Gamma$.  Therefore $e_i$ is a bridge.
\end{proof}

The forward implication of Proposition~\ref{lemma:k_alpha_is_k_alpha_prime} implies:

\begin{corollary}
The generalized bracket is invariant under $1$-flip moves. \label{cor:1-flip-poly-invariant} 
\end{corollary}

The bracket $\langle \Gamma \rangle$ is also invariant of $2$-flip moves. This amounts to proving Case 3(c) of Analysis~\ref{analysis:flip_moves}.

\begin{proposition}
Let the pair $(G,M)$ be a planar trivalent graph $G$ and a perfect matching $M$ of $G$. Let $\Gamma$ be a perfect matching graph of $(G,M)$ with states $\Gamma_\alpha$ for each $\alpha \in\{0,1\}^n$.  Let $e_i, e_j \in M$ such that $i\not=j$ (two distinct edges).  Let $D \subset S^2$ be a disk whose boundary intersects $\Gamma$ transversely in the interiors of $e_i$ and $e_j$ each once.  If $\alpha, \alpha' \in \{0,1\}^n$ are two states such that $\alpha_k=\alpha'_k$ for all $k$ except $i$ or $j$, and
\begin{enumerate}
\item $\alpha_i=\alpha_j=|\alpha'_i-1|=|\alpha'_j-1|$, or\\
\item $\alpha_i=|\alpha_j-1|=|\alpha'_i-1|=\alpha'_j$,
\end{enumerate} 
then $k_\alpha=k_{\alpha'}$.
\label{lemma:k_alpha_is_k_alpha_prime-cut-edge-set-case}
\end{proposition}

This proposition can be stated graphically  as:

\begin{eqnarray}
\raisebox{-0.40\height}{\includegraphics[scale=0.15]{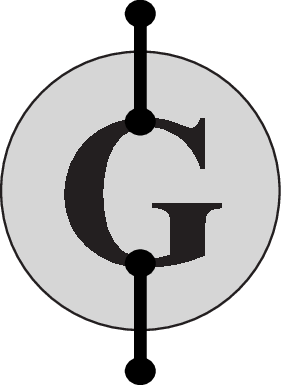}} \ \ \ \Rightarrow \ \ \
\bigg\langle \raisebox{-0.40\height}{\includegraphics[scale=0.15]{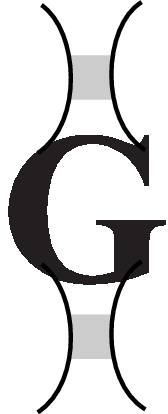}} \bigg\rangle = \bigg\langle \raisebox{-0.40\height}{\includegraphics[scale=0.15]{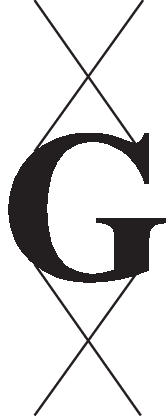}} \bigg\rangle \ \ \mbox{and} \ \ 
\bigg\langle \raisebox{-0.40\height}{\includegraphics[scale=0.15]{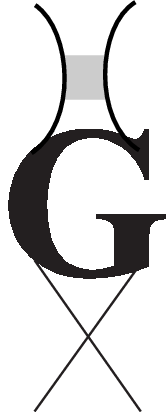}} \bigg\rangle = \bigg\langle \raisebox{-0.40\height}{\includegraphics[scale=0.15]{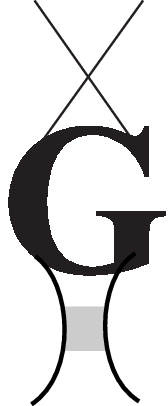}} \bigg\rangle 
\label{eq:PM_2-Edge_Identity} 
\end{eqnarray}

\begin{proof}
We address the first case when $\alpha_i=\alpha_j=|\alpha'_i-1|=|\alpha'_j-1|$, the second case is similar.  Since both $\Gamma_\alpha$ and $\Gamma_{\alpha'}$ are the same outside of the $\alpha_i$- and $\alpha_j$-smoothings, we need only show that the count of circles that enter the $\alpha_i$-smoothing and exit either the $\alpha_i$- or $\alpha_j$-smoothings  are the same as the count of circles that enter the $\alpha'_i$ smoothing and exit either the $\alpha'_i$- or $\alpha'_j$-smoothings.  Without loss of generality, we may assume that $\alpha_i=\alpha_j=0$ and $\alpha'_i=\alpha'_j=1$.  Consider the following schematic of this situation:

\begin{figure}[H]
\psfragscanon
\psfrag{a}{$a$}\psfrag{A}{$a'$}
\psfrag{b}{$b$}\psfrag{B}{$b'$}
\psfrag{c}{$c$}\psfrag{C}{$c'$}
\psfrag{d}{$d$}\psfrag{D}{$d'$}
\psfrag{g}{$\Gamma_\alpha$}
\psfrag{f}{$\Gamma_{\alpha'}$}
\includegraphics[scale = 0.25]{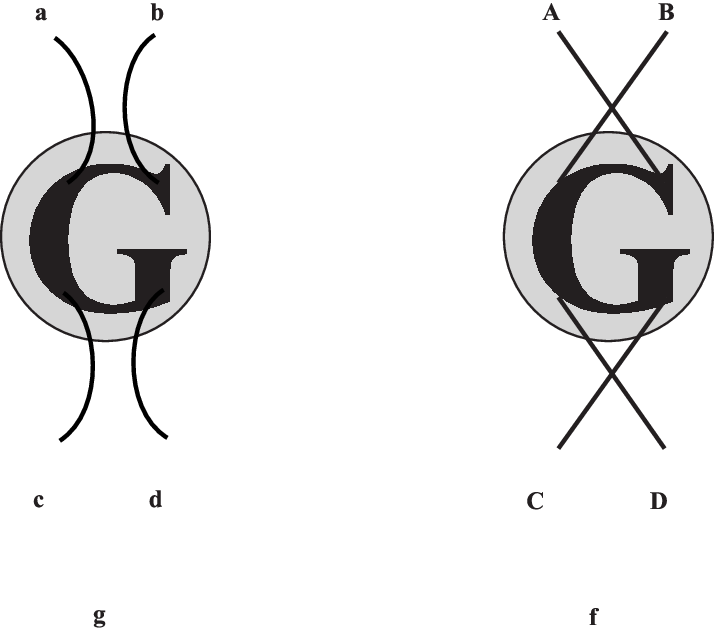}
\label{fig:2-flip-proof-step_2}
\end{figure}

\noindent In $\Gamma_\alpha$, if we travel along a circle and enter the flipping disk along arc $a$, then the first arc we transverse as we exit the flipping disk must be either $b$, $c$, or $d$. Note that it is  possible to continue traveling the circle after exiting and then reenter the flipping disk again.  For example, if we enter along arc $a$ and exit along arc $c$, then it is possible to reenter along arc $b$, but this time we must leave along arc $d$.

Step through each of the cases above separately.  If we enter along arc $a$ and the first arc we exit along is $b$, then the same will happen in $\Gamma_{\alpha'}$, that is, if we enter along arc $a'$ we will have to exit along arc $b'$.  The only difference will be that we will transverse the same part of the circle in the opposite direction while in the disk.  Note that if we enter along arc $c$ on a (possibly new) circle then we must exit along $d$, and the same thing will occur in $\Gamma_{\alpha'}$.

If we enter along arc $a$ and exit along arc $c$, then entering along arc $b$ on a (possibly different) circle would force us to exit along arc $d$ on that circle.  Looking at the diagram in $\Gamma_{\alpha'}$, if we enter along arc $a'$, we will transverse the part of the circle in the disk that took us from arc $b$ to arc $d$ in $\Gamma_\alpha$, but the results is the same as before: we will leave along arc $c'$.  Similarly, enter along arc $b'$ will mean we exit along arc $d'$.

The last case is entering along arc $a$ and exiting along arc $d$.  The same analysis as the previous case applies to this one.

Thus, if there is one circle in $\Gamma_\alpha$ that enters and exits the disk twice, then there will be one circle in $\Gamma_{\alpha'}$ that enters and exists the disk twice, and vice-versa.  If there are two circles in $\Gamma_\alpha$ that each enter and exit the disk once, then there will be two circles in $\Gamma_{\alpha'}$ that enter and exit the disk once, and vice-versa.  In either situation, $k_\alpha=k_{\alpha'}$.
\end{proof}

Equation~\ref{eq:PM_2-Edge_Identity} can be used to show:

\begin{corollary}
The generalized polynomial is invariant under $2$-flip moves.  \label{cor:2-flip-poly-invariant} 
\end{corollary}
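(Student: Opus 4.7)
The plan is to first dispense with the simpler sub-cases of Analysis~\ref{analysis:flip_moves}(3) and then attack the essential sub-case 3(c) where both boundary-crossing edges $e_i, e_j$ lie in $M$. As noted in the paragraph preceding Lemma~\ref{lemma:k_alpha_is_k_alpha_prime-cut-edge-set-case}, case 3(a) behaves like a $0$-flip (the bracket ignores the content inside $D$ or merely reflects it, and an orientation-reversing homeomorphism of a disk preserves the number of connected components of any immersed curve diagram), and case 3(b) is the composition of two $1$-flips and is handled by Corollary~\ref{cor:1-flip-poly-invariant}. The sub-case of 3(c) where neither $e_i$ nor $e_j$ lies in $M$ is similar to 3(a): for each state $\alpha\in\{0,1\}^n$, the smoothings at matching edges inside $D$ have both endpoints inside $D$ and are unaffected by the reflection on cyclic order, so $k_\alpha(\Gamma) = k_\alpha(\Gamma')$.

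For the essential sub-case 3(c) with $e_i, e_j \in M$, first I would expand the bracket at both edges using Equation~\ref{eq:generalformula}:
\[
\langle\Gamma\rangle_{\!\mathcal{G}} = A^2\langle\Gamma^{00}\rangle_{\!\mathcal{G}} + AB\langle\Gamma^{01}\rangle_{\!\mathcal{G}} + AB\langle\Gamma^{10}\rangle_{\!\mathcal{G}} + B^2\langle\Gamma^{11}\rangle_{\!\mathcal{G}},
\]
where $\Gamma^{st}$ denotes the partially-resolved diagram with $e_i$ smoothed as $s\in\{0,1\}$ and $e_j$ as $t\in\{0,1\}$, and the analogous expansion holds for $\langle\Gamma'\rangle_{\!\mathcal{G}}$.

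The heart of the argument is a geometric observation: the $2$-flip reflection of $D$ swaps the $0$- and $1$-smoothings at \emph{both} edges $e_i$ and $e_j$. This is because the reflection fixes the intersection points $p\in e_i\cap\partial D$ and $q\in e_j\cap\partial D$ but reverses the cyclic order of edges at each interior vertex (the one inside $D$), while the cyclic order at the outer vertex (outside $D$) is unchanged. Consequently, the ``parallel'' matching $\IIDiag$ in $\Gamma'$, which is determined by $\Gamma'$'s plane embedding, corresponds topologically to the ``crossed'' matching $\XDiag$ in $\Gamma$, and vice versa. Since the reflection preserves connected components of the resulting immersed diagram, we obtain
\[
\langle(\Gamma')^{st}\rangle_{\!\mathcal{G}} = \langle\Gamma^{(1-s)(1-t)}\rangle_{\!\mathcal{G}} \quad \text{for all } (s,t) \in \{0,1\}^2.
\]

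Substituting into the expansion of $\langle\Gamma'\rangle_{\!\mathcal{G}}$ and applying Lemma~\ref{lemma:k_alpha_is_k_alpha_prime-cut-edge-set-case} (which yields $\langle\Gamma^{00}\rangle_{\!\mathcal{G}} = \langle\Gamma^{11}\rangle_{\!\mathcal{G}}$ and $\langle\Gamma^{01}\rangle_{\!\mathcal{G}} = \langle\Gamma^{10}\rangle_{\!\mathcal{G}}$) gives
\[
\langle\Gamma'\rangle_{\!\mathcal{G}} = A^2\langle\Gamma^{11}\rangle_{\!\mathcal{G}} + AB\langle\Gamma^{10}\rangle_{\!\mathcal{G}} + AB\langle\Gamma^{01}\rangle_{\!\mathcal{G}} + B^2\langle\Gamma^{00}\rangle_{\!\mathcal{G}} = \langle\Gamma\rangle_{\!\mathcal{G}},
\]
completing the argument. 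The hard part will be the careful local verification of the key geometric observation that the $2$-flip exchanges $\IIDiag$ and $\XDiag$ at the two boundary-crossing matching edges; this requires tracking how the orientation-reversing homeomorphism acts on the cyclic orders of edges at each interior vertex of $e_i$ and $e_j$, and confirming that the asymmetry between ``inside'' and ``outside'' of $D$ forces the $\IIDiag$-smoothing in $\Gamma'$ to be topologically the $\XDiag$-smoothing in $\Gamma$ at both boundary-crossing edges simultaneously.
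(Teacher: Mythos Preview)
Your proof is correct and follows essentially the same route as the paper's own proof. You handle cases 3(a), 3(b), and the ``neither edge in $M$'' sub-case of 3(c) identically to the paper, and for the essential sub-case (both $e_i, e_j \in M$) you expand the bracket at both edges, invoke Lemma~\ref{lemma:k_alpha_is_k_alpha_prime-cut-edge-set-case}, and use the geometric fact that the reflection of $D$ interchanges the $0$- and $1$-smoothings at the two boundary-crossing matching edges---exactly the step the paper carries out pictorially in its chain of equalities (the passage from the second to the third displayed line). The only difference is presentational: the paper encodes the ``smoothing-swap'' observation in its figures, while you spell it out in words via cyclic orders at the interior vertex; both arguments have the same logical skeleton.
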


\subsection{The 2-factor polynomial}
\label{subsec:the-2-factor-poly}

Because the $2$-factor polynomial is new and because it is an important aspect of Theorem~\ref{thm:main-theorem}, it is illuminating to see what type of combinatorial information it generates about a planar graph.  We start by computing the $2$-factor polynomial of a couple of small graphs to investigate its usefulness. For example, the $2$-factor polynomial of the theta graph $\theta$ is:

\begin{eqnarray*}
\Big\langle \raisebox{-0.43\height}{\includegraphics[scale=0.30]{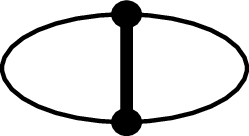}} \Big{\rangle}_{\!\!2} &=& \Big \langle \raisebox{-0.43\height}{\includegraphics[scale=0.20]{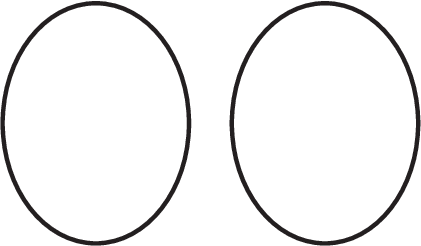}} \Big\rangle_{\!\!2} -q\Big\langle \raisebox{-0.43\height}{\includegraphics[scale=0.20]{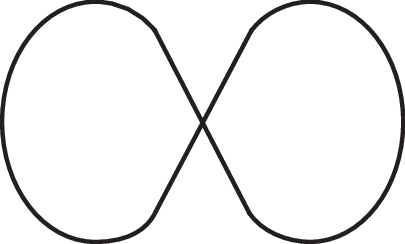}} \Big\rangle_{\!\!2}\\
&=& (q^{-1}+q)^2 - q(q^{-1}+q)\\
&=& q^{-2}+1.
\end{eqnarray*}

Evaluating the $2$-factor polynomial of $\theta$ at $1$ is $2$.  Like the unnormalized Jones polynomial, which evaluated at $1$ is $2^k$ where $k$ is the number of components of the link, something similar appears to be happening here: $G\setminus M$ has one cycle and $\tfp{\theta}{M}(1)=2^1$.  More computations suggest this may be true in general: $$\Big\langle\raisebox{-0.33\height}{\includegraphics[scale=.15]{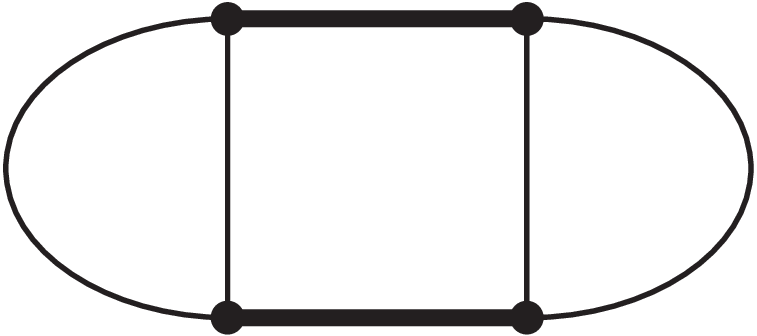}}\Big\rangle_{\!\!2}(q)=q^{-2}+1+q^2+q^4$$ and $\big\langle\raisebox{-0.3\height}{\includegraphics[scale=.1]{theta_2.eps}}\big\rangle_{\!2}(1)=2^2$. However, the $2$-factor polynomial of the dumbbell graph with its one perfect matching edge is $\tfb{{\dumbbell}}(z) =z^{-1}-1+z-z^2$. Evaluating this polynomial at $1$ is $0$. A close inspection of this phenomena in many graphs reveals that the $2$-factor polynomial evaluated at one counts  the number of $2$-factors that span the perfect matching set---hence the reason for the name of the polynomial.   This fact is not obvious nor simple to prove: 

\begin{theorem}[Baldridge--Lowrance--McCarty~\cite{BM}]  Let $G$ be a connected, planar trivalent graph with perfect matching $M$.  Then evaluating the $2$-factor polynomial at $1$ counts the number of $2$-factors that span the perfect matching set, i.e.,
$$\tfp{G}{M}(1) = \mbox{\#\{2-factors that span $M$\}}.$$ \label{conj:f_size_conjecture}
\end{theorem}

\begin{remark} This theorem was originally a conjecture in the first version of this paper. It was proven after the paper was completed but before it was published.  
\end{remark}

The $2$-factor polynomial does more; it detects the existence of a $2$-factor with only even cycles, i.e., cycles with an even number of vertices.  If we remove the perfect matching edges $M$ from the graph $G$, the graph $G(V, E\setminus M)$ is a set of disjoint cycles in the plane.  If $(G,M)$ has a $2$-factor that contains the perfect matching set $M$, then this $2$-factor must intersect each of the cycles in $G(V,E\setminus M)$ in an even number of vertices. Such a $2$-factor exists if and only if all of the cycles of $G(V,E\setminus M)$ have an even number of vertices. A perfect matching $M$ is called {\em even} if all cycles of $G(V,E\setminus M)$ are even.  Theorem~\ref{conj:f_size_conjecture} can be used to show:

\begin{theorem}[Baldridge--Lowrance--McCarty~\cite{BM}]  Let $G$ be a planar trivalent graph with perfect matching $M$.   If any of the cycles in $G(V,E\setminus M)$ have an odd number of edges, then the $2$-factor polynomial  satisfies $$\tfp{G}{M}(1)=0.$$  In particular, if $\tfp{G}{M}(1) > 0$ for the pair $(G,M)$, then $M$ is an even perfect matching of $G$.
\label{thm:existence_of_even_perfect_matching}
\end{theorem}

Even perfect matchings are  important in graph theory:  A planar trivalent graph with an even perfect matching is $3$-edge colorable, which implies that its faces are 4-colorable (cf. \cite{Kauffman}). Hence, a non-computer-aided proof of the four color theorem \cite{AH, RSST} (see also \cite{BN2, Kauffman3}) is equivalent to proving:

\begin{conjecture} Let $G$ be a connected planar trivalent graph.  If $G$ is bridgeless, then there exists a perfect matching $M$ of $G$ such that $\tfp{G}{M}(1)>0$. \label{conj:The_2_factor_poly_conj}
\end{conjecture}

These facts about 3-edge colorings and even perfect matchings of $G$ can be summarized by the following corollary of Theorem~\ref{conj:f_size_conjecture}:

\begin{corollary}
Let $G$ be a planar trivalent graph with perfect matching $M$. Choose three colors $\{red, blue, purple\}$ to color edges of $G$. Then $\tfp{G}{M}(1)$ counts the number of 3-edge colorings of $G$ where all of the perfect matching edges of $G$ are always labeled the same color, say purple. \label{cor:3-edge-color-of-G}
\end{corollary}

The $2$-factor polynomial can be used to (1) distinguish perfect matchings of a graph, (2) count $2$-factors that contain the perfect matching, (3) detect even perfect matchings, and (4) restate the four color theorem.  It is also the $q$-graded Euler characteristic of a Khovanov-like cohomology theory $H^{i,j}(G,M)$ described in the next section. This link blends the strength of homology theories typical of TQFTs together with the combinatorial strength of polynomial invariants. Before introducing the cohomology theory, we show how to use it to distinguish perfect matchings of a graph and how to generalize the $2$-factor polynomial to $n$-regular graphs. 

\subsection{Distinguishing  perfect matchings of a graph} \label{subsection:distinguishing-pm-of-a-graph} Over the past century, graph theory has been concerned with the existence, enumeration, and properties of perfect matchings of graphs (cf. \cite{LP}). For example, the enumeration of perfect matchings of trivalent graphs was recently shown to grow exponentially with the number of vertices in a graph \cite{EKKKN}. Some of these perfect matchings will be equivalent up to flip moves (eg. theta graph has three equivalent perfect matchings). By defining the $2$-factor polynomial, this paper introduces new ways of distinguishing perfect matchings of a graph up to the moves---such inequivalent perfect matchings should be seen as fundamentally different and worthy of study from a graph theory point of view. For example, some questions immediately come to mind: What does the $2$-factor polynomial or homology theory say about nowhere-zero $4$-flows where the perfect matching edges are labeled by $\pm 2$? What can be said about the invariants of this paper for planar bipartite trivalent (bicubic) graphs (cf. \cite{T2})? If two perfect matchings of a graph have isomorphic homologies, are they flip move equivalent? The purpose of this subsection is to start the exploration of questions like these.

One simple way to distinguish two perfect matchings is to count the number of cycles after removing the perfect matching edges from the graph.  For example, removing $M_1$ and $M_2$ from the $\theta_2$ graph below gives one cycle and two cycles respectively:
\begin{center}
\includegraphics[scale = .6]{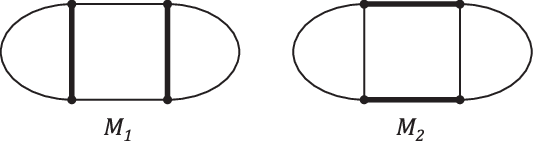} 
\label{fig:NoIsoPerfectMatchings}
\end{center}

\noindent The $2$-factor polynomial can also used to distinguish these perfect matchings: $\tfp{\theta_2}{M_1}(1)=2^1$ and $\tfp{\theta_2}{M_2}(1)=2^2$ (cf. Theorem~\ref{conj:f_size_conjecture} and Corollary~\ref{cor:3-edge-color-of-G}). This leads to the following question: 

\begin{question}
Does there exists a graph $G$ and two perfect matchings $M_1$ and $M_2$ of $G$ such that $\tfp{G}{M_1}(1)=\tfp{G}{M_2}(1)$ but $\tfp{G}{M_1} \not= \tfp{G}{M_2}$?
\end{question}

This question was answered by James Oxley with the following graph and perfect matchings:

\begin{center}
\psfragscanon
\psfrag{m}{$(G,M_1)$}\psfrag{N}{$(G,M_2)$}
\psfrag{b}{$b$}\psfrag{B}{$b'$}
\includegraphics[scale = .22]{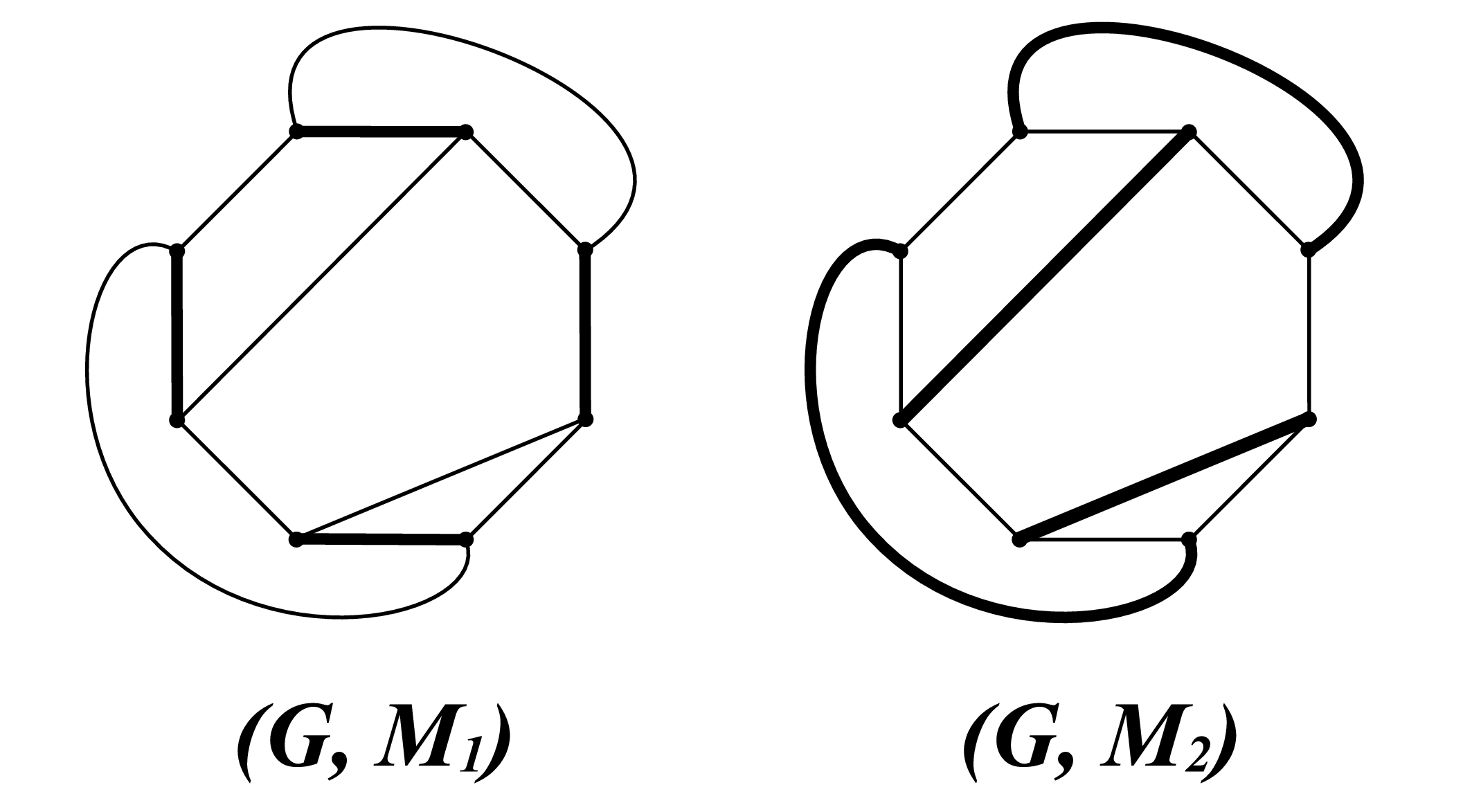} 
\label{fig:Oxley-Example}
\end{center}

A calculation shows that $\tfp{G}{M_1}(q)=q^{-3}-q^{-2}+q^3+q^6$ and $\tfp{G}{M_1}(q)=q^{-1}-1+2q+q^4-q^5$.  Thus the $2$-factor polynomial distinguishes both perfect matchings even though they both have the same value when evaluated at $1$. This example leads one to make a more refined conjecture:

\begin{conjecture} There exists a trivalent planar graph $G$ and two perfect matchings of the graph $M_1$ and $M_2$ such that $\tfp{G}{M_1}= \tfp{G}{M_2}$, but the pairs have different homologies, $H^{*,*}(G,M_1) \not\cong H^{*,*}(G,M_2)$. \label{conj:same-graph-diff-pm}
\end{conjecture}

This conjecture is different to how one thinks in knot theory.  In knot theory, one looks for two knots with the same Jones polynomial but different Khovanov homologies (cf. \cite{BN}).  This would be equivalent to finding {\em different} planar trivalent graphs with perfect matchings that have the same $2$-factor polynomial but different homology theories. Conjecture~\ref{conj:same-graph-diff-pm}, however, is looking for different perfect matchings {\em on the same graph} with this property. The knot theory version is:

\begin{conjecture} There exists two planar trivalent graphs with perfect matchings, $(G_1,M_2)$ and $(G_2,M_2)$,  such that 
$\tfp{G_1}{M_1}= \tfp{G_2}{M_2}$ and different homologies, $H^{*,*}(G_1,M_1) \not\cong H^{*,*}(G_2,M_2)$.
\label{conj:same-2-factor-different-cohomology}
\end{conjecture}

Taking another idea from knot theory, one can define the {\em ungraph} as a single vertex-less circle and ask:

\begin{question}
Are there any planar trivalent graphs with perfect matchings that have the same $2$-factor polynomial or homology theory as the ungraph? \label{quest:ungraph-detector}
\end{question}

The $2$-factor polynomial is more like the Kauffman bracket than the Jones polynomial in the sense that it is not ``normed'' like the Jones polynomial.  Recall that the Jones polynomial is normed by multiplying the Kauffman bracket  by a factor $(-1)^{n_-}q^{(n_+-2n_-)}$ where $n_+$ and $n_-$ are the total number of positive and negative crossings of a knot diagram.  (The $2$-factor can also be normed, but it is not necessary to do so. See \cite{BKR} for how.) This fact means that the $2$-factor polynomial distinguishes graphs that could be the same under a normed version of the invariant. For example, the {\em $m$-theta graph} $\theta_m$ with its natural perfect matching $M_\theta$ given in Figure~\ref{fig:m-theta-graph} has $2$-factor polynomial $ \tfp{\theta_m}{M_{\theta}}(q) =q^{-m}(q^{-1}+q)$.

\begin{figure}[H]
\psfrag{A}{$\ldots m$}\psfrag{B}{$\ldots m$}
\psfrag{C}{$(\theta_m, M_{\theta})$}\psfrag{D}{$(D_m, M_m)$}
\includegraphics[scale=0.25]{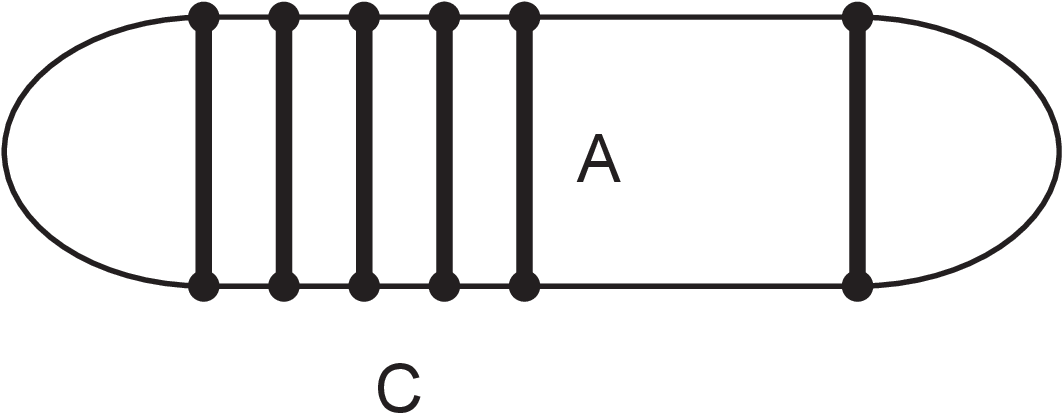}
\caption{The $\theta_m$ graph with perfect matchings $M_\theta$ shown with bold edges. Note that $m=0$ is the ungraph and $m=1$ is the usual theta graph.}
\label{fig:m-theta-graph}
\end{figure}

The normed version of the $2$-factor polynomial would multiply $q^{-m}(q^{-1}+q)$ by $q^{m}$, which would give the same invariant for all $m$-theta graphs.   (See Theorem~\ref{theorem:add-perfect-matching-edge} and the discussion above it to see why the operation of adding a perfect matching edge is like a Reidemeister one move in knot theory.)  Hence, Question~\ref{quest:ungraph-detector} is more like a question about whether the Kauffman bracket detects the framed unknot with the Seifert framing than whether the Jones polynomial detects the unknot (cf. \cite{KM4}).

The conjectures and questions in this section show that similar questions in knot theory take on different significance and meaning when ported over to graph theory.  They also show a set of problems about perfect matchings that have not been considered before in graph theory.  Since the invariants of this paper are providing key information about  cycles in a graph, and cycles are important to  graph theory (cf. the cycle double cover conjecture), these problems and their answers are likely to have consequences to graph theory topics suggested at the beginning of this section. 

The next section shows that the invariants of this paper generalize to  topics in graph theory that are not related to knot theory at all.

\subsection{Generalizing the bracket to $n$-regular graphs} \label{subsection:generalizing-the-bracket}
Theorem~\ref{theorem:general_poly_invariants}  and Theorem~\ref{thm:main-theorem} can be generalized to $n$-regular graphs.  In this subsection, the case for $n=4$ is briefly presented, which should be enough to indicate how to generalize  the theorems to $n$-regular planar graphs when $n>4$.   In particular, this subsection touches on two themes of this paper: (1) to explore other TQFT contexts in which the ideas behind the main theorems apply and (2) expose why the main theorems are broader than a straight translation of the Jones polynomial and Khovanov homology over to graph theory.  

Let $(G,M)$ be a planar $4$-regular graph $G$ with a perfect matching $M$. Let $\Gamma$ be a perfect matching graph of $(G,M)$, i.e., a plane graph of $G$ together with the perfect matching. Define a bracket   $\langle \Gamma \rangle \in \BZ[A,B,C]$ on the perfect matching graph $\Gamma$ characterized by:
\begin{eqnarray}
\bigg\langle \FourRegPMEdge \bigg\rangle &=&  A \left(\bigg\langle \IIIsmoothing \bigg\rangle + \bigg\langle \XXIsmoothing \bigg\rangle + \bigg\langle \IXXsmoothing \bigg\rangle \right) \ + \ B \left( \bigg\langle \XXXsmoothing \bigg\rangle + \bigg\langle \XIsmoothing \bigg\rangle + \bigg\langle \IXsmoothing \bigg\rangle \right)\label{eq:4-regular-generalformula}\\ 
\bigg\langle \bigcirc  \bigg\rangle & = & C  \label{eq:4-regular-immersed_circle_general}\\\
\bigg\langle \Gamma_1 \sqcup \Gamma_2 \bigg\rangle&=& \bigg\langle \Gamma_1 \bigg\rangle \cdot \bigg\langle \Gamma_2 \bigg\rangle\label{eq:4-regular-disjoint_graphs_identity_general}
\end{eqnarray}

This generalized bracket for $4$-regular planar graphs is invariant of flip moves on graphs.  For example, the proof of invariance for flipping a disk that intersects two distinct perfect matching edges of the plane graph each once in their interiors, i.e., a $2$-flip move, is similar to the $2$-flip move proof above for trivalent graphs. While there are no ``$1$-flip'' moves on $4$-regular planar graphs, there are other types of flip moves that need to be considered (cf. \cite{G} or \cite{MT}).  A careful check of these flip moves shows that the generalized bracket is an invariant of the planar graph and perfect matching.

\begin{theorem}
Let $(G,M)$ be a planar $4$-regular graph and $M$ a perfect matching of it.  The bracket $\langle \Gamma \rangle \in \BZ[A,B,C]$ of a perfect matching graph $\Gamma$ only depends on the pair $(G,M)$.\label{theorem:4-reg-invariant}
\end{theorem}

The generalization of this theorem to $n$-regular graphs should hold as well.  This is due to the fact that the even permutations smoothings associated with the $A$ term and the odd permutation smoothings associated with the $B$ term are grouped together, and there is a correspondence between smoothings of $A$ and the smoothings of $B$ under twisting. For example, a  twist of the $\IIIsmoothing$-smoothing of $A$ is the $\XXXsmoothing$-smoothing of $B$.

Just like the $2$-factor polynomial, a $3$-factor polynomial can be defined:

\begin{definition}
Let $(G,M)$ be a planar $4$-regular graph and $M$ be a perfect matching of it.  The {\em $3$-factor polynomial}, $\threefp{G}{M} \in \BZ[q,q^{-1}]$,  is the Laurent polynomial given by using $A=1$, $B=-q$, and $C=(q^{-1}+1+q)$ in Theorem~\ref{theorem:4-reg-invariant}. 
\end{definition}

Compare the definition of $3$-factor polynomial with Penrose's ``generalized Kronecker delta'' on page 227 of \cite{Penrose}.  In that paper, Penrose gave an interpretation for the generalized Kronecker delta in the special case of  two strands with loop value $C=3$  (in his notation: \raisebox{-0.36\height}{\includegraphics{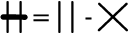}} with dimension $v=3$) in terms of $3$-edge colorings for trivalent planar graphs (the Penrose Formula).  However, he did not give meaning to the generalized Kronecker delta in other cases, including what happens for $n$-regular planar graphs with $n>3$.  In this paper, the $2$-factor polynomial  has already shed new light on the meaning when the loop value is $C=2$ for planar trivalent graphs (cf. Corollary~\ref{cor:3-edge-color-of-G}).  Next, we show how to generalize this idea to the $3$-factor polynomial and $4$-regular planar graphs.

To understand what colorings of planar $4$-regular graphs the $3$-factor polynomial is counting, the notion of when a $k$-factor ``factors through'' an $\ell$-factor  for $0\leq \ell \leq k$ is needed. First, recall the definition of a $k$-factor:  A {\em $k$-factor} of a graph $G$ is a spanning $k$-regular subgraph of $G$.   For example, given an $n$-regular graph, the vertex set $V(G)$ is always a $0$-factor, a perfect matching $M$ with its vertices is $1$-factor, and an $n$-regular graph is an $n$-factor.  (Here and below, when we think of a perfect matching $M$ as a $1$-factor, we include the vertices and edges, i.e., $(V(G), M)$, not just the edge set $M$.  We continue to call this subgraph $M$ when the context is clear.)  The following notation will be used for the set of $k$-factors that span an $\ell$-factor:

\begin{definition}
Given an $n$-regular graph $G$ and an $\ell$-factor $M$ of $G$, then a $k$-factor $K$  {\em factors through} $M$ if $M$ is a subgraph of $K$. Denote the set of all $k$-factors of $G$ that factor through $M$  by $\findex{G}{M}$. For a specific $k$, denote the set of all $k$-factors of $G$ that factor through $M$ by $\findex{G}{M}_k$.  Call the number of elements of $\findex{G}{M}_k$ the {\em index} and denote it by $\fsize{G}{M}_k$. 
\end{definition}

In this definition, the notation for the index, $\fsize{G}{M}_k$, was chosen to remind the reader of the index of a subgroup  in group theory, and in many ways, this is a helpful way to think about the number. In general, for a given $\ell$-factor $M$, $\fsize{G}{M}_k$ can be difficult to calculate. For example, $\fsize{G}{V(G)}_1$ counts the number of perfect matchings of $G$ and $\fsize{G}{V(G)}_2$ counts the number of $2$-factors of $G$.  The index is a useful notion.  For instance, the conclusion of Theorem~\ref{conj:f_size_conjecture} becomes $\tfp{G}{M}(1)=\fsize{G}{M}_2$.

The index can be used to describe what the $3$-factor polynomial counts: The $3$-factor polynomial, evaluated at one, is the total number of $2$-factors of a $3$-factor that factor through $M$, summed over all $3$-factors of the graph $G$ that factor through $M$, i.e., for a $4$-regular planar graph $G$ with perfect matching $M$,

\begin{eqnarray}
\threefp{G}{M}(1)  =\sum_{G' \in \findex{G}{M}_3} \fsize{G'}{M}_2. \label{eq:3-factor-eval-at-1}
\end{eqnarray}

The number, $\threefp{G}{M}(1)$, counts the number of 4-edge colorings (say with colors white, red, blue, purple) of $G$ that have all perfect matching edges labeled the same color, say purple (compare to Corollary~\ref{cor:3-edge-color-of-G}).  As a generalization of the Penrose Formula, it is an interesting enough number by itself to describe value of $\threefp{G}{M}(1)$, but Equation~\ref{eq:3-factor-eval-at-1} actually says more.  Since each term in the sum of the equation is non-negative, when $\threefp{G}{M}(1) = 0$, it implies that for all $3$-factors $G'$ that span $M$,  the graph $G'\setminus M$ must have an odd cycle (compare to Theorem~\ref{thm:existence_of_even_perfect_matching}), or that $G'$ has no $2$-factors that span $M$ (compare to Theorem~\ref{conj:f_size_conjecture}). Note: the last implication does not imply that when $\threefp{G}{M}(1) = 0$ that $G$ has no $2$-factors that span $M$---there can still be $2$-factors that span $M$ that are not part of any $3$-factor $G'$ of $G$. It is in this sense that the polynomial is  about $3$-factors and not $2$-factors, and therefore why it is called the $3$-factor polynomial.

In the next section, a bigraded cohomology theory is developed whose graded Euler characteristic is the $2$-factor polynomial (see Theorem~\ref{thm:main-theorem}).  The author has developed a cohomology theory for $4$-regular planar graphs based upon that construction using the graded vector space $V=\BZ_2[x]/(x^3)$.  In it, there are maps between each of the vector spaces associated to the smoothings in the $A$ term of  Equation~\ref{eq:4-regular-generalformula} to each of the vector spaces associated to the smoothings in the $B$ term.  Proving that it is invariant under flip moves of $4$-regular graphs is beyond the scope of this paper, and left as a conjecture:

\begin{conjecture}
Let $G$ be a planar 4-regular graph and let $M$ be a perfect matching of $G$.  Then there exists a cohomology theory of the pair $(G,M)$ that does not depend upon the perfect matching graph used to define it. Furthermore, the graded Euler characteristic of this cohomology is the $3$-factor polynomial of the pair $(G,M)$.\label{conj:4-reg-cohomology}
\end{conjecture}

 The cohomology in Section~\ref{section:2-factor-cohomology} for $3$-regular planar graphs and its generalizations to $n>3$ like in Conjecture~\ref{conj:4-reg-cohomology} represent a new family of TQFT-like theories. These homology theories are different from link homologies  since, for example, $\mbox{dim } V =3$ for the $V$ above, while  the dimension of the algebra for link homologies must be $2$ in order to be invariant under Reidemeister moves.

\section{The cohomology theory}
\label{section:2-factor-cohomology}

In this section a bigraded cohomology is defined whose graded Euler characteristic is the $2$-factor polynomial. Thus, the cohomology categorifies the $2$-factor polynomial.  The important results of this section are in defining the cochain complex $(C^{i,j},\del)$ for a perfect matching graph and showing $\del^2=0$.  The next  section shows that the cohomology defined by this complex is invariant after performing flip moves.  

For a perfect matching graph $\Gamma$ of a planar trivalent graph $G$ with perfect matching $M$, we assign a bi-graded cochain complex $(C^{*,*}(\Gamma), \partial)$ using tensors and sums of the graded vector space $V=\BZ_2[x]/(x^2)$.  

\subsection{Finite dimensional graded vector spaces} 
\label{subsection:finite-dim-graded-vs}
Recall that the {\em graded (or quantum) dimension}, $\qdim$, of a graded vector space $V=\oplus_m V^m$ is the polynomial in $q$ defined by $$\qdim(V)=\sum_m q^m\dim(V^m).$$

\noindent For a graded vector space $V$, we can shift the grading up or down by $\ell$ by $(V\{\ell\})^m=V^{m-\ell}$.
Clearly, $\qdim(V\{\ell\}) = q^\ell\cdot\qdim(V)$.  The  $\qdim$ is a polynomial in integer powers.

The graded vector space we use in this paper is $V=\BZ_2[x]/(x^2) = \langle 1, x\rangle$.  The grading is:
\begin{eqnarray} \label{eq:degrees-of-V}
\deg 1 &=& 1,\\
\deg x & = & -1. \nonumber
\end{eqnarray}

\noindent Thus, $\qdim V^{\otimes k} = (q^{-1}+q)^k$.
\subsection{Smoothings, states, and hypercubes}
\label{section:smoothing-states-hypercubes}

In this subsection, smoothings and states discussed in Section~\ref{section:polynomial_invariants} are used to describe the hypercube of states.  Using the same notation as the previous section, let $(G,M)$ be a planar trivalent graph $G$ with a perfect matching $M$ where the edges of $M$ are indexed from one to $n$: $M=\{e_1,\dots,e_n\}$. For a perfect matching graph $\Gamma$ of $(G,M)$, let $\Gamma_\alpha$ be a state indexed by $\alpha\in\{0,1\}^n$, where each $\alpha_i$ in $\alpha=(\alpha_1,\dots, \alpha_n)$ represents doing either a $0$-smoothing or $1$-smoothing for each $e_i \in M$.

It is useful to picture the set of states as a hypercube where each state is the vertex of a hypercube (with edges described momentarily).  For example,  Figure~\ref{fig:Theta_3_cube_of_states} is the hypercube of states of $(\theta_3,M_\theta)$.  Observe in Figure~\ref{fig:Theta_3_cube_of_states} that each state $\Gamma_\alpha$ is a collection of (possibly immersed) circles that intersect  transversely in double points. 

\begin{figure}[h]
\psfrag{-1-1-1-1}{\small $(\!-\!1\!,\!-\!1\!,\!-\!1\!,\!-1\!)$}\psfrag{A}{$a'$}
\psfrag{theta3}{$\theta_3=$}\psfrag{B}{$b'$}
\psfrag{000}{$(0,0,0)$}
\psfrag{100}{$(1,0,0)$}
\psfrag{010}{$(0,1,0)$}
\psfrag{001}{$(0,0,1)$}
\psfrag{110}{$(1,1,0)$}
\psfrag{101}{$(1,0,1)$}
\psfrag{011}{$(0,1,1)$}
\psfrag{111}{$(1,1,1)$}
\includegraphics[scale=.3]{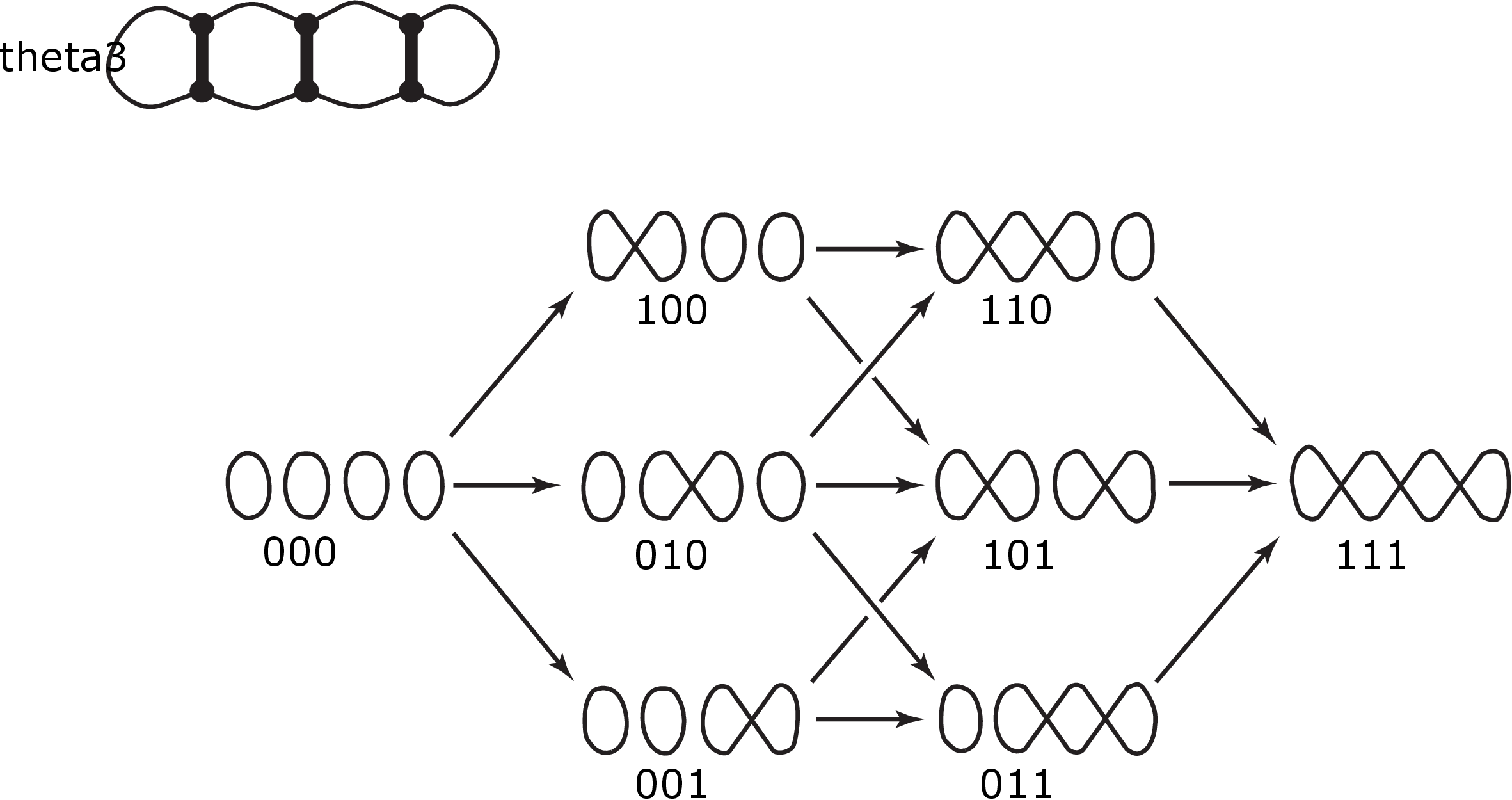}
\caption{Hypercube of states for the graph $\theta_3$ with perfect matching.}
\label{fig:Theta_3_cube_of_states}
\end{figure}


\subsection{The differential chain complex for $\Gamma$}  
\label{subsection:differential}

We are now ready to associate a graded vector space to our perfect matching graph $\Gamma$ for the pair $(G,M)$.  To each $\alpha\in\{0,1\}^n$, associate to $\Gamma_\alpha$ the  vector space $\large V_\alpha = \large V^{\otimes k_\alpha}\!\!\left\{|\alpha|\right\}.$
For example, in Figure~\ref{fig:Theta_3_cube_of_states}, the vector space associated to $\Gamma_{(1,0,0)}$ is $V_{(1,0,0)} = V^{\ot3}\{1\}$.

Define the complex $C^{*,*}(\Gamma)$ by

$$C^{i,*}(\Gamma)=\bigoplus_{\substack{\alpha\in\{0,1\}^n \\ i=|\alpha|}}V_\alpha.$$
The internal grading ($q$-grading) is defined by the grading of the elements in $V_\alpha$.  The homological grading $i$ is integer valued.   For an element $v\in V_\alpha\subset C^{*,*}(\Gamma)$, the homological grading $i$ and the $q$-grading $j$ satisfy:
\begin{eqnarray*}
i & =& |\alpha|,\\
j &=& \deg(v)+|\alpha|,
\end{eqnarray*}
where $\deg(v)$ is the degree of $v$ as an element of  $V^{\otimes k_\alpha}$ of $V_\alpha=V^{\otimes k_\alpha}\{|\alpha|\}$ before shifting the grading by $|\alpha|$. The complex is trivial outside of $i=0, \dots, n$.

The differential can now be defined. Each  $C^{i,*}(\Gamma)$ is the direct sum of vector spaces of the hypercube given by $i=|\alpha|$ (e.g. the columns of Figure~\ref{fig:Theta_3_cube_of_states}).  The edges in the hypercube of states correspond to maps between the graded vector spaces $V_\alpha$ in $C^{i,*}(\Gamma)$ to vector spaces in $C^{i+1,*}(\Gamma)$.  We need some notation to make this map precise.  Consider an edge $\eta$ in the hypercube between two states $\Gamma_\alpha$ and $\Gamma_{\alpha'}$.  Edges occur when $\alpha_i=\alpha'_i$ for all $i$ except for one edge $e_k\in M$ where $\alpha_k=|\alpha'_k-1|$.  For example, there is an edge  in Figure~\ref{fig:Theta_3_cube_of_states} between $(0,0,0)$ and $(1,0,0)$.  Turn each  of these edges into a directed segment $\eta:\Gamma_\alpha \rightarrow \Gamma_{\alpha'}$ by requiring the tail to be where $\alpha_k=0$ and the head where $\alpha'_k=1$, that is, the $0$-smoothing in $\Gamma_\alpha$ is changed into a $1$-smoothing in $\Gamma_{\alpha'}$.

On the level of vector spaces, the directed segment $\eta$ between $\Gamma_\alpha$ and $\Gamma_{\alpha'}$ corresponds to a linear map, $\del_\eta:V_\alpha \rightarrow V_{\alpha'}$.  To define $\del_\eta$,  note that each circle $c$ in the state $\Gamma_\alpha$ has a correpsonding vector space $V_c$ associated with it. The process of replacing the $0$-smoothing in $\Gamma_\alpha$ with the $1$-smoothing in $\Gamma_{\alpha'}$ either fuses two circles together, splits one circle into two, or introduces a double point to a circle.  The corresponding linear maps between the vector spaces are determined by these three processes:
\begin{enumerate}
\item If the process fuses two circles $c_1,c_2$ in $\Gamma_\alpha$ into one circle $c'_1$ in $\Gamma_{\alpha'}$, define a map $m_{12}:V_{c_1}\otimes V_{c_2} \rightarrow V_{c'_1}$ for this situation by multiplication in the algebra $V=\BZ_2[x]/(x^2)$.  That is, $$m(1\ot 1) =1, \ \ m(1\ot x) = m(x\ot 1)=x, \ \ \mbox{and} \ \  m(x\ot x) = 0.$$

\item If the process splits one circle $c_1$ in $\Gamma_\alpha$ into two circles $c'_1,c'_2$ in $\Gamma_{\alpha'}$,  define a map $\Delta_1: V_{c_1} \rightarrow V_{c'_2}\ot V_{c'_3}$ by comultiplication: $$\Delta(1)= x\ot 1 + 1\ot x, \ \
\mbox{ and } \ \ \Delta(x) = x\ot x.$$
\item If the proces introduces a double point in a circle $c_1$ in $\Gamma_\alpha$ to get a circle $c'_1$ in $\Gamma_{\alpha'}$, define a map $A:V_{c_1}\rightarrow V_{c'_1}$ to be the zero map: $A(1) =0$ and $A(x)=0$.
\end{enumerate}
The map $\del_\eta:V_\alpha\rightarrow V_{\alpha'}$ is defined as the tensor product of maps given by the identity on the vector spaces associated with circles that do not change from $\Gamma_\alpha$ to $\Gamma_{\alpha'}$, and either $m, \Delta$ or $A$ on the vector space(s) associated to circles that are modified by the change from a $0$-smoothing in $\Gamma_\alpha$ to a $1$-smoothing $\Gamma_{\alpha'}$.

The differential, $\del^i:C^{i,*}(\Gamma) \rightarrow C^{i+1,*}(\Gamma)$, is defined as the sum of appropriate $\del_\eta$'s.  For $v\in V_\alpha \subset C^{i,*}(\Gamma)$, 
$$\del^i(v) = \sum_{\substack{\eta \ \mbox{\tiny such that }\\ \mbox{\tiny Tail}(\eta) = \alpha}} \del_\eta(v).$$
A sign for each $\del_\eta$ in the sum is not needed due to $\BZ_2$ coefficients.  It is possible to build an integral cohomology for perfect matching graphs, but it is unknown whether it is invariant under the flip moves.

Readers familiar with Khovanov homology \cite{K1} should recognize the maps $m$ and $\Delta$.  The extra map, $A:V\ra V$, is due to the fact that we are working with immersed circles rather than only embedded circles as in Khovanov homology.  The $A$ map is zero in order to preserve the $q$-grading of the differential. In this way, the $A$ map is similar to what happens in virtual link theory (cf. \cite{ArbitraryCoeffs}).  

We now have enough to prove that the main theorem of this section:

\begin{theorem}  $(C^{i,*}(\Gamma), \del^i)$ is a  cochain complex with differential that increases the homological degree by one and preserves the quantum grading, i.e., it has bigrading $(1,0)$. \label{thm:A-Chain-Complex}
\end{theorem}

\begin{proof}
To show that the square of the differential is zero, each diagram of maps corresponding to each possible face in the hypercube of states must be shown to commute.  Recall the standard TQFT/Frobenius algebra argument in Khovanov homology: since saddles appear at different soothing sites and commute in the cobordism category, the induced maps must also commute. In this situation, that argument can not be appealed to directly because of the $A$ map.  Instead, each possible pair of compositions of the maps $m, \Delta$ and $A$ must be analyzed in a case-by-case basis and be shown to lead to commuting diagrams, or are ruled out as possible diagrams by the Jordan curve theorem. 

Fortunately, many of the cases are handled by what is already known about Frobenius algebras and the construction of Khovanov homology, i.e., all diagrams involving only $m$ and $\Delta$ commute as desired \cite{K1}.  The remaining diagram cases are interactions of $m$ and $\Delta$ with the $A$ map.  Since the $A$ map is 0, diagrams such as $A\circ m = A \circ m$ automatically commute (cf. Figure~\ref{fig:3-prism-hypercube-of-states} to see such an example).   An exhaustive analysis of possible commuting diagrams involving $A$ with $m$ and $\Delta$ shows that they either (1) do not come from a hypercube face because they violate the Jordan curve theorem, (2)  commute due to  the fact that $A$ is the zero map, or (3) are the diagram $A\circ A = m\circ \Delta$.  The last case does show up in  hypercubes.  For example, one of the faces of the hypercube of $P_3$ with the candlestick perfect matching (see Figure~\ref{fig:P_3} in Section~\ref{section:Examples}) is:

\begin{figure}[H]
\psfragscanon
\psfrag{A}{$A$}
\psfrag{m}{$m$}
\psfrag{D}{$\Delta$}
\includegraphics[scale=.5]{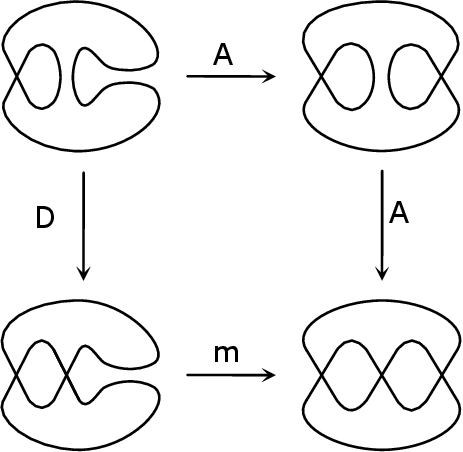}
\label{fig:P_3_A-squared-ex}
\end{figure}
The final case, $A\circ A = m\circ \Delta$, can be checked by hand: Applying the maps to $x$ gives $m\circ\Delta(x) = m(x\ot x) =0$ and $A\circ A(x)=0$. Hence, the diagram commutes in this situation.  Also, since we are using $\BZ_2$ coefficients, the diagram commutes for $1$:  $m\circ\Delta(1) = m(1\ot x + x \ot 1) = 2x=0$.  Since all possible valid diagrams commute,  $\del^{i+1}\circ\del^i=0$.

A calculation shows that the bigrading of $\del^i$ is $(1,0)$. 
\end{proof}

\subsection{Cohomology of graphs with perfect matchings} 
\label{section:cohomology-of-graphs-with-perfect-matchings}
We are now ready to define the cohomology of the pair $(G,M)$.  

\begin{definition} Let $(G,M)$ be a planar trivalent graph $G$ with perfect matching $M$.  Let $\Gamma$ be a perfect matching graph of $(G,M)$.  The {\em cohomology} of the pair $(G,M)$ is
$$H^{i,j}(G,M) = \frac{\ker \del:C^{i,j}(\Gamma) \ra C^{i+1,j}(\Gamma)}{\Ima \del:C^{i-1,j}(\Gamma) \ra C^{i,j}(\Gamma)}.$$
\end{definition}

The cochain complex is enough to prove the second part of Theorem~\ref{thm:main-theorem}, that is, the $2$-factor polynomial is equal to the $q$-graded Euler characteristic of this cohomology.  Since the $q$-graded Euler characteristic of the cohomology is the same as that of the cochain groups, the graded Euler characteristic of $H^{i,j}(G,M)$ can be expressed in terms of the vector spaces associated to each vertex of the hypercube of the perfect matching graph.  Using $\qdim V^{\otimes k} = (q^{-1}+q)^k$ and $A=1$, $B=-q$, and $C=(q^{-1}+q)$ in  Equation~\ref{eqn:general_state-sum}, one gets $$\chi_q(H^{*,*}(G,M)) = \tfp{G}{M}(q).$$

\section{The cochain complex is invariant under flip moves}
\label{section:invariant-under-0-and-1-flip-moves}

One of the main theorems of this paper, Theorem~\ref{thm:main-theorem}, is an immediate consequence of the following stronger theorem:

\begin{theorem} Let $(G,M)$ be a planar trivalent graph $G$ with perfect matching $M$. Let $\Gamma$ and $\tilde{\Gamma}$ be perfect matching graphs of $(G,M)$ related by local isotopies and a sequence of flip moves described in (1), (2), and (3c) of Analysis~\ref{analysis:flip_moves}. Then $(C^{i,j}(\Gamma),\del) \cong (C^{i,j}(\tilde{\Gamma}),\tilde{\del})$ as cochain complexes, that is, there exists a cochain isomorphism $S: C^{i,j}(\Gamma) \ra C^{i,j}(\tilde{\Gamma})$ for each $i$ and $j$ such that $\del \circ S = S\circ \tilde{\del}$.  \label{theorem:main-theorem-of-cohomology}
\end{theorem}

 \begin{proof}
 This theorem  follows from Theorem~\ref{thm:perfect-matching-iso} once we show that the cochain complex is invariant under the flip moves.  Thus, we finish the proof of Theorem~\ref{theorem:main-theorem-of-cohomology} by proving invariance of the cohomology under the $0$-flip move (Proposition~\ref{Prop:Cohomology-invariant-under-0-flip-move}), the $1$-flip move (Proposition~\ref{Prop:Cohomology-invariant-under-1-flip-move}), and the $2$-flip move (Proposition~\ref{Prop:Cohomology-invariant-under-2-flip-move})  in the remaining subsections.
\end{proof}

The invariance under $2$-flip moves is the difficult part of the proof.  This is because the circles of the states before and after a $2$-flip move  can interact with each other differently at the smoothing sites. 

\begin{remark} Theorem~\ref{theorem:main-theorem-of-cohomology} also proves Theorem~\ref{theorem:general_poly_invariants} for the $2$-factor polynomial.  But there are many choices for the loop value $C$ in Theorem~\ref{theorem:general_poly_invariants}  for which this Theorem~\ref{theorem:main-theorem-of-cohomology} does not apply.  For example, the four-color polynomial with $C=q^{-1}+1+q$ is such an example.
\end{remark}

\subsection{The cochain complex is invariant under $0$-flip and $1$-flip moves and some consequences} \label{subsec:cochain-inv}

The proof of invariance under $0$-flip moves follows from the definition of the cochain complex:  After reflecting a component of the graph, the hypercube of states is essentially the same as the original hypercube of states, except all of the circles derived from that component are reflected.  These reflected circles do not change any of the states of the perfect matching edges.  In particular, for all $\alpha \in \{0,1\}^n$,  $k_\alpha$ is the same for the reflected component, the indexes and associated vector spaces remain the same after the flip, the smoothing sites are the same, and therefore the associated maps between vector spaces are as well. Hence,

\begin{proposition}
Let $\Gamma$ and $\tilde{\Gamma}$ be two perfect matching graphs of $(G,M)$ related by a $0$-flip move.  Then $(C^{i,j}(\Gamma),\del) \cong (C^{i,j}(\tilde{\Gamma}),\tilde{\del})$ via a map $S: C^{i,j}(\Gamma) \ra C^{i,j}(\tilde{\Gamma})$  that is the canonical isomorphism. \label{Prop:Cohomology-invariant-under-0-flip-move}
\end{proposition}

An immediate consequence of the definition cochain complex is that the cohomology of the union of two disjoint graphs is the tensor product of their cohomologies:

\begin{proposition}
Let $(G_1,M_1)$ and $(G_2,M_2)$ be two connected, planar trivalent graphs with perfect matchings.  Then 
$$H^{i,j}(G_1 \sqcup G_2, M_1 \sqcup M_2) = \sum_{\substack{i=k+m\\j=l+n}} H^{k,l}(G_1,M_1)\ot H^{m,n}(G_2,M_2).$$ \label{prop:disjoint-union-of-graphs}
\end{proposition}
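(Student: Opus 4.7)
The plan is to show that the chain complex for the disjoint union is the tensor product of the chain complexes of the components, and then apply the Künneth formula over the field $\BZ_2$.

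First I would fix perfect matching drawings $\Gamma_1$ and $\Gamma_2$ for $(G_1,M_1)$ and $(G_2,M_2)$ respectively, placed in disjoint disks of $S^2$, and take $\Gamma = \Gamma_1 \sqcup \Gamma_2$ as a perfect matching drawing of $(G_1 \sqcup G_2, M_1 \sqcup M_2)$. Label the perfect matching edges as $M_1 = \{e_1,\dots,e_{n_1}\}$ and $M_2 = \{f_1,\dots,f_{n_2}\}$, so that indices $\alpha \in \{0,1\}^{n_1+n_2}$ split uniquely as $\alpha = (\beta,\gamma)$ with $\beta \in \{0,1\}^{n_1}$ and $\gamma \in \{0,1\}^{n_2}$. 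Since the resolutions in $\Gamma_1$ and $\Gamma_2$ do not interact, the state $\Gamma_\alpha$ is exactly $(\Gamma_1)_\beta \sqcup (\Gamma_2)_\gamma$, and so $k_\alpha = k_\beta + k_\gamma$ and $|\alpha| = |\beta| + |\gamma|$.

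Next I would identify the chain groups: using $V^{\otimes(k_\beta+k_\gamma)}\{|\beta|+|\gamma|\} \cong V^{\otimes k_\beta}\{|\beta|\} \otimes V^{\otimes k_\gamma}\{|\gamma|\}$, we get $V_\alpha \cong V_\beta \otimes V_\gamma$, and summing over $\alpha$ with $|\alpha|=i$ gives
\[
C^{i,j}(\Gamma) \;\cong\; \bigoplus_{\substack{k+m=i\\ l+n=j}} C^{k,l}(\Gamma_1) \otimes C^{m,n}(\Gamma_2).
\]
Every edge $\eta$ in the hypercube of $\Gamma$ flips exactly one coordinate, either in $\beta$ (and then only circles of $\Gamma_1$ are affected, so $\del_\eta$ is $\del_\eta^{(1)} \otimes \Id$) or in $\gamma$ (and then $\del_\eta$ is $\Id \otimes \del_\eta^{(2)}$). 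Summing over all outgoing edges at $\alpha$ yields the Leibniz-type formula
\[
\del(v_1 \otimes v_2) \;=\; \del_1(v_1)\otimes v_2 \;+\; v_1 \otimes \del_2(v_2),
\]
so $(C^{*,*}(\Gamma),\del)$ is exactly the tensor product complex $(C^{*,*}(\Gamma_1),\del_1) \otimes (C^{*,*}(\Gamma_2),\del_2)$ of bigraded $\BZ_2$-complexes. (No signs appear because we work over $\BZ_2$.)

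Finally, since $\BZ_2$ is a field every chain group is flat, so the algebraic Künneth theorem gives an isomorphism of bigraded cohomologies
\[
H^{i,j}\bigl(C^{*,*}(\Gamma_1)\otimes C^{*,*}(\Gamma_2)\bigr) \;\cong\; \bigoplus_{\substack{i=k+m\\ j=l+n}} H^{k,l}(\Gamma_1)\otimes H^{m,n}(\Gamma_2),
\]
with no $\operatorname{Tor}$ correction. Combining with Theorem~\ref{theorem:main-theorem-of-cohomology} to replace $H^{*,*}(\Gamma_i)$ by $H^{*,*}(G_i,M_i)$ yields the claim. The only step requiring any real care is the verification that the combined differential genuinely obeys the Leibniz rule; this reduces to the observation that smoothing an edge of $M_1$ cannot create or destroy any circle of $\Gamma_2$ (and vice versa) because $\Gamma_1$ and $\Gamma_2$ lie in disjoint disks, which forces every $\del_\eta$ to act as the identity on the tensor factor corresponding to the other component.
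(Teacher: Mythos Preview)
Your proof is correct and follows exactly the route the paper intends: the paper states the proposition as ``an immediate consequence'' of the observation that components of $\Gamma$ in disjoint disks do not interact in the hypercube of states, and you have simply written out the details---the splitting $\alpha=(\beta,\gamma)$, the identification $V_\alpha\cong V_\beta\otimes V_\gamma$, the Leibniz rule for $\partial$, and the K\"unneth formula over $\BZ_2$. One minor remark: you invoke Theorem~\ref{theorem:main-theorem-of-cohomology} at the end, but in the paper this proposition appears \emph{before} invariance under 2-flip moves is established; your argument already works at the level of the chosen drawings $\Gamma_1,\Gamma_2$, so that final appeal is unnecessary (and avoids any circularity).
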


A $1$-flip move is performed by choosing a disk whose boundary intersects the interior of a single perfect matching edge of the graph in a perfect matching graph (cf. Lemma~\ref{thm:bridge-is-in-perfectmatching}). Let $\Gamma$ be the initial perfect matching graph and $\tilde{\Gamma}$ be the perfect matching graph after a $1$-flip along edge $e_i\in M$.  As in the $0$-flip case, the circles of a state $\Gamma_\alpha$ that do not pass through the $\alpha_i$-smoothing either remain the same (if they are outside the flipping disk) or are reflected (if they are in the flipping disk) in $\tilde{\Gamma}_\alpha$.  

The only circle left to consider is the circle that passes through the $\alpha_i$-smoothing: The proof of Proposition~\ref{lemma:k_alpha_is_k_alpha_prime} shows that, in $\Gamma_\alpha$, any circle that enters through one of the two arcs of $\alpha_i$ (for $\alpha_i=0$ or $\alpha_i=1$) must then exit through the other arc.  The same must occur in $\tilde{\Gamma}_\alpha$, i.e., the circle in $\Gamma_\alpha$ that passes through $\alpha_i$ is the same circle in $\tilde{\Gamma}_\alpha$ except it travels in the opposite direction while in the flipping disk. This means that if two circles in $\Gamma_\alpha$ merge to get one circle in $\Gamma_{\alpha'}$, then the same will occur for the corresponding circles in $\tilde{\Gamma}_\alpha$ and $\tilde{\Gamma}_{\alpha'}$.  A similar correspondence occurs if one circle splits into two (the $\Delta$ map) or if a double point is introduced to a circle (the $A$ map).

Hence,  the indexes and associated vector spaces remain the same after the flip, as well as the associated maps between those vector spaces.  Thus,

\begin{proposition}
Let $\Gamma$ and $\tilde{\Gamma}$ be two perfect matching graphs of $(G,M)$ related by a $1$-flip move.  Then $(C^{i,j}(\Gamma),\del) \cong (C^{i,j}(\tilde{\Gamma}),\tilde{\del})$ via a map $S: C^{i,j}(\Gamma) \ra C^{i,j}(\tilde{\Gamma})$  that is the canonical isomorphism. \label{Prop:Cohomology-invariant-under-1-flip-move}
\end{proposition}

In the remainder of this subsection, some calculations  needed for computing the cohomology of examples in Section~\ref{section:Examples} are discussed. The calculations are described here because they are variations on the previous proposition: the first describes ``adding a loop'' to a graph (the simplest version of where a $1$-flip move can be performed) and the other adds a perfect matching edge to a given non-perfect matching edge of a graph (which behaves like a Reidemeister one move in knot theory).

When a graph has a loop, the cohomology calculation of the graph and perfect matching simplifies.  Let $(G,M)$ be a planar trivalent graph $G$ with perfect matching $M$.  For an edge $e\in E\setminus M$ in a perfect matching graph, pick a point $v$ in the interior of $e$ and add a perfect matching edge with a loop to $G$ at $v$ as in Figure~\ref{fig:Add-Lollipop}:  

\begin{figure}[h]
\psfrag{G}{$G$}
\psfrag{g}{$G'$}
\psfrag{a}{$e$}
\psfrag{b}{$v_1$}
\psfrag{c}{$v_2$}
\psfrag{d}{$e'$}
\psfrag{e}{$e''$}
\psfrag{v}{$v$}
\psfrag{l}{$m$}
\psfrag{m}{$\ell$}
\psfrag{w}{$v'$}
\includegraphics[scale=.750]{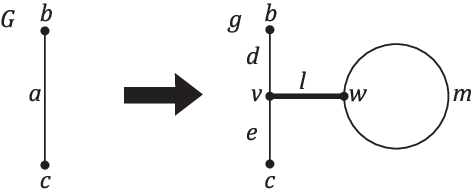}
\caption{Adding a loop to $(G,M)$ to get $(G',M')$.}
\label{fig:Add-Lollipop}
\end{figure}

\noindent In terms of graphs, this amounts to the following operation:
\begin{equation}
G'=\left(V \cup \{v,v'\}, \left(E\setminus \{ e \}\right) \cup \{ e', e'', \ell, m \}\right), \mbox{\ \  and  \ \ }  M'=M\cup \{ m\}.
\label{eq:adding-lollipop}
\end{equation}

Let $\Gamma$ and $\Gamma'$ be perfect matching graphs for $(G,M)$ and $(G',M')$ respectively.  The number of vertices in the hypercube of states for $\Gamma'$ is $2^{n+1}$, which  can be thought of as two copies of the hypercube of states of $\Gamma$ (with $2^n$ vertices each) connected by edges defined as follows:  In the hypercube of states of $\Gamma$, the edge $e$ is part of only one circle (since it is not a perfect matching edge) in each state $\Gamma_\alpha$.  In the first copy of the hypercube of states, remove $e$ from each of these circles and replace it with $\HorseshoeDiag$, and in the second hypercube of states, remove $e$ and replace it with $\RibbonDiag$.  Connect each state with a $\HorseshoeDiag$ to the corresponding state with a $\RibbonDiag$ with an edge. 

One can then calculate the cohomology of $(G',M')$ using this description of the hypercube of states and the cohomology of $(G,M)$:

\begin{proposition}\label{theorem:cohomology-of-a-loop}
Let $(G,M)$ be a planar trivalent graph $G$ with a perfect matching $M$.  Let $(G',M')$ be the graph and perfect matching constructed by adding a perfect matching edge with a loop to the graph $G$ at edge $e$ according to Equation~\ref{eq:adding-lollipop} (see Figure~\ref{fig:Add-Lollipop}). Then 
\begin{eqnarray*}
H^{i,j}(G',M') & \cong  & H^{i,j}(G,M) \oplus H^{i-1,j-1}(G,M).
\end{eqnarray*}
\end{proposition}

This theorem brings up the question of computing the cohomology of  graphs  joined together by a bridge:

\begin{question} Let $(G,M)$ and $(G',M')$ be two planar trivalent graphs and perfect matchings.  Let $e$ be an edge of $G$ such that $e\not\in M$ and $e'$ be an edge of $G'$ such that $e' \not\in M'$.  Construct $(G'',M'')$ by joining $e$ and $e'$ via a bridge edge $m$ ($M''=M \cup M' \cup \{ m \}$) using a similar procedure as Equation~\ref{eq:adding-lollipop} (see Figure~\ref{fig:Add-Lollipop}).  What is the cohomology of $(G'',M'')$ in terms of the cohomology $(G,M)$ and $(G',M')$?
\end{question}

Another operation for which the cohomology can be calculated is adding a perfect matching edge to a given non-perfect matching edge of the graph.  Let $\Gamma$ be a perfect matching graph for $(G,M)$ and let $e$ be an edge not in $M$, then construct $\Gamma'$ for a new pair $(G',M')$ as in the following picture:

\begin{figure}[H]
\psfrag{G}{$G$}
\psfrag{G'}{$G'$}
\psfrag{m}{$m_e$}
\psfrag{e}{$e$}
\includegraphics[scale=.60]{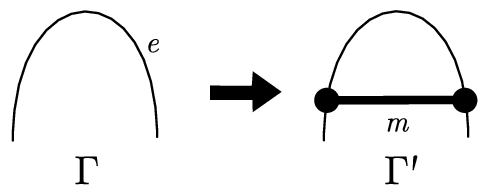}
\caption{Adding a perfect matching edge $m$ to edge $e$  to get $\Gamma'$.}
\label{fig:Add-perfect-matching-edge}
\end{figure}

The hypercube of states for $\Gamma'$ bifurcates into two hypercubes that correspond to the $0$-smoothing of the edge $m$  and the $1$-smoothing, with edges between them.  These hypercubes are depicted in Figure~\ref{fig:Smoothings-for-PM-edge-addition}.

\begin{figure}[H]
\psfrag{G}{$G$}
\psfrag{G'}{$G'$}
\psfrag{m}{$m_e$}
\psfrag{e}{$e$}
\includegraphics[scale=.60]{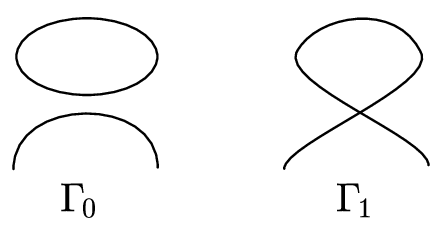}
\caption{The hypercube of states for the $0$-smoothing, $\Gamma_0$, and $1$-smoothing, $\Gamma_1$, of $\Gamma'$.}
\label{fig:Smoothings-for-PM-edge-addition}
\end{figure}

Notice that the states in these hypercubes are nearly the same as the diagrams used to prove that Khovanov homology is invariant under a Reidemeister one move.  In fact, they generate  the same groups and maps on the chain complexes.  Hence, Khovanov's original proof (see Section 5.1 of \cite{K1}) can be used to show that the complex $C(\Gamma')$ is quasi-isomorphic to a gradings-shifted version of the complex $C(\Gamma_1)$.  An analysis of the shifting shows that the $q$-grading of the cohomology of $\Gamma'$ is shifted down by one when  compared to the cohomology of $\Gamma$.  Thus,

\begin{proposition}\label{theorem:add-perfect-matching-edge}
Let $(G,M)$ be a planar trivalent graph $G$ with a perfect matching $M$.  Let $(G',M')$ be the graph constructed from $(G,M)$ by introducing a  perfect matching edge to some edge of $G$ not in $M$ (cf. Figure~\ref{fig:Add-perfect-matching-edge}).  Then
$H^{i,j}(G',M')  \cong  H^{i,j+1}(G,M)$.
\end{proposition}

Another way to view the operation of obtaining $(G',M')$ from $(G,M)$ is to connect sum  $(G,M)$ with the theta graph and its perfect matching $(\theta,M_\theta)$, i.e., $(G',M') = (G \# \theta, M\cup M_\theta)$.  This leads to the question:

\begin{question} Let $(G_1,M_1)$ and $(G_2,M_2)$ be two planar trivalent graphs with perfect matchings.  Let $(G_1\# G_2, M_1\cup M_2)$ be the connect sum of the two graphs along edges in $G_1$ and $G_2$ that are not perfect matching edges.   What is the cohomology of $(G_1\# G_2, M_1\cup M_2)$ in terms of each of the cohomologies?
\end{question}

\subsection{The cochain complex is invariant under $2$-flip moves}
\label{section:cohomology-2-flip-move-invariance}

In this subsection and the next two subsections we complete the proof of Theorem~\ref{theorem:main-theorem-of-cohomology}, and therefore Theorem~\ref{thm:main-theorem}, by proving:  

\begin{proposition}
Let $\Gamma$ and $\tilde{\Gamma}$ be two perfect matching graphs of $(G,M)$ related by a $2$-flip move described in (3c) of Analysis~\ref{analysis:flip_moves}.  Then $(C^{i,j}(\Gamma),\del) \cong (C^{i,j}(\tilde{\Gamma}),\tilde{\del})$ via an isomorphism $S: C^{i,j}(\Gamma) \ra C^{i,j}(\tilde{\Gamma})$ that commutes with the differentials. \label{Prop:Cohomology-invariant-under-2-flip-move}
\end{proposition}

Let $\Gamma$ be a perfect matching graph for $(G,M)$.  Assume that $G$ is a connected graph.  Let $D\subset S^2$ be a flipping disk whose boundary intersects $\Gamma\subset S^2$ along the interiors of two edges $e_1$ and $e_2$  as in 3(c) of Analysis~\ref{analysis:flip_moves}.  Let $\tilde{\Gamma}$ be the perfect matching graph of $(G,M)$ after performing the $2$-flip on $\Gamma$ using the flipping disk $D$.  According to 3(c), either both edges are perfect matching edges or both are not.

If $e_1,e_2\not\in M$, then $e_1$ and $e_2$ are both part of at least one cycle of the graph $G$ (neither are bridges by Lemma~\ref{thm:bridge-is-in-perfectmatching}).  This means that for every state $\Gamma_\alpha$ of the perfect matching graph $\Gamma$, the circle of $\Gamma_\alpha$ that contains (the interior of) edge $e_1$ must also contain (the interior of) edge $e_2$: let $\beta_\alpha$ be the arc of that circle that is contained within the flipping disk.  Thus, the hypercube of states of $\Gamma$ and $\tilde{\Gamma}$ are the same, except that all of the circles and arc $\beta_\alpha$ contained within the flipping disk of each $\Gamma_\alpha$ are reflected.  In particular, for $\Gamma_\alpha$ and $\tilde{\Gamma}_\alpha$, the number of circles, smoothing sites between the circles, and the configurations of how the circles intersect are exactly the same.  This induces a chain map on their associated vector spaces that is a canonical isomorphism.  Therefore, the cochain complex defined by $\Gamma$ is isomorphic to the cochain complex defined by $\tilde{\Gamma}$ via this chain map.

The hardest case of Proposition~\ref{Prop:Cohomology-invariant-under-2-flip-move} is to prove that cochain complex of $\Gamma$ is isomorphic to the cochain complex of $\tilde{\Gamma}$ when the two are related by a $2$-flip move when $e_1,e_2 \in M$ as in 3(c) of Analysis~\ref{analysis:flip_moves}.  The next two subsections address this case. The first subsection reformulates the chain groups in terms of exterior algebras with $\BZ_2$ coefficients and defines the map $$S:C^{i,j}(\Gamma) \ra C^{i,j}(\tilde{\Gamma})$$ in terms of them.  The second subsection proves that $S$ is a chain map that induces an isomorphism between the two cochain complexes.

\subsection{Defining the map $S$ when $e_1,e_2 \in M$ of 3(c)}  Let $k$ be the function that counts the number of circles in a state, i.e., $k(\Gamma_\alpha)=k_\alpha$ (cf. Section~\ref{section:polynomial_invariants}).  By Proposition~\ref{lemma:k_alpha_is_k_alpha_prime-cut-edge-set-case}, $k(\Gamma_\alpha)=k(\tilde{\Gamma}_\alpha)$ for all $\alpha \in \{0,1\}^n$.  Hence, the vector space $V_\alpha$ associated to the state $\Gamma_\alpha$ of $\Gamma$ is the same as the vector space $\tilde{V}_\alpha$ associated to the state $\tilde{\Gamma}_\alpha$ of $\tilde{\Gamma}$.  Likewise, the graded vector spaces  $C^{i,j}(\Gamma)$ are the same as $C^{i,j}(\tilde{\Gamma})$ for all $i$ and $j$. 

To define the map $S:C^{i,j}(\Gamma) \ra C^{i,j}(\tilde{\Gamma})$, we examine the different cases of how circles in $\Gamma_\alpha$ can enter or exit the flipping disk. This analysis is similar to the proof of Proposition~\ref{lemma:k_alpha_is_k_alpha_prime-cut-edge-set-case}, but now takes into account how the arcs are interacting at the smoothing sites with immersed circles inside the flipping disk. 

\begin{figure}[h]
\psfragscanon
\psfrag{a}{$a$}\psfrag{A}{$\tilde{a}$}
\psfrag{b}{$b$}\psfrag{B}{$\tilde{b}$}
\psfrag{c}{$c$}\psfrag{C}{$\tilde{c}$}
\psfrag{d}{$d$}\psfrag{D}{$\tilde{d}$}
\psfrag{g}{$\Gamma_\alpha$}
\psfrag{f}{$\tilde{\Gamma}_{\alpha}$}
\includegraphics[scale = 0.25]{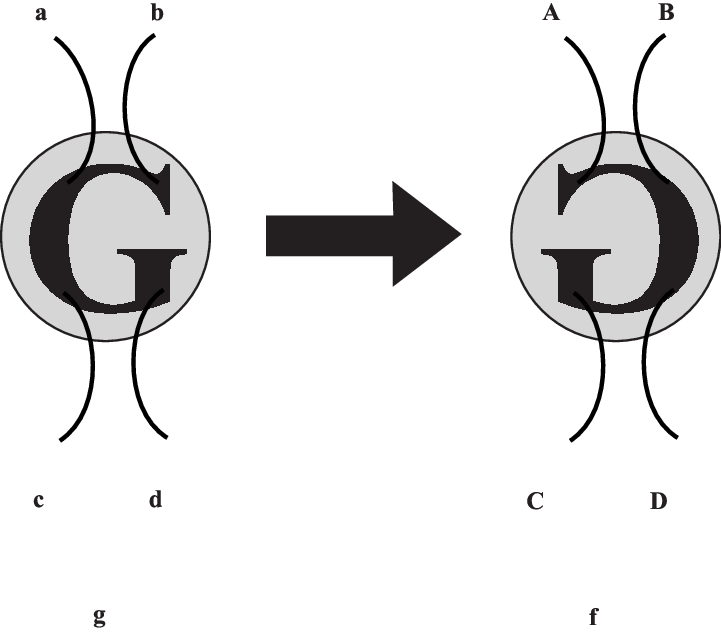}
\caption{The flipping disk of $\Gamma_\alpha$ and $\tilde{\Gamma}_\alpha$ in the case of a $0$-smoothing of edge $e_1$ (top) and a $0$-smoothing of edge $e_2$ (bottom).}
\label{fig:Homology-Invariant-3a-1}
\end{figure}

\begin{analysis} Figure~\ref{fig:Homology-Invariant-3a-1} shows a local picture of the flipping disk of the state $\Gamma_\alpha$ of $\Gamma$ in the situation where, without loss of generality, both edges $e_1$ and $e_2$ are  $0$-smoothings, and the resulting state $\tilde{\Gamma}_\alpha$ in the $2$-flipped  $\tilde{\Gamma}$. There are four possible cases for how circles enter and exit the flipping disk in both $\Gamma_\alpha$ and $\tilde{\Gamma}_\alpha$:
\label{analysis:circles-in-flipping-disk}
\end{analysis}

\begin{enumerate}
\item {\bf One circle, an arc for each smoothing.}  In state $\Gamma_\alpha$, a single circle enters and exits the flipping disk twice: once through an arc $\beta_\alpha$ in the disk that enters at arc $a$ and exits at arc $b$, and once through an arc $\gamma_\alpha$ in the disk that enters at arc $c$ and exits at arc $d$.  After the $2$-flip move,  the arc that enters at $\tilde{a}$ transverses the reflection of $\beta_\alpha$, and then exits at $\tilde{b}$.  A similar statement applies to the $\gamma_\alpha$ arc. Thus, each circle in $\Gamma_\alpha$ gets mapped to a circle in $\tilde{\Gamma}_\alpha$ with all the same smoothing sites (i.e., $0$- and $1$-smoothings) as in $\Gamma_\alpha$.

\item {\bf One circle, arcs through both smoothings.} In state $\Gamma_\alpha$, a single circle enters and exits the flipping disk twice: once through an arc $\beta_\alpha$ in the disk that enters at arc $a$ and exits at arc $c$ (or $d$), and once through an arc $\gamma_\alpha$ in the disk that enters at arc $b$ and exits at arc $d$ (or $c$), i.e., both arcs run from the top smoothing to the bottom smoothing  of the picture through the flipping disk.   After the 2-flip move, the arc that enters at $\tilde{a}$ now transverses the reflection of $\gamma_\alpha$, and then exits at arc $\tilde{c}$ (or $\tilde{d}$).  The other arc enters at arc $\tilde{b}$, transverses the reflection of ${\beta}_\alpha$, and exits at arc $\tilde{d}$ (or $\tilde{c}$).  Thus, the resulting circle  after the $2$-flip ``exchanges'' the arcs in $\tilde{\Gamma}_\alpha$ of the original circle in $\Gamma_\alpha$.  Since this is an exchange of arcs within the same circle, each circle in $\Gamma_\alpha$ gets mapped to a circle in $\tilde{\Gamma}_\alpha$ that continues to have all the same smoothing sites as in $\Gamma_\alpha$.

\item  {\bf Two circles, an arc for each smoothing.}  In state $\Gamma_\alpha$, two circles enter and exit the flipping disk: the first circle enters and exists through an arc $\beta_\alpha$ in the disk that enters at  arc $a$ and exits at arc $b$.   The second circle enters and exits through an arc $\gamma_\alpha$ in the disk that enters at  arc $c$ and exits at arc $d$.  After the $2$-flip move,  the first circle enters at $\tilde{a}$, transverses the reflection of $\beta_\alpha$,  and exits at arc $\tilde{b}$.  A similar comment applies to the second circle. Thus, each circle in $\Gamma_\alpha$ gets mapped to a circle in $\tilde{\Gamma}_\alpha$ that continue to have all the same smoothing sites as in $\Gamma_\alpha$.

\item {\bf Two circles, arcs through both smoothings.} In state $\Gamma_\alpha$, two circles enter and exit the flipping disk: the first circle through an arc $\beta_\alpha$ in the disk that enters at arc $a$ and exits at arc $c$~(or $d$).  The second circle enters and exits through an arc $\gamma_\alpha$ in the disk that enters at arc $b$ and exits at arc $d$~(or $c$).  The arcs of both circles run from the top smoothing to the bottom one.   After the 2-flip move, the first circle  enters at arc $\tilde{a}$, transverses the reflection of $\gamma_\alpha$, and exits at arc $\tilde{c}$ (or $\tilde{d}$).  The second circle enters at arc $\tilde{b}$, transverses the reflection of $\beta_\alpha$, and exits through arc $\tilde{d}$ (or $\tilde{c}$).  The $2$-flip  exchanges the two arcs of the two circles.  In this case, the circles exchange the smoothing sites contained within the flipping disk.  For example, if there is  a third circle in the flipping disk that has a smoothing site with the first circle in $\Gamma_\alpha$, the reflection of that circle will now have a smoothing site with the second circle in $\tilde{\Gamma}_\alpha$. 

\end{enumerate}

To set up how the mapping between algebras $V_\alpha$ and $\tilde{V}_\alpha$, a correspondence between circles in $\Gamma_\alpha$ correspond to circles in $\tilde{\Gamma}_\alpha$ based upon  Analysis~\ref{analysis:circles-in-flipping-disk} is required. For circles that do not enter or exit the flipping disk, the correspondence is well-defined:  $c\subset \Gamma_\alpha$ corresponds to the same circle $\tilde{c}$ in $\tilde{\Gamma}_\alpha$ if it is outside the flipping disk, or it maps to its reflection if it is contained in the flipping disk.  For circle(s) that enter and exit the flipping disk, a choice has to be made.  For example, if $c\subset \Gamma_\alpha$ runs through the flipping disk, then $\tilde{c}$ could be {\em either} the circle of $\tilde{\Gamma}_\alpha$ such that $c|_{S^2\setminus D} = \tilde{c}|_{S^2\setminus D}$ or the circle such that $c|_D = \tilde{c}|_D$. As discussed above, for Cases (1)--(3) of Analysis~\ref{analysis:circles-in-flipping-disk}, either choice gives rise to the same circle.  But in Case (4), a choice has to be made.  Choose the circle that matches the original circle outside of the flipping disk:

\begin{definition}[\bf Correspondence between circles in $\Gamma_\alpha$ and $\tilde{\Gamma}_\alpha$] A circle in $\Gamma_\alpha$ completely outside (or inside) the flipping disk corresponds to same circle (or reflection of that circle) in $\tilde{\Gamma}_\alpha$. A circle $c \subset \Gamma_\alpha$ that enters or exits the flipping disk corresponds to the circle $\tilde{c} \subset \tilde{\Gamma}_\alpha$ such that $$c|_{S^2\setminus D} = \tilde{c}|_{S^2\setminus D}.$$ \label{definition:circle-correspondence}
\end{definition}

It will also be helpful to put a specific ordering on the circles:  trifurcate the set of circles in $\Gamma_\alpha$ (and $\tilde{\Gamma}_\alpha$) into three sets depending on whether they (1) run through the flipping disk, (2) are contained in the flipping disk, or (3) lie completely outside the flipping disk:

\begin{definition}[\bf The trifurcation of circles in $\Gamma_\alpha$ and $\tilde{\Gamma}_\alpha$] For a state $\Gamma_\alpha$ with $k$ circles, trifurcate the set of circles as follows: In Cases (1)--(2) of Analysis~\ref{analysis:circles-in-flipping-disk}, let $\{c_1\}$ of $\Gamma_\alpha$ be the circle that passes through the flipping disk, $\{c_2,\ldots,c_d\}$ be the circles that are contained in the flipping disk, and $\{c_{d+1},\ldots,c_k\}$ lie completely outside the disk.  In Cases (3)--(4), let $\{c_1, c_k\}$ be the circles that pass through the flipping disk, $\{c_2,\ldots, c_d\}$ be the circles that are contained in the flipping disk, and $\{c_{d+1}, \ldots, c_{k-1}\}$ be the circles that lie completely outside.  Trifurcate $\tilde{\Gamma}_\alpha$ similarly using the correspondence described above.
\label{def:correspondence_between_circles}
\end{definition}

Note:  The circle labels $\{c_1, c_k\}$ were chosen instead of $\{c_1,c_2\}$ to make it easier to define $\Delta$ and $m$ in the proofs that follow ($c_k$ will be the circle that is created or merged).

The algebra $V_\alpha$ (and $\tilde{V}_\alpha$) can be redefined in terms of an exterior algebra $\Lambda(W)$ with $\BZ_2$ coefficients for a vector space $W$.   To set up this isomorphism, assume that the state $\Gamma_\alpha$ has $k$ circles (take $k=k_\alpha$ to simplify notation) that have been ordered $c_1,\ldots, c_k$.  Use this order to define $V_\alpha = V^{\ot k}\{|\alpha |\}$, where $V^{\ot k} = V_{c_1}\ot \cdots \ot V_{c_k}$ such that each $V_{c_i}=\BZ_2[x]/ (x^2)$.  The space $V^{\ot k}$ is equivalent to 
\begin{equation}
\BZ_2[x_1,\cdots, x_k]/(x^2_1, x^2_2, \dots, x^2_k)\\
\label{eq:Votk}
\end{equation}
via a map that takes, for example, $1\ot 1\ot x \ot 1\mapsto x_3$ when $k=4$. Here and throughout, the variable $x_i$ corresponds to the circle $c_i$.     
This space, in turn, can be thought of as an exterior algebra with $\BZ_2$ coefficients: 

\begin{lemma}
In the space described in Equation~\ref{eq:Votk}, let $W$ be the subspace given by $W=\langle x_1,x_2,\dots,x_k\rangle$, where the polynomial $x_i$ is associated to the $i$th circle in $\Gamma_\alpha$.  Then $V^{\ot k}$ can be identified with $\Lambda(W)$, i.e.,
$$V^{\ot k} = \Lambda^0(W)\oplus\Lambda^1(W)\oplus \cdots \oplus\Lambda^k(W),$$
where the grading of a generic monomial $x_{i_1}x_{i_2}\cdots x_{i_p}$ is  degree $p$. \label{lemma:space_W} 
\end{lemma}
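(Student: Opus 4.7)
The plan is to exhibit an explicit algebra isomorphism $\phi:\Lambda(W)\to V^{\ot k}$ and check that it respects the stated grading. First I would define $\phi$ on the generators by $\phi(x_i)=x_i$, where on the right $x_i$ denotes the class in $\BZ_2[x_1,\dots,x_k]/\langle x_1^2,\dots,x_k^2\rangle$ associated to the $i$th tensor slot (i.e.\ the element $1\ot\cdots\ot x\ot\cdots\ot 1$ with $x$ in position $i$), and extend multiplicatively.

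To see that $\phi$ is a well-defined algebra homomorphism, recall that $\Lambda(W)$ is the tensor algebra on $W$ modulo the relations $x_i\wedge x_i=0$ and $x_i\wedge x_j+x_j\wedge x_i=0$. The first relation is imposed in $V^{\ot k}$ by the quotient by $\langle x_1^2,\dots,x_k^2\rangle$. The second relation reads $x_ix_j+x_jx_i=0$ in the target; but the target is a commutative $\BZ_2$-algebra, and over $\BZ_2$ we have $1=-1$, so $x_ix_j=x_jx_i$ is literally the same equation as $x_ix_j+x_jx_i=0$. Hence $\phi$ descends to a well-defined algebra map.

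Bijectivity I would verify by comparing bases. A $\BZ_2$-basis for $\Lambda(W)$ consists of the $2^k$ ordered wedges $x_{i_1}\wedge\cdots\wedge x_{i_p}$ with $i_1<\cdots<i_p$, graded by $p$. Under the identification in Equation~\ref{eq:Votk}, a $\BZ_2$-basis for $V^{\ot k}$ is given by the $2^k$ squarefree monomials $x_{i_1}x_{i_2}\cdots x_{i_p}$ with $i_1<\cdots<i_p$ (these correspond exactly to choices, for each tensor factor, of either $1$ or $x$). The map $\phi$ sends basis elements to basis elements bijectively, so it is a vector space isomorphism, and being an algebra map it is an algebra isomorphism. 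The grading is preserved on the nose: the exterior degree of $x_{i_1}\wedge\cdots\wedge x_{i_p}$ is $p$, and its image is a monomial of polynomial degree $p$. This yields the direct sum decomposition in the statement.

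The only real subtlety, which I would flag explicitly, is the indispensable role of the characteristic-two coefficients: over a ring in which $2$ is invertible, $\Lambda(W)$ is strictly anticommutative while the quotient $\BZ[x_1,\dots,x_k]/\langle x_i^2\rangle$ is strictly commutative, and no such isomorphism exists. The identification $V^{\ot k}\cong\Lambda(W)$ is a genuine $\BZ_2$ phenomenon, and this is essentially the entire content of the lemma; the proof itself is a one-line basis count once this point is made.
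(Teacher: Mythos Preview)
Your proof is correct. In fact, the paper does not provide a proof of this lemma at all: it is stated as essentially self-evident, with the only commentary being the remark immediately following that ``because of the $\BZ_2$ coefficients, multiplication in $\Lambda(W)$ is commutative.'' Your argument fills in exactly the details the paper leaves implicit---the well-definedness check via the relations, the basis comparison, and the crucial observation that anticommutativity collapses to commutativity over $\BZ_2$---and your flagged subtlety about characteristic two is precisely the point the paper gestures at.
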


Clearly, $\dim_{\BZ_2} \Lambda^p(W_\alpha) = C(k, p)$.  The grading on $V^{\ot k}$ and $\Lambda(W)$ are different but compatible:  an element $\omega\in\Lambda^p(W)$ has $q$-grading $k-2p$ in $V^{\ot k}$.  Note that, because of the $\BZ_2$ coefficients, multiplication in $\Lambda(W)$ is commutative.  

The beauty of this formulation is that the maps $S$, $\Delta$, and $m$ can be formalized in terms of polynomial multiplication and division. First, we record  useful formulas for $\Delta$ and $m$, and then build up to the map $S$:

\begin{lemma}
Let $\Gamma$ be a perfect matching graph for a planar trivalent graph $G$ with a perfect matching $M$ such that $n=|M|$.  For $\alpha \in \{0,1\}^n$, let $\Gamma_\alpha$ be a state in the hypercube of states of $\Gamma$ that contains $k$ circles.  The vector space corresponding to $\Gamma_\alpha$ is $V_\alpha = V^{\ot k}\{|\alpha |\}$,  where $V^{\ot k} = \Lambda(W_\alpha)$.
Suppose $\omega \in \Lambda^p(W_\alpha)$ such that $\omega=x_{i_1}x_{i_2}\cdots x_{i_p}$ is a basis element.

\begin{enumerate}
\item Let $\Gamma_{\alpha'}$ be the state resulting from merging circles $c_1$ and $c_k$ in $\Gamma_\alpha$ to get a circle $c_1$ in $\Gamma_{\alpha'}$.  On the level of vector spaces, this corresponds to $m_{1k}:\Lambda(W_\alpha) \ra \Lambda(W_{\alpha'})$.  If $x_k \divides \omega$ (i.e., $x_k$ is a factor of $\omega$), then $$m_{1k}(\omega) = x_1\cdot \left(\frac{\omega}{x_k}\right),$$ and $m_{1k}(\omega)=\omega$ otherwise.  Note: $m_{1k}(\omega)=0$ if both $x_1$ and $x_k$ divide $\omega$.

\item Let $\Gamma_{\alpha'}$ be the state resulting from splitting circle $c_1$ in $\Gamma_\alpha$ into two circles $c_1$ and $c_{k+1}$ in $\Gamma_{\alpha'}$. Then  $\Delta_1:\Lambda(W_\alpha) \ra \Lambda(W_{\alpha'})$ and
$$\Delta_1(\omega) = \omega(x_1+x_{k+1}).$$

\item Let $\Gamma_{\alpha'}$ be the state resulting from adding a double point to a circle $c_1$ in $\Gamma_\alpha$ to get a circle $c_1$ in $\Gamma_{\alpha'}$.  Then $A_1:\Lambda(W_\alpha) \ra \Lambda(W_{\alpha'})$ and $A_1(\omega) = 0$.

\item The $\alpha$-degree of $\omega$ does not change under $\Delta$ or $m$:  $\deg_\alpha(\omega) = k-2p+|\alpha |$, $\deg_{\alpha'}(\Delta(\omega)) = \deg_\alpha(\omega)$, and $\deg_{\alpha'}(m(\omega))=\deg_\alpha(\omega)$.
\end{enumerate} 
The maps $\Delta, m$ and $A$ can be defined on basis elements as above and extended linearly to all of $\Lambda(W_\alpha)$.
\end{lemma}

\begin{proof} Each statement follows easily from the original definitions of the maps.
\end{proof}

The map $S:V_\alpha \ra \tilde{V}_\alpha$ can now be defined for each case in Analysis~\ref{analysis:circles-in-flipping-disk}.  For Cases (1)--(3), the last sentence of the first three cases of Analysis~\ref{analysis:circles-in-flipping-disk}  implies that  $S$ can be taken to be the canonical isomorphism in each of those cases: The circles in $\Gamma_\alpha$ that are completely outside the flipping disk are mapped directly to the same circles in $\tilde{\Gamma}_\alpha$.  The circles in $\Gamma_\alpha$ that are contained in the flipping disk are mapped to their reflections in $\tilde{\Gamma}_\alpha$.  Finally, the circle(s) in $\Gamma_\alpha$ that enter and exit the flipping disk also go to circle(s) in $\tilde{\Gamma}_\alpha$ with the same smoothing sites.  Thus, after choosing a correspondence of circles between $\Gamma_\alpha$ and $\tilde{\Gamma}_\alpha$ (cf. Definition~\ref{definition:circle-correspondence}) and using that identification to define the vector spaces $V_\alpha$ and $\tilde{V}_\alpha$, the map $S:V_\alpha \ra \tilde{V}_\alpha$ can be taken to be the identity map for Cases (1)--(3).  

The definition of $S$ for a Case (4) state $\Gamma_\alpha$ is at the heart of the proof for the $2$-flip move for the cochain complexes (and therefore the heart of the proof of Theorem~\ref{theorem:main-theorem-of-cohomology}).  The reason is that the two circles in $\Gamma_\alpha$ that enter and exit the flipping disk exchange their arcs in the disk when the disk is flipped, which means the corresponding circles in $\tilde{\Gamma}_\alpha$ will have exchanged smoothing sites associated to those arcs.  Meanwhile, smoothing sites of the original circles outside the flipping disk remains the same.  Therefore the maps $S:V_\alpha \ra \tilde{V}_\alpha$  and $S': V_{\alpha'} \ra \tilde{V}_{\alpha'}$ must commute with maps $\del_\eta:V_\alpha \ra V_{\alpha'}$ and $\tilde{\del}_\eta:\tilde{V}_\alpha \ra \tilde{V}_{\alpha'}$ in a way that takes into account that circles are {\em sometimes but not always} splitting off, merging, or adding double points to different circles in $\tilde{\Gamma}_\alpha$ than the original circles in $\Gamma_\alpha$.

The map that works for Case (4) is one that adds ``partial derivatives'' with respect to the circles that lie completely inside the flipping disk.  Define:

\begin{definition}
Let $\omega \in \Lambda^p(W_\alpha)$ be the basis element $\omega=x_{i_1}x_{i_2}\cdots x_{i_p}$.  The {\em partial derivative of $\omega$ with respect to $x_i$} is
$$ \Der{\omega}{x_i} := \left\{\begin{array}{cc}  \frac{\omega}{x_i} & \mbox{ if } x_i \divides \omega\\ 0 & \mbox{ if } x_i \notdivides \omega\end{array}\right.$$
Extend this definition linearly to all elements of $\Lambda^p(W_\alpha)$.
\end{definition}

\begin{remark} One must be careful with this definition and not let the notation of partial derivatives deceive---it is defined only formally.  For example, it is true that this partial derivative satisfies a product rule for basis elements $x_i$: if $\omega \in \Lambda^p(W_\alpha)$ and $\eta\in \Lambda^q(W_\alpha)$, then
\begin{eqnarray}
\Der{(\omega \eta)}{x_i} = \Der{\omega}{x_i}\eta+\omega\Der{\eta}{x_i}.\label{eqn:product-rule}
\end{eqnarray}
However, the reason is partially due to the $\BZ_2$ coefficients: $\Der{(x_1\cdot x_1)}{x_1} = \Der{x_1}{x_1}x_1 +x_1\Der{x_1}{x_1} = x_1+x_1 =0$.
\end{remark}

Note: Equation~\ref{eqn:product-rule} will become important in the proofs below.

\begin{definition}[\bf The definition of $S:V_\alpha \ra \tilde{V}_\alpha$]
Let $\Lambda(W_\alpha)$ and $\Lambda(\tilde{W}_\alpha)$ be defined as in Lemma~\ref{lemma:space_W} using the correspondence set up in Definition~\ref{definition:circle-correspondence}.  For Cases (1)--(3) of Analysis~\ref{analysis:circles-in-flipping-disk}, define $S=Id$.  In Case (4), the spaces can be further written  $$W_\alpha=\langle x_1,x_k\rangle \oplus \langle x_2,\ldots, x_d\rangle \oplus \langle x_{d+1},\ldots, x_{k-1}\rangle \mbox{ \ and \ } \tilde{W}_\alpha=\langle x_1,x_k\rangle \oplus \langle x_2,\ldots, x_d\rangle \oplus \langle x_{d+1},\ldots, x_{k-1}\rangle,$$ where each $x_i$ is associated with the appropriate circle in $\Gamma_\alpha$ or $\tilde{\Gamma}_\alpha$ according to the trifurcation set up in Definition~\ref{def:correspondence_between_circles}. With respect to these bases, define for $\omega \in \Lambda(W_\alpha)$,
$$S(\omega) = \omega + x_1\sum_{a=2}^d \Der{\omega}{x_a} +x_k\sum_{a=2}^d \Der{\omega}{x_a}.$$
\label{lemma:calculations_with_S}
\end{definition}

\begin{lemma}
The map $S:\Lambda(W_\alpha) \ra \Lambda(\tilde{W}_\alpha)$ is an isomorphism.
\end{lemma}

\begin{proof}
Define $\tilde{S}:\Lambda(\tilde{W}_\alpha) \ra \Lambda(W_\alpha)$ by the same map as $S:\Lambda(W_\alpha) \ra \Lambda(\tilde{W}_\alpha)$ depending on the case in  Definition~\ref{lemma:calculations_with_S}.  Then $S\circ \tilde{S} = Id$ and $\tilde{S}\circ S = Id$.
\end{proof}

To get the map  $S:C^{i,j}(\Gamma) \ra C^{i,j}(\tilde{\Gamma})$, write $C^{i,j}(\Gamma) =\oplus \Lambda(W_\alpha)$ for the appropriate states $\Gamma_\alpha$ (and do the same for $C^{i,j}(\tilde{\Gamma}_\alpha)$), and use the $S$ in Definition~\ref{lemma:calculations_with_S} for each $\Lambda(W_\alpha)$ depending upon Cases (1)--(4).  Call this collection of maps $S$ when the context is clear.
\bigskip

\subsection{Proof that $S$ is a chain map} 
Since $S$ is an isomorphism of exterior algebras, to prove Theorem~\ref{Prop:Cohomology-invariant-under-2-flip-move} we need only show that $S$ a cochain map.

\begin{theorem}
The map $S:C^{i,j}(\Gamma) \ra C^{i,j}(\tilde{\Gamma})$ commutes with the differential $\del$.
\label{thm:S-commutes-with-del}
\end{theorem}

\begin{proof}
We show that $S$ commutes with the differential $\del$ by checking that the map commutes for each possible diagram corresponding to maps $\del_\eta:V_\alpha \ra V_{\alpha'}$  and $\tilde{\del}_\eta:\tilde{V}_\alpha \ra \tilde{V}_{\alpha'}$.  That is, given an edge $\eta$ in the hypercube from $\Gamma_\alpha$ to $\Gamma_{\alpha'}$, it is required to show $\tilde{\del}_\eta \circ S = S' \circ \del_\eta$ for $S:V_\alpha \ra\tilde{V}_\alpha$ and $S':V_{\alpha'}\ra\tilde{V}_{\alpha'}$.  Certain maps and cases can be dispensed with immediately: All diagrams involving the map $A$ immediately commute since $A$ is the zero map.  Also, for any two of the first three cases of Analysis~\ref{analysis:circles-in-flipping-disk},  we have that $\Delta \circ S = S' \circ \Delta$ and $m \circ S =S' \circ m $ because $S$ and $S'$ are both the identity map in those cases.  Thus, only diagrams involving  Case 4 together with Cases 1-4 of Analysis~\ref{analysis:circles-in-flipping-disk} need to be checked when $\del_\eta$ (and $\tilde{\del}_\eta$) is $\Delta$ or $m$.  

A careful check of the immersed curves in Analysis~\ref{analysis:circles-in-flipping-disk} shows that there cannot be an edge $\eta$ in the hypercube of $\Gamma$ that goes from a Case (4) state to a Case (3) state, or vice versa.  If an $\eta$ did exist, the smoothing change from a $0$-smoothing to a $1$-smoothing must occur inside the flipping disk and must involve both circles of $\Gamma_\alpha$ and both circles of $\Gamma_{\alpha'}$ that enter and exit the flipping disk.  The $1$-smoothing in this situation would add a double point between the two circles of $\Gamma_\alpha$ to give an odd number of double points between the two circles of $\Gamma_{\alpha'}$, which is impossible because the number of double points between two immersed circles in the plane is always an even number by the Jordan curve theorem.

Therefore, using the cases in Analysis~\ref{analysis:circles-in-flipping-disk}, the following {\em edge types} involving Case (4) for an edge of the hypercube $\Gamma$ are the only ones that need to be considered:

\begin{enumerate}
\item  a Case (1) or Case (2) state that splits off a circle to become a Case (4) state, 

\item  a Case (4) state that splits off a circle resulting in a Case (4) state, 

\item a Case (4) state that merges two circles to become a Case (1) or Case (2) state, or 

\item a Case (4) state that merges two circles to become another Case (4) state. 
\end{enumerate}

The proofs that $S$ commutes with the differential are different for each edge type above  and  are given in Lemmas~\ref{lemma:edge-1-commutes}, ~\ref{lemma:edge-2-commutes}, ~\ref{lemma:edge-3-commutes}, ~\ref{lemma:edge-4-commutes}  below.  The proofs of these lemmas completes the proof of Theorem~\ref{thm:S-commutes-with-del}.
\end{proof}

The  notation and definitions set up in this subsection was specifically created to help prove the next set of lemmas.  To summarize, let $e_1,e_2\in M$ of Analysis~\ref{analysis:flip_moves}.  Let $c_1$ (and in the fourth case of Analysis~\ref{analysis:circles-in-flipping-disk}, $c_k$) be the circle(s) that pass through the $e_1$ and $e_2$ smoothings of $\Gamma_\alpha$.  Throughout, assume that $\Gamma_\alpha$ has $k$ circles.  Then $\tilde{\Gamma}_\alpha$ has $k$ circles, and $\Gamma_{\alpha'}$ and $\tilde{\Gamma}_{\alpha'}$ will both have $k+1$ or will both have $k-1$ circles.  Also, trifurcate the set of circles of $\Gamma_\alpha$ and $\tilde{\Gamma}_\alpha$, and the corresponding circles in $\Gamma_{\alpha'}$ and $\tilde{\Gamma}_{\alpha'}$ respectively, into three sets according to Definition~\ref{def:correspondence_between_circles}. 

Let $\Lambda(W_\alpha)$ be the vector space associated to $\Gamma_\alpha$ as in Lemma~\ref{lemma:space_W}, where each $x_i$ in $W_\alpha=\langle x_1, x_2,\ldots, x_{k}\rangle$ corresponds to the circle $c_i$ in $\Gamma_\alpha$.  Continue to write $x_i$ for the corresponding variable in $\Lambda(W_{\alpha'}), \Lambda(\tilde{W}_\alpha),$ and $\Lambda(\tilde{W}_{\alpha'})$ according to the correspondence given by $\eta$, Definition~\ref{definition:circle-correspondence}, and Definition~\ref{def:correspondence_between_circles}. With these conventions understood, showing that $\tilde{\del}_\eta \circ S = S' \circ \del_\eta$ for the different types of edges $\eta$ listed in Edge Types (1)--(4) above amounts to showing that the following diagram commutes,
\begin{equation}\label{eq:commute-diagram}
\begin{gathered}
\xymatrix{
\Lambda(W_\alpha) \ar[rr]^{\Delta \mbox{ (or $m$)}} \ar[d]_{S} \ar @{} [drr] |{\# ?} & &\Lambda(W_{\alpha'}) \ar[d]^{S'}\\
\Lambda(\tilde{W}_\alpha) \ar[rr]_{\tilde{\Delta} \mbox{ (or $\tilde{m}$)}} && \Lambda(\tilde{W}_{\alpha'})
}
\end{gathered}
\end{equation}
where $\Delta$ is the map for Edge Types (1) and (2) and $m$ is the map for Edge Types (3) and (4).

The lemmas below step through each of the diagrams given by Equation~\ref{eq:commute-diagram} for Edge Types (1)--(4) and show that they commute. Throughout the proofs of the following lemmas,  $\omega\in \Lambda(W_\alpha)$ is always the basis element $\omega=x_{i_1}x_{i_2}\cdots x_{i_p}$.  Also, throughout all  calculations,  $S':\Lambda(W_{\alpha'}) \ra \Lambda(\tilde{W}_{\alpha'})$ is calculated in terms of $\omega$, which started  in $\Lambda(W_\alpha)$.

\begin{lemma}[Edge Type (1) Commutes] \label{lemma:edge-1-commutes}
The diagram in Equation~\ref{eq:commute-diagram} commutes when $\Gamma_\alpha$ is the first or second case and $\Gamma_{\alpha'}$ is the fourth case, i.e., a circle $c_{k+1}$ is split off of circle $c_1$  where both $c_1$ and $c_{k+1}$ both enter and exit the flipping disk.  The differentials are $\Delta_1$ and $\tilde{\Delta}_1$.
\end{lemma}

\begin{proof}
In this case, $\Delta_1(\omega)=\omega(x_1+x_{k+1})$.    Apply $S'$ to get:

\begin{eqnarray*}
S'(\omega(x_1+x_{k+1})) = \omega(x_1+x_{k+1}) + x_1\sum_{a=2}^d\Der{(x_1 \omega)}{x_a} +  x_1\sum_{a=2}^d\Der{(x_{k+1} \omega)}{x_a} + x_{k+1}\sum_{a=2}^d\Der{(x_1 \omega)}{x_a} +  x_{k+1}\sum_{a=2}^d\Der{(x_{k+1} \omega)}{x_a}
\end{eqnarray*}
The first and fourth sums on the right hand side of the equation above are zero by applying the product rule (Equation~\ref{eqn:product-rule}).  For example, for $2\leq a \leq d$, $x_1\Der{(x_1\omega)}{x_a} = x_1 (0+ x_1 \Der{\omega}{x_a})=0$ using the fact that $x_1\cdot x_1=0$. The second and third sums are the same since both are equal to 
$$x_1x_{k+1}\sum_{a=2}^d \Der{\omega}{x_a}$$
using the product rule.  The sum of these  sums is zero modulo two.  Therefore, $S'(\Delta_1(\omega))= \omega(x_1+x_{k+1})$.  Since $S$ is the identity, we have $S'(\Delta_1(\omega))=\omega(x_1+x_{k+1})=\tilde{\Delta}_1(S(\omega))$, and the diagram commutes.
\end{proof}

\begin{lemma}[Edge Type (2) Commutes] \label{lemma:edge-2-commutes}
The diagram in Equation~\ref{eq:commute-diagram} commutes when both $\Gamma_\alpha$ and $\Gamma_{\alpha'}$ are the fourth case, and a circle $c_{k+1}$ is split off of:
\begin{enumerate}

\item circle $c_1$ (or equivalently, $c_k$) where the circle $c_{k+1}$ is completely inside the flipping disk. After performing the $2$-flip, circle $c_{k+1}$ is split off of circle $c_k$ instead. The differentials are $\Delta_1$ and $\tilde{\Delta}_k$.

\item the circle $c_1$ (or equivalently, $c_k$) where the circle $c_{k+1}$ is completely outside the flipping disk.  The differentials are $\Delta_1$ and $\tilde{\Delta}_1$.

\item the circle $c_\ell$ for $2\leq \ell \leq d$ (the circle $c_\ell$ is completely inside the flipping disk).  Here the circle $c_{k+1}$ must also be inside the flipping disk.  Hence, the differentials are $\Delta_\ell$ and $\tilde{\Delta}_\ell$.

\item the circle $c_\ell$ for $d+1\leq \ell \leq k-1$ (the circle $c_\ell$ is completely outside of the flipping disk).  Here the circle $c_{k+1}$ must also be outside the flipping disk. The differentials are $\Delta_\ell$ and $\tilde{\Delta}_\ell$.
\end{enumerate}
\end{lemma}

\begin{proof} We prove the third statement of the lemma and leave the other  statements to the reader.  Without loss of generality, assume $\ell=2$.  Then circle $c_{k+1}$ is split off of circle $c_2$ where $c_2$ is inside the flipping disk. Going across and then down  in Equation~\ref{eq:commute-diagram} gives:
\begin{eqnarray}
  S'(\Delta_2(\omega)) & =& S'(x_2\omega + x_{k+1}\omega) \nonumber \\
&=& x_2\omega  +x_1 \sum_{a=2}^d \Der{(x_2\omega)}{x_a}+ x_1\Der{(x_2\omega)}{x_{k+1}} + x_k \sum_{a=2}^d \Der{(x_2\omega)}{x_a}+ x_k\Der{(x_2\omega)}{x_{k+1}} \label{eq:2ndline}\\ 
&& \hspace{1cm} +x_{k+1}\omega +x_1\sum_{a=2}^d \Der{(x_{k+1}\omega)}{x_a}+ x_1\Der{(x_{k+1}\omega)}{x_{k+1}}+ x_k \sum_{a=2}^d \Der{(x_{k+1}\omega)}{x_a}+x_k\Der{(x_{k+1}\omega)}{x_{k+1}} \label{eq:3rdline}
\end{eqnarray}
The derivatives with respect to $x_{k+1}$ in the third and fifth terms of both Equation~\ref{eq:2ndline} and Equation~\ref{eq:3rdline} are because circle $c_{k+1}$ is in the flipping disk and therefore these terms have to be included by the definition of $S'$. The derivatives with respect to $x_{k+1}$ in Equation~\ref{eq:2ndline} are zero  because $x_2\omega$ cannot have a $x_{k+1}$ factor.  However, the sum of the third and fifth terms in Equation~\ref{eq:3rdline} is equal to $x_1\omega +x_{k+1}\omega$ by the product rule (Equation~\ref{eqn:product-rule}).  Also by the product rule, for all terms  in the sums of Equation~\ref{eq:3rdline}, $\Der{(x_{k+1}\omega)}{x_a}=x_{k+1}\Der{\omega}{x_a}$ since $a\not=k+1$.  The same is true for terms in the sums of Equation~\ref{eq:2ndline} except in the case of $a=2$.  For those two terms, $x_1\Der{(x_2\omega)}{x_2}=x_1\omega+x_1x_2\Der{\omega}{x_2}$ and $x_k\Der{(x_2\omega)}{x_2}=x_k\omega+x_k x_2\Der{\omega}{x_2}$ by the product rule. The $x_1\omega$ and $x_k\omega$ terms in the equations of the previous sentence add together with the sum $x_1\omega+x_k\omega$ of the third and fifth terms in Equation~\ref{eq:3rdline} to get zero modulo two. What is left are terms that include either $x_2$ or $x_{k+1}$ as factors.  Factoring the binomial $(x_2+x_{k+1})$ out of this expression gives

$$ S'(\Delta_2(\omega))= \left(\omega + x_1 \sum_{a=2}^d \Der{\omega}{x_a}+ x_k \sum_{a=2}^d \Der{\omega}{x_a}\right)(x_2+x_{k+1}),$$
but the right hand side of this equation is  $\tilde{\Delta}_2(S(\omega))$. Hence, $S'\circ \Delta_2 = \tilde{\Delta}_2\circ S$, and the diagram commutes.
\end{proof}

\begin{lemma}[Edge Type (3) Commutes] \label{lemma:edge-3-commutes}
The diagram in Equation~\ref{eq:commute-diagram} commutes when $\Gamma_\alpha$ is the fourth case and $\Gamma_{\alpha'}$ is the first or second case, i.e., circles $c_1$ and $c_k$ in $\Gamma_\alpha$ are merged to become one circle $c_1$ in $\Gamma_{\alpha'}$.  The differentials are $m_{1k}$ and $\tilde{m}_{1k}$.
\end{lemma}

\begin{proof}
The differential $m_{1k}$ is defined as follows: If $x_k\notdivides \omega$, then $m_{1k}(\omega)=\omega$.  If $x_k\divides \omega$, then $m_{1k}(\omega) = x_1(\frac{\omega}{x_k})$. A similar statement holds for $\tilde{m}_{1k}$.

First, suppose $x_k \divides \omega$ and write $\omega = x_k\eta$. Then 
\begin{eqnarray}
\tilde{m}_{1k}(S(\omega)) & = & \tilde{m}_{1k}\left(x_k \eta + x_1\sum_{a=2}^d\Der{(x_k\eta)}{x_a} + x_k\sum_{a=2}^d\Der{(x_k\eta)}{x_a}\right) \label{eq:proof-case-3-2} \\
&=& \tilde{m}_{1k}\left(x_k \eta + x_1 x_k\sum_{a=2}^d\Der{\eta}{x_a} + x_k x_k\sum_{a=2}^d\Der{\eta}{x_a}\right) \nonumber\\
&=& x_1\eta. \nonumber
\end{eqnarray}

The second line of Equation~\ref{eq:proof-case-3-2} follows from the first by the product rule (Equation~\ref{eqn:product-rule}). The third term on the second line is zero because $x_k x_k=0$.  Similarly, after applying $\tilde{m}_{1k}$, the second term on the second line has a $x_1 x_1$ factor and is also zero. 

On the other hand, $m_{1k}(\omega) = x_1\eta$, and applying $S'=Id$ to this  gives the last line of Equation~\ref{eq:proof-case-3-2}.  Hence, $S'\circ m_{1k} = \tilde{m}_{1k} \circ S$.

If $x_k \notdivides \omega$, then $x_k\notdivides \Der{\omega}{x_a}$ for all $2 \leq a \leq d<k$ when $\Der{\omega}{x_a}$ is nonzero, and both $m_{1k}$ and $\tilde{m}_{1k}$ map $\omega$ to $\omega$ and $\Der{\omega}{x_a}$ to $\Der{\omega}{x_a}$.  Thus, $$\tilde{m}_{1k}(S(\omega))= \tilde{m}_{1k}\left(\omega + x_1\sum_{a=2}^d\Der{\omega}{x_a} +x_k\sum_{a=2}^d\Der{\omega}{x_a}\right) =\omega + x_1\sum_{a=2}^d\Der{\omega}{x_a} +x_1\sum_{a=2}^d\Der{\omega}{x_a} = \omega = S'(m_{1k}(\omega)).$$

For both cases,  $S'\circ m_{12} = \tilde{m}_{k2} \circ S$, and the diagram commutes.
\end{proof}

\begin{lemma}[Edge Type (4) Commutes] \label{lemma:edge-4-commutes}
The diagram in Equation~\ref{eq:commute-diagram} commutes when both $\Gamma_\alpha$ and $\Gamma_{\alpha'}$ are the fourth case, and merge:
\begin{enumerate}
\item circle $c_1$ (or equivalently, $c_k$) with circle $c_\ell$, $2\leq \ell\leq d$, to get circle $c_1$ (circle $c_\ell$ is completely inside the flipping disk).   After performing the $2$-flip, circle $c_\ell$ will be merged with $c_k$ instead.  Hence, the differentials are $m_{1\ell}$ and $\tilde{m}_{k\ell}$.

\item circle $c_1$ (or equivalently, $c_k$) with circle $c_\ell$, $d+1\leq \ell\leq k-1$, to get circle $c_1$ (circle $c_\ell$ is completely outside the flipping disk).  The differentials are $m_{1\ell}$ and $\tilde{m}_{1\ell}$.

\item circle $c_\ell$, $2\leq\ell \leq d$, with circle $c_{\ell'}$, $2\leq \ell' \leq d$, to get circle $c_\ell$ (both circles $c_\ell$ and $c_{\ell'}$ are completely inside the flipping disk).  The differentials are $m_{\ell\ell'}$ and $\tilde{m}_{\ell\ell'}$.

\item circle  $c_\ell$, $d+1\leq\ell \leq k-1$, with circle $c_{\ell'}$, $d+1\leq \ell' \leq k-1$, to get circle $c_\ell$ (both circles $c_\ell$ and $c_{\ell'}$ are completely outside the flipping disk).  The differentials are $m_{\ell\ell'}$ and $\tilde{m}_{\ell\ell'}$.
\end{enumerate}
\end{lemma}

\begin{proof}
We prove the first statement and leave the other statements to the reader.  Without loss of generality, assume $\ell=2$. For the differential $m_{12}$, circles $c_1$ and $c_2$ are merged to become one circle $c_1$.  After the $2$-flip move, since $c_2$ is inside the flipping disk, the circle $c_2$ will be merged with circle $c_k$ instead to get circle $c_k$.  The differential  $m_{12}$ is defined as follows: If $x_2\notdivides \omega$, then $m_{12}(\omega)=\omega$.  If $x_2\divides \omega$, then $m_{12}(\omega) = x_1(\frac{\omega}{x_2})$. A similar statement holds for $\tilde{m}_{k2}$.

First, suppose $x_2 \divides \omega$ and write $\omega = x_2\eta$. Then

\begin{eqnarray}
\tilde{m}_{k2}(S(\omega)) & = & \tilde{m}_{k2}\left(x_2 \eta + x_1\sum_{a=2}^d\Der{(x_2\eta)}{x_a} + x_k\sum_{a=2}^d\Der{(x_2\eta)}{x_a}\right) \label{eq:proof-case-4-2} \\
&=& \tilde{m}_{k2}\left(x_2 \eta + x_1 \left(\eta + x_2\sum_{a=2}^d\Der{\eta}{x_a}\right) + x_k \left(\eta+x_2\sum_{a=2}^d\Der{\eta}{x_a}\right)\right) \nonumber\\
&=& x_k\eta +x_1\eta +x_1 x_k \sum_{a=2}^d\Der{\eta}{x_a} + x_k\eta + x_k x_k\sum_{a=2}^d\Der{\eta}{x_a} \nonumber\\
&=& x_1\eta+x_k \sum_{a=2}^d\Der{(x_1\eta)}{x_a}. \nonumber
\end{eqnarray}

The second line of Equation~\ref{eq:proof-case-4-2} follows from the first by the product rule (Equation~\ref{eqn:product-rule}). Note: the terms $x_1\eta$ and $x_k\eta$  in the second line are due to taking the derivative with respect to $x_2$, i.e., the $a=2$ term.  After applying the map $\tilde{m}_{k2}$, the last term of third line  is zero because $x_k\cdot x_k=0$, and the two $x_k\eta$ terms sum to zero modulo two. Using the product rule again on the remaining nonzero sum of the third line results in the last line.

On the other hand, $m_{12}(\omega) = x_1\eta$, and applying $S'$ to this together with the product rule gives the last line of Equation~\ref{eq:proof-case-4-2}.  Hence, $S'\circ m_{1\ell} = \tilde{m}_{1\ell} \circ S$.

If $x_2 \notdivides \omega$, then $x_2\notdivides \Der{\omega}{x_a}$ for all $2 \leq a \leq d$ when $\Der{\omega}{x_a}$ is nonzero, and both $m_{12}$ and $\tilde{m}_{k2}$ map $\omega$ to $\omega$ and $\Der{\omega}{x_a}$ to $\Der{\omega}{x_a}$.  Thus, $$S'(m_{12}(\omega))= \omega + x_1\sum_{a=2}^d\Der{\omega}{x_a} +x_k\sum_{a=2}^d\Der{\omega}{x_a} = \tilde{m}_{k2}(S(\omega)).$$

For both cases,  $S'\circ m_{12} = \tilde{m}_{k2} \circ S$, and the diagram commutes.
\end{proof}

\subsection{Is there a category theoretic approach to the proof of Theorem~\ref{theorem:main-theorem-of-cohomology}?}
In examining the proofs of the previous four lemmas together, note that each diagram commutes based upon a different algebra calculation.  Thus, there does not seem to be a ``universal'' algebra calculation that works for all cases at once---each case depends upon whether the circle(s) being split off or merged are completely inside, outside, or running through the flipping disk.  Still there are important commonalities between the proofs: the product rule is consistently applied to the partial derivatives to introduce or cancel (modulo two) exactly the terms needed  to make the diagrams commute each time.  Therefore, one can hope for a more encompassing argument in the future.  

Is there a category theoretic approach to this proof? There are examples of similar ideas in the literature based upon webs and foams.  In \cite{KM3}, Kronheimer and Mrowka  show how to define an instanton homology for webs, i.e., trivalent graphs embedded in $\BR^3$ (see also \cite{KM2,KM3, KM1}).  Based upon these results, and closer to the present paper in terms of working with combinatorial structures, is the work of Khovanov and Robert \cite{KR} (see also \cite{RW}). In both papers, the authors define or suggest the category of {\fontfamily{phv}\selectfont Foams}  with webs as objects and isomorphism classes of foams with boundary as morphisms.  These papers certainly lend support to a conjectural relationship between the cohomology of this paper and webs and foams, for example.  

It is interesting to note that the instanton homologies and foam evaluations described in these paper depends on the topology of how the webs are embedded into $\BR^3$, at least for the definition of the invariants. The invariants of this paper, on the other hand, depend only on the graph and a choice of perfect matching, and later (see Section~\ref{section:extending-to-graphs}), only the graph itself. There are also questions about what role the perfect matching edges would play in terms of foams and the meaning of the $A$ map in the context of webs and foams. Both perfect matchings and the $A$ map seem to fit in more naturally with TQFT-like theories of virtual knots than the TQFT approaches above (cf. \cite{BKR}). But this only makes the search for a relationship more enticing, not less. The map $S$ and its use in the proof of the lemmas above hints at something new and interesting in category theory yet undiscovered. 

\section{Examples of cohomology calculations}
\label{section:Examples}

In this section we calculate the cohomology of a few well known families of planar trivalent graphs. These examples where chosen to highlight different properties and behavior of the cohomology.  For instance, examples are presented where the cohomology of the pair $(G,M)$ has strictly more information than the $2$-factor polynomial, showing that the cohomology is stronger than the $2$-factor polynomial as an invariant.  Many of the results established in this section rely upon theorems from Subsection~\ref{subsec:cochain-inv}.

The first example, the $3$-prism, shows that the cohomology is a finer invariant than the $2$-factor polynomial.  Let $P_3$ be the $3$-prism together with each of its perfect matchings (cf. Figure~\ref{fig:P_3}).    

\begin{figure}[h]
\psfrag{A}{\!\!\!\!$(P_3, L)$}
\psfrag{B}{\!\!\!\!$(P_3, C)$}
\includegraphics[scale=.7]{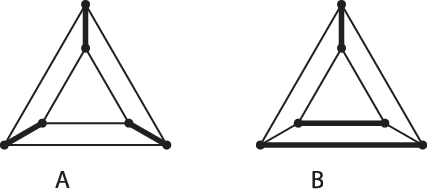}
\caption{The $P_3$ graph with its circular ladder matching $L$  and candlestick matching $C$.}
\label{fig:P_3}
\end{figure}

As an illustrative example, the hypercube of states for the pair $(P_3,L)$ is presented in Figure~\ref{fig:3-prism-hypercube-of-states}.  Note that the number of circles drops by one for the states as the cohomological degree is increased by one---up until the last degree. 

\begin{figure}[H]
\psfragscanon
\psfrag{-1-1-1-1}{\small $(\!-\!1\!,\!-\!1\!,\!-\!1\!,\!-1\!)$}\psfrag{a}{$m$}
\psfrag{000}{\small $(0,0,0)$}
\psfrag{100}{\small $(1,0,0)$}
\psfrag{010}{\small $(0,1,0)$}
\psfrag{001}{\small $(0,0,1)$}
\psfrag{110}{\small $(1,1,0)$}
\psfrag{101}{\small $(1,0,1)$}
\psfrag{011}{\small $(0,1,1)$}
\psfrag{111}{\small $(1,1,1)$}
\psfrag{b}{$A$}\psfrag{B}{$b'$}
\includegraphics[scale=.1]{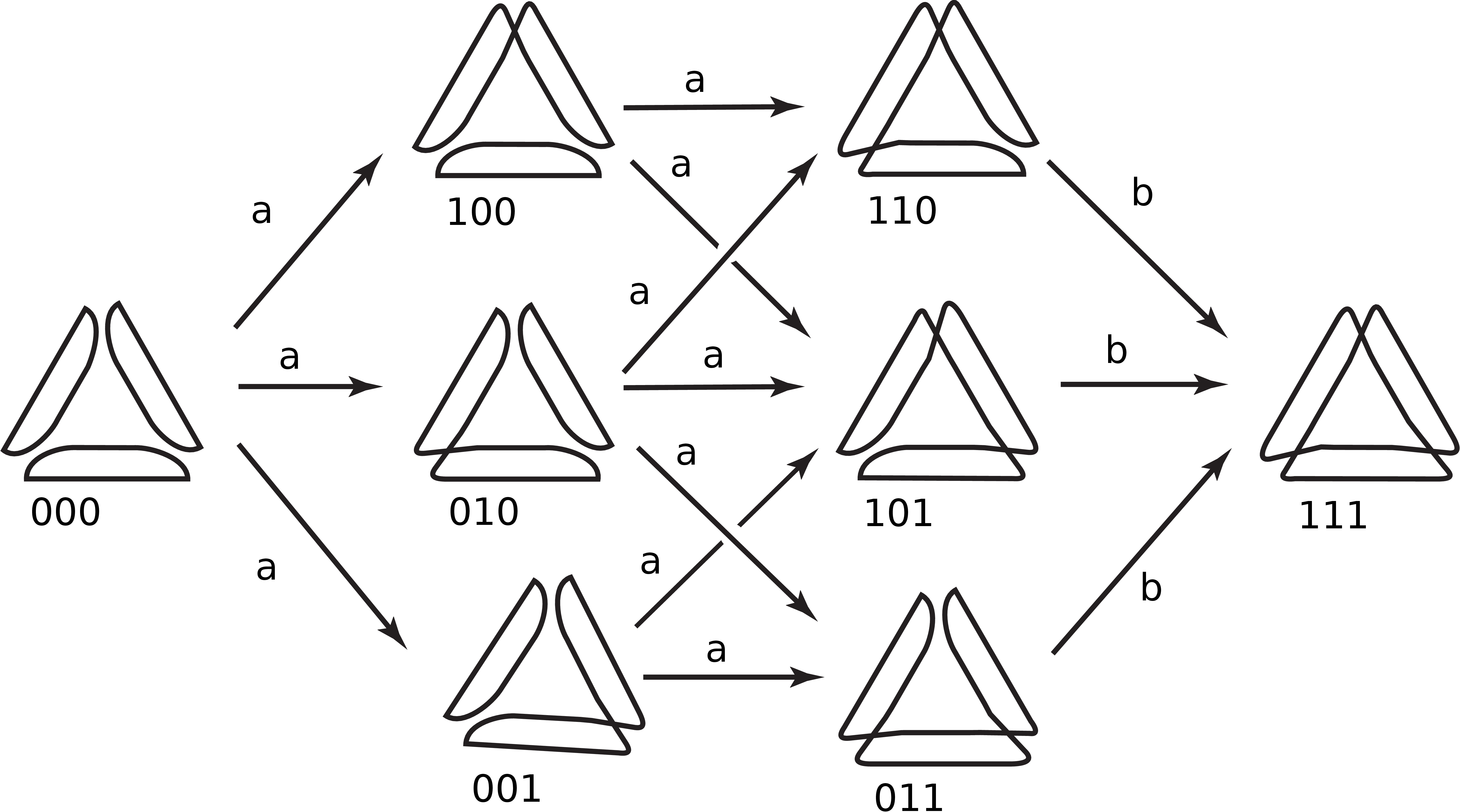}
\caption{Hypercube of states for the graph $P_3$ with the circular ladder $L$ perfect matching.}
\label{fig:3-prism-hypercube-of-states}
\end{figure}

Using the chain complex for $(P_3,L)$ generated from the hypercube in Figure~\ref{fig:3-prism-hypercube-of-states}, the cohomology of $H^{i,j}(P_3,L)$   is presented in Table~\ref{table:cohomology-of-P3-L}.  The cohomology of $(P_3,C)$ is presented in Table~\ref{table:cohomology-of-P3-C}.

\begin{minipage}{3in}
\begin{table}[H]
\renewcommand{\arraystretch}{1}
\caption{$H^{i,j}(P_3,L)$.}
\centering
\begin{tabular}{| c || c | c | c | c | c |}
\hline
$4$ & & & & $\BZ_2$ \\ \hline
$3$ & & & $\BZ_2$ & \mbox{}  \\ \hline
$2$ & & &  & $\BZ_2$ \\ \hline
$1$ & & $\BZ_2$ & $\BZ_2$ & \\ \hline
$0$ &  &   &      & \\ \hline
$-1$ & $\BZ_2$ & $\BZ_2$ &  & \\ \hline
$-2$ &&&&\\ \hline
$-3$ & $\BZ_2$ &&&\\ \hhline{|=||=|=|=|=|=|}
\diagbox[dir=SW]{$j$}{$i$} & $0$& $1$ & $2$ & $3$\\ \hline
\end{tabular} \label{table:cohomology-of-P3-L}
\end{table}
\vspace{.2cm}
\end{minipage}\begin{minipage}{3in}
\begin{table}[H]
\renewcommand{\arraystretch}{1}
\caption{$H^{i,j}(P_3,C)$.}
\centering
\begin{tabular}{| c || c | c | c | c | c |}
\hline
$3$ & & & $\BZ_2$  \mbox{}  \\ \hline
$2$ & & &   \\ \hline
$1$ & & $\BZ_2$ & $\BZ_2$ \\ \hline
$0$ & $\BZ_2$ &   &       \\ \hline
$-1$ & & $\BZ_2$   &   \\ \hline
$-2$ & $\BZ_2$ &&\\ \hhline{|=||=|=|=|=|}
\diagbox[dir=SW]{$j$}{$i$} & $0$& $1$ & $2$ \\ \hline
\end{tabular} \label{table:cohomology-of-P3-C}
\end{table}
\end{minipage}


The $2$-factor polynomials are computed by taking the graded Euler characteristic of the cohomology above:
\begin{eqnarray*}
\tfp{P_3}{L}(q) & = & q^{-3} - q^2 +q^3 -q^4, \mbox{ or}\\
\tfp{P_3}{C}(q) &=& q^{-2}-q^{-1}+1+q^3.
\end{eqnarray*}

By comparing the cohomology with the $2$-factors for each perfect matching, one can see that the cohomology groups are stronger invariants than the polynomial invariants.  In this example, both types of invariants distinguish the two perfect matchings on $P_3$, but the stronger cohomology invariant is likely  able to distinguish two perfect matchings on the same graph with the same $2$-factor polynomials (cf. Conjecture~\ref{conj:same-2-factor-different-cohomology}.)

These perfect matchings also highlight results from other theorems in this paper.  For example, the ladder perfect matching $L$ is an odd perfect matching (subtracting the ladder gives two odd 3-cycles).  Consequently, evaluating the $2$-factor polynomial for $L$ at $1$ should be $0$, as it is.  The candlestick perfect matching $C$ is an even perfect matching. Evaluating the $2$-factor polynomial at $1$ is $2$, which captures the fact that there are two $2$-factors that contain $C$ as in Theorem~\ref{conj:f_size_conjecture}.

While the cochain complex is actually an invariant (cf. Theorem~\ref{theorem:main-theorem-of-cohomology}), the cohomology, like all homology theories, can capture salient information about the graph not directly observed in the cochain complex.  To show that this is possible, we present both extremes, i.e., examples of graphs and perfect matchings where the cochain complex is equal to the  cohomology (i.e., $\del = 0$ for all $i$ and $j$) and examples with large nonzero cochain complexes that all collapse to a cohomology isomorphic to $\BZ_2\oplus \BZ_2$ up to a quantum grading shift.

Define the {\em $m$-th dumbbell graph} to be  $D_m$ together with perfect matchings $M_m$ as shown in  the figure:
\begin{figure}[H]
\psfrag{A}{$\ldots m$}\psfrag{B}{$\ldots m$}
\psfrag{C}{$(\theta_m, M_{\theta})$}\psfrag{D}{$(D_m, M_m)$}
\includegraphics[scale=0.2]{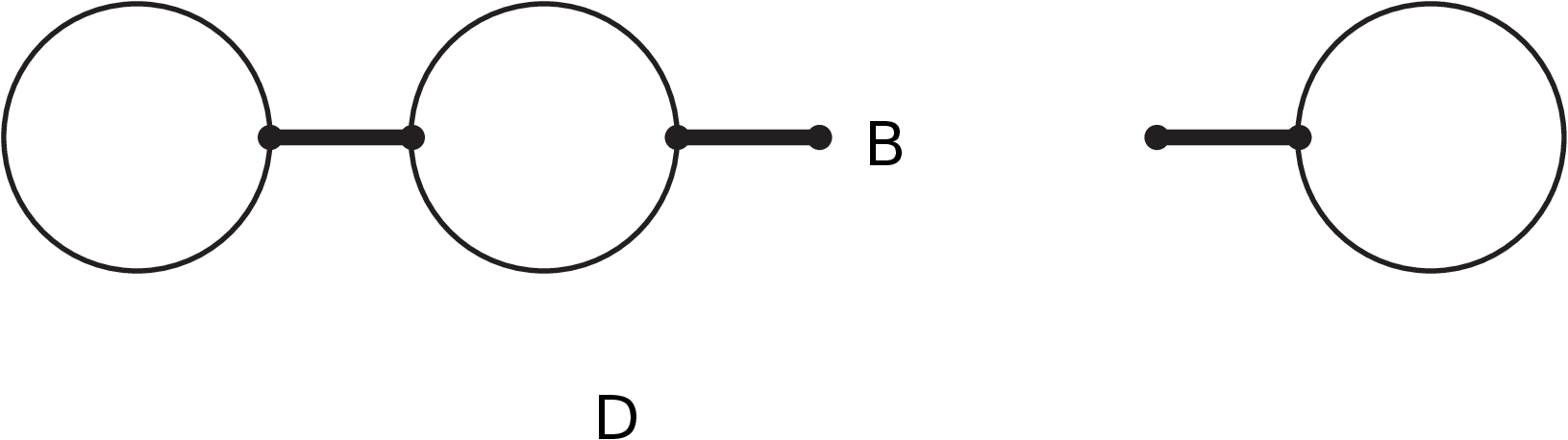}
\label{fig:theta_and_dumbbell}
\end{figure}
\noindent  Every cochain group ($2^{m+1}$ in all) of $m$-th dumbbell graph $D_m$ is also a nontrivial cohomology group.  This is because every map corresponding to an edge in the hypercube of states of $D_m$ is the $A$ map.  (In fact, the maps will be $A$ maps for any ``tree of loops'' created by chaining and branching loops together using the construction described above Proposition~\ref{theorem:cohomology-of-a-loop}.) Therefore,

\begin{theorem}
For the $m$-dumbbell graph $D_m$ with perfect matching $M_m$, 
$$H^{i,j}(D_m,M_m) =\bigoplus_{k=1}^{C(m,i)} \BZ_2$$
for $0\leq i \leq m$, and  $j=i-1$ or $j=i+1$, and $0$ otherwise.
\end{theorem}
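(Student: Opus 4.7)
The plan is to reduce the entire cohomology computation to a direct reading of the cochain groups by showing that the differential is identically zero. The underlying reason is structural: every edge of $M_{m}$ is a bridge of $D_{m}$, and consequently every state in the hypercube has the same, and in fact minimal, number of immersed circles.

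First, I would establish that every $e_i\in M_m$ is a bridge of $D_m$. This is visible from the structure depicted in Figure~\ref{fig:theta_and_dumbbell} and is also consistent with the fact, cited in the paper, that $M_m$ is the unique perfect matching of $D_m$ together with Lemma~\ref{thm:bridge-is-in-perfectmatching} (all bridges lie in the perfect matching) and the edge count $|M_m|=m$. Applying Lemma~\ref{lemma:k_alpha_is_k_alpha_prime} to each coordinate in turn then shows that $k_\alpha$ is constant on $\{0,1\}^m$. A direct calculation of the all-zero state $\Gamma_{(0,\ldots,0)}$, using the explicit smoothing rule on the chain-of-dumbbells structure, gives a single immersed circle; hence $k_\alpha=1$ for every $\alpha$.

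The crucial consequence is this: along any directed edge $\eta\colon\Gamma_\alpha\to\Gamma_{\alpha'}$ in the hypercube, both states have exactly one circle, so $\eta$ corresponds topologically to the addition of a double point to a single circle. By the three-case definition of $\partial_\eta$ in Section~\ref{subsection:differential}, this is precisely the map $A\colon V\to V$, which is identically zero. Therefore $\partial_\eta=0$ for every edge of the hypercube, and so $\partial^i\equiv0$ on all of $C^{*,*}(D_m,M_m)$. This gives $H^{i,j}(D_m,M_m)\cong C^{i,j}(D_m,M_m)$, and the claimed formula follows by direct enumeration: there are exactly $\binom{m}{i}$ states $\alpha$ with $|\alpha|=i$, each contributing $V_\alpha=V\{i\}$ with basis vectors $1$ and $x$ in $q$-gradings $i+1$ and $i-1$ respectively, and no other $q$-gradings occur.

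The only genuine obstacle is the structural verification in the first step (that all $m$ perfect matching edges are bridges, and that one explicit state has a single circle); everything after that is formal. As an alternative route that sidesteps the geometry, I would instead argue by induction on $m$ using the lollipop theorem from Section~\ref{section:2-factor-cohomology}: since $D_m$ is obtained from $D_{m-1}$ by attaching a lollipop to one of its non-matching edges, the theorem gives
\[
\dim H^{i,j}(D_m,M_m)=\dim H^{i,j}(D_{m-1},M_{m-1})+\dim H^{i-1,j-1}(D_{m-1},M_{m-1}),
\]
with base case $D_1=\dumbbell$ verified by the two-state direct computation. Pascal's identity $\binom{m}{i}=\binom{m-1}{i}+\binom{m-1}{i-1}$ together with the grading shift $j\mapsto j-1$ exactly matches the pairing of generators at $j=i\pm1$, closing the induction.
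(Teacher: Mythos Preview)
Your proposal is correct and follows exactly the approach the paper sketches: the paper's only justification for this theorem is the one-line remark preceding it that ``every cochain group \ldots\ is also a cohomology group (i.e., $\partial=0$),'' and you have supplied the details---each $e_i\in M_m$ is a bridge, Lemma~\ref{lemma:k_alpha_is_k_alpha_prime} forces $k_\alpha\equiv 1$, hence every edge map is $A=0$---that the paper leaves implicit.

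Your alternative inductive argument via the lollipop theorem is not in the paper and is a nice independent route; it avoids invoking the bridge lemma altogether, at the cost of needing to verify the geometric claim that $D_m$ is obtained from $D_{m-1}$ by attaching a lollipop to a non-matching edge.
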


The proof of this theorem follows from a repeated application of Proposition~\ref{theorem:cohomology-of-a-loop}.  

Next, we compare the cohomology of $D_m$ to the $m$-theta graph $\theta_m$  with its perfect matching $M_\theta$ (cf. Subsection~\ref{subsection:distinguishing-pm-of-a-graph}).  The $m$-theta graph is the other extreme. The hypercube for $(\theta_m, M_\theta)$ consists of $C(m,i)$ number of states for each homology grading $i$, and each state in that grading has $m+1-i$ circles.  Thus, the $q$-dimension of the cochain groups corresponding to each state in grading $i$ is $(q+q^{-1})^{(m+1-i)}$.  Therefore the number and dimension of non-zero cochain groups quickly grows as $m$ grows.  However, almost all of the non-zero cochain groups give rise to trivial cohomology groups:

\begin{theorem}
For the $m$-theta graph $\theta_m$ with perfect matching $M_\theta$, 
$$H^{0,-1-m}(\theta_m,M_\theta) = \BZ_2 \mbox{ \ \ and \ \ } H^{0,1-m}(\theta_m,M_\theta) = \BZ_2,$$
and $0$ otherwise.
\end{theorem}

The proof of this theorem follows from a repeated application of Proposition~\ref{theorem:add-perfect-matching-edge}.  

Even though the cochain complex for $(\theta_m,M_\theta)$ is large, the cohomology says that in some sense all $m$-theta graphs are ``equivalent'' to the ungraph ($m=0$) up to a normalization of the quantum grading.  This is one of the realizations that lead to the paper of Kauffman, Rushworth, and the author \cite{BKR} and its equivalence relationships on trivalent ribbon graphs.

Finally, notice that computing the cohomology is entirely straightforward and can be done easily for any reasonably-sized graph or reasonable family of graphs (like the families presented in this section). This calculation can be automated with a computer program.  In fact, it is possible to convert a computer program already written by the author, Heather Dye, Aaron Kaestner, Lou Kauffman, and Ben McCarty for virtual links into a program that can compute the cohomology of this paper (cf. \cite{BKM}). This program for planar trivalent graphs will be made available in the near future.

\section{Polynomial and cohomology invariants of planar trivalent graphs}\label{section:extending-to-graphs}

At this point, the reader may think that having to choose a perfect matching for the graph $G$ is too restrictive.  That is, while the polynomial and cohomology invariants introduced in this paper are powerful tools for describing properties  of and distinguishing perfect matchings of a graph, often mathematicians are interested in invariants of just  the graph itself.  In this section, we address this issue by showing that the following two  questions have positive answers:

\begin{enumerate}
\item Can the cohomology theory in this paper be extended in a way to define an invariant of the graph?  
\item The loop value of the four color polynomial is $3$ as in the Penrose Formula (cf. Definition~\ref{defn:four-color-poly}) while the loop value for the $2$-factor polynomial is $2$ (substitute $q=1$ into $q+q^{-1}$).  The Penrose Formula counts all $3$-edge colors directly while the $2$-factor counts $3$-edge colorings that have a fixed color on the perfect matching edges.  Is there a way to build a polynomial/cohomology theory out of the chain complex defined in this paper that  also counts {\em all} $3$-edge colorings like the Penrose Formula?
\end{enumerate}

These two questions are related and show the versatility of working with graph and perfect matching pairs, at least initially.  The answers to these questions also shows that one can choose a  loop value of $2$ or $3$. Both lead to theories that count the number of $3$-edge colorings of a planar trivalent graph.

The first question is addressed by showing that all trivalent plane graphs have an associated perfect matching graph called  the ``blow-up.''  The blow-up has a canonically defined perfect matching.

\begin{definition}
Let $G$ be a planar trivalent graph and $\Gamma$ be a plane graph of $G$.  Define the {\em blow-up of $\Gamma$}, denoted $\Gamma^\flat$, to be the perfect matching graph given by replacing every vertex of $\Gamma$ with a circle as in
\begin{center}
\includegraphics[scale=0.09]{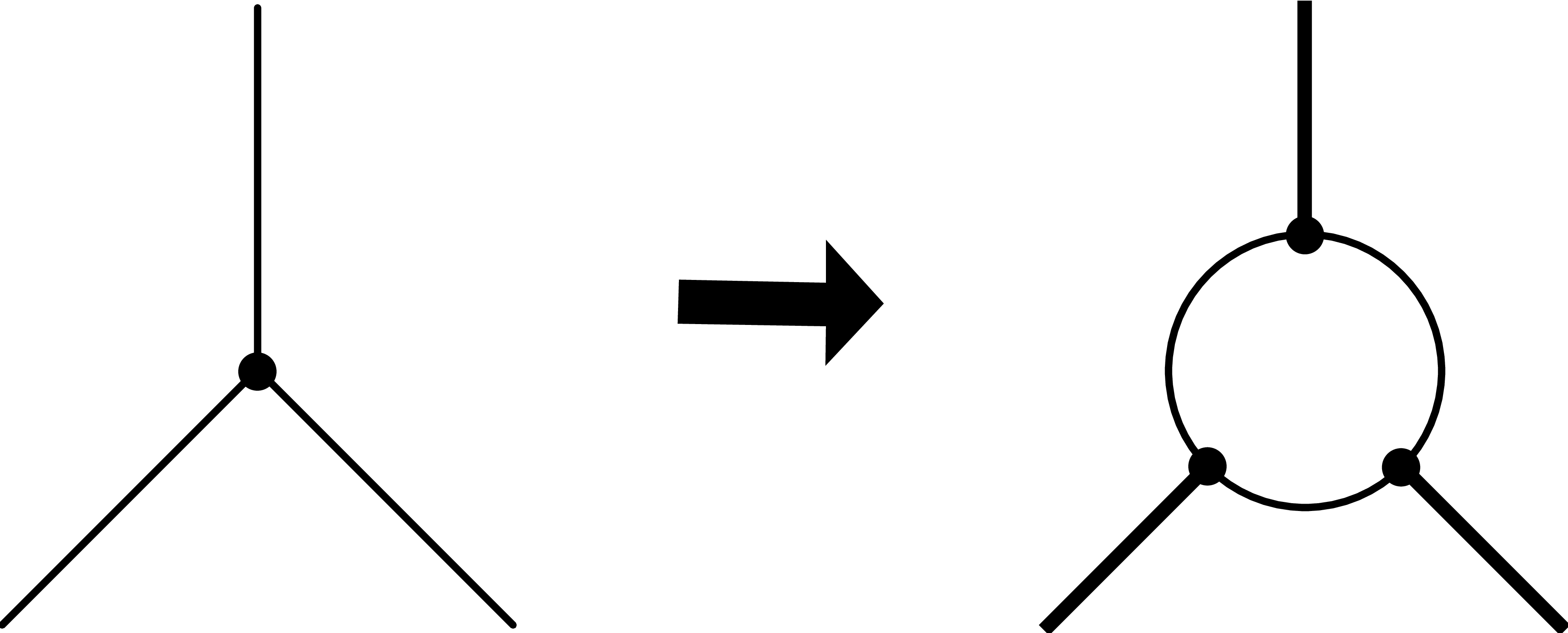}
\end{center}
with perfect matching given by the original edges of $\Gamma$. If $E$ is the set of edges of $G$, then the resulting perfect matching graph can be identified as $(\Gamma^\flat, E)$.  \label{def:blow-up-of-a-graph}
\end{definition}

The history of this idea can be traced back to ``patches'' described by Kempe \cite{Kempe} in his attempted-proof of the four color theorem and, slightly later, to Tait \cite{Tait} who used the blow-up of a general plane graph to show that only trivalent plane graphs need be considered in proving the four color theorem. 

\begin{remark}
Note that the all $0$-smoothing state of $\Gamma^\flat$ represents the set of faces of $\Gamma$. 
\end{remark}

A flip move on $\Gamma$ induces a corresponding flip move on $\Gamma^\flat$ as follows: A flip move on $\Gamma$ is defined for any flipping disk whose boundary intersects the interior of zero, one, or two edges of $\Gamma$ as in Theorem~\ref{thm:graph-iso}.  The boundary of the corresponding disk on $\Gamma^\flat$ will intersect in the same number of points, but now the boundary will always intersect perfect matching edges.  Let $(G^\flat,E)$ denote the equivalence class of  perfect matching graphs under such moves (cf. Theorem~\ref{thm:graph-iso} and Theorem~\ref{thm:perfect-matching-iso}). The polynomial and cohomology invariants of this paper are invariant under this set of restricted flip moves on $\Gamma^\flat$, giving the following invariants of $G$:  

\begin{theorem}
Let $G$ be a planar trivalent graph, $E$ be the set of edges of $G$, and $\Gamma$ be a plane graph of $G$.  Define the {\em $2$-factor polynomial of $G$} to be $\tfb{G}:=\tfp{\Gamma^\flat}{E}$ and the {\em cohomology of $G$} to be $H^{*,*}(G):=H^{*,*}(\Gamma^\flat,E)$. Then the polynomial and cohomology are invariants of the graph $G$.  Furthermore, the $2$-factor polynomial of $G$ is the graded Euler characteristic of its cohomology. \label{thm:cohomology-of-a-graph}
\end{theorem}

Using the blow-up turns any polynomial described in Theorem~\ref{theorem:general_poly_invariants} into an invariant of planar graphs.  For example, the four-color polynomial of a planar graph $G$, $P_4(G):=P_4(G^\flat,E)$, is an invariant of $G$. It continues to count the number of $3$-edge colorings of $G$ when evaluated at one, i.e., $P_4(G)(1)=[G]$. (See the definition of the Penrose Formula $[G]$ above Definition~\ref{defn:four-color-poly}). This is because the $3$-edge colorings at a vertex are in one-to-one correspondence with the $3$-edge colorings of the blow-up of that vertex.

Unlike the four-color polynomial of $G$, the $2$-factor polynomial of $G$ is  zero when evaluated at one: the blow-up creates an odd cycle for each vertex, and by Theorem~\ref{thm:existence_of_even_perfect_matching},  $\tfb{G}(1)=0$.  This zero is not a problem, however.  It turns out that the cochain complex for $(G^\flat,E)$ is balanced between cochain ``subcomplexes'' whose Euler characteristics each report the number of $3$-edge colorings, but with opposite signs. Summing up over these Euler characteristics is zero, which is what $\tfb{G}(1)=0$ is reporting.  Isolating one of these cochain subcomplexes answers the second question above. To motivate how to do this, we need to briefly describe Roger Penrose's seminal work on abstract tensor systems.

In \cite{Penrose}, Roger Penrose derived the Penrose formula $[G]$ from a regular Cartesian abstract tensor system with dimension $\nu=3$, i.e., with loop value 3 (cf. pages 233-234 of \cite{Penrose} where $\nu$ is defined).  He then showed how to translate this count into a negative dimensional abstract tensor system with dimension $\nu=-2$.  This ``binor system'' can be extended to a new polynomial bracket in the language of this paper, which we will call the {\em binor polynomial}, $\langle G\rangle_{binor}$, defined as follows: on the blow-up of the trivalent plane graph, use the bracket defined by $\langle \PMEdgeDiag \rangle = \langle \IIDiag  \rangle  - q \langle \XDiag \rangle$ and $\langle \bigcirc \rangle = -q^{-1}-q$ in Theorem~\ref{theorem:general_poly_invariants}.  One can then recover Penrose's formula from the binor polynomial by evaluating it at one: 

\begin{proposition}[See page 238 of \cite{Penrose}] \label{prop:binor-polynomial}
The binor polynomial of a trivalent graph $G$ satisfies:
$$[G]= \left(-\frac14\right)^{\frac12|V|}\langle G \rangle_{binor}(1),$$
where $|V|$ is the number of vertices of $G$.
\end{proposition} 

Finally, he reinterpreted the binor system on the edges of the blow-up of the graph into a special vertex formula on the vertices of the original graph. This vertex formula leads to an abstract tensor system with dimension $\nu=2$ (loop value 2) on the blow-up of the graph.  The formula he defines continues to count the $3$-edge colorings of the original planar graph (cf. pages 239--240 of \cite{Penrose}).

The vertex formula can also be extended to a new polynomial bracket  on the vertices using the language of this paper.  Let $\Gamma$ be a  planar graph of a trivalent graph $G(V,E)$ and let $(\Gamma^\flat, E)$ be the blow-up of $\Gamma$ with its canonical perfect matching $E$.  Let $\Gamma_\triangle$ be the all $0$-smoothing of $(\Gamma^\flat, E)$ where a triangle has been placed in the region whenever three circles are adjacent (see the left-hand side picture of Equation~\ref{eq:vertex-bracket}).  These adjacencies occur at each of the vertices in the original $\Gamma$.  The {\em vertex bracket $\langle \Gamma_{\triangle} \rangle_{v}$} on $\Gamma_\triangle$ is characterized by:

\begin{eqnarray}
\bigg\langle \vertexbracketvertex \bigg\rangle_{\! v} &=&  \bigg\langle \vertexbracketzero \bigg\rangle_{\! v} \ - \ q^3 \bigg\langle\vertexbracketone \bigg\rangle_{\! v} \label{eq:vertex-bracket}\\ 
\bigg\langle \bigcirc  \bigg\rangle_{\! v} & = & q+q^{-1}  \label{eq:vertex-loop-value}\\
\bigg\langle \Gamma_1 \sqcup \Gamma_2 \bigg\rangle_{\! v}&=& \bigg\langle \Gamma_1 \bigg\rangle_{\! v} \cdot \bigg\langle \Gamma_2 \bigg\rangle_{\! v}\label{eq:vertex-disjoint-union}
\end{eqnarray}

It is instructive to calculate the vertex bracket (and hypercube generated by it) for the theta graph $\theta$. First, the blow-up of $\theta$ and $\Gamma_\triangle$ are:

\begin{center}
\psfragscanon
\psfrag{b}{$\flat$}\psfrag{=}{$=$}
\psfrag{G}{$\Gamma_\triangle  \ = $}
\includegraphics[scale=.5]{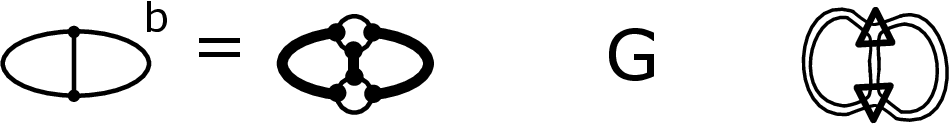}
\end{center}

\noindent Using Equation~\ref{eq:vertex-bracket} on $\Gamma_\triangle$ gives four states, which can be arranged into a hypercube:

\begin{figure}[H]
\psfragscanon
\psfrag{b}{$\flat$}\psfrag{=}{$=$}
\psfrag{G}{$\Gamma_\triangle  \ = $}
\includegraphics[scale=.17]{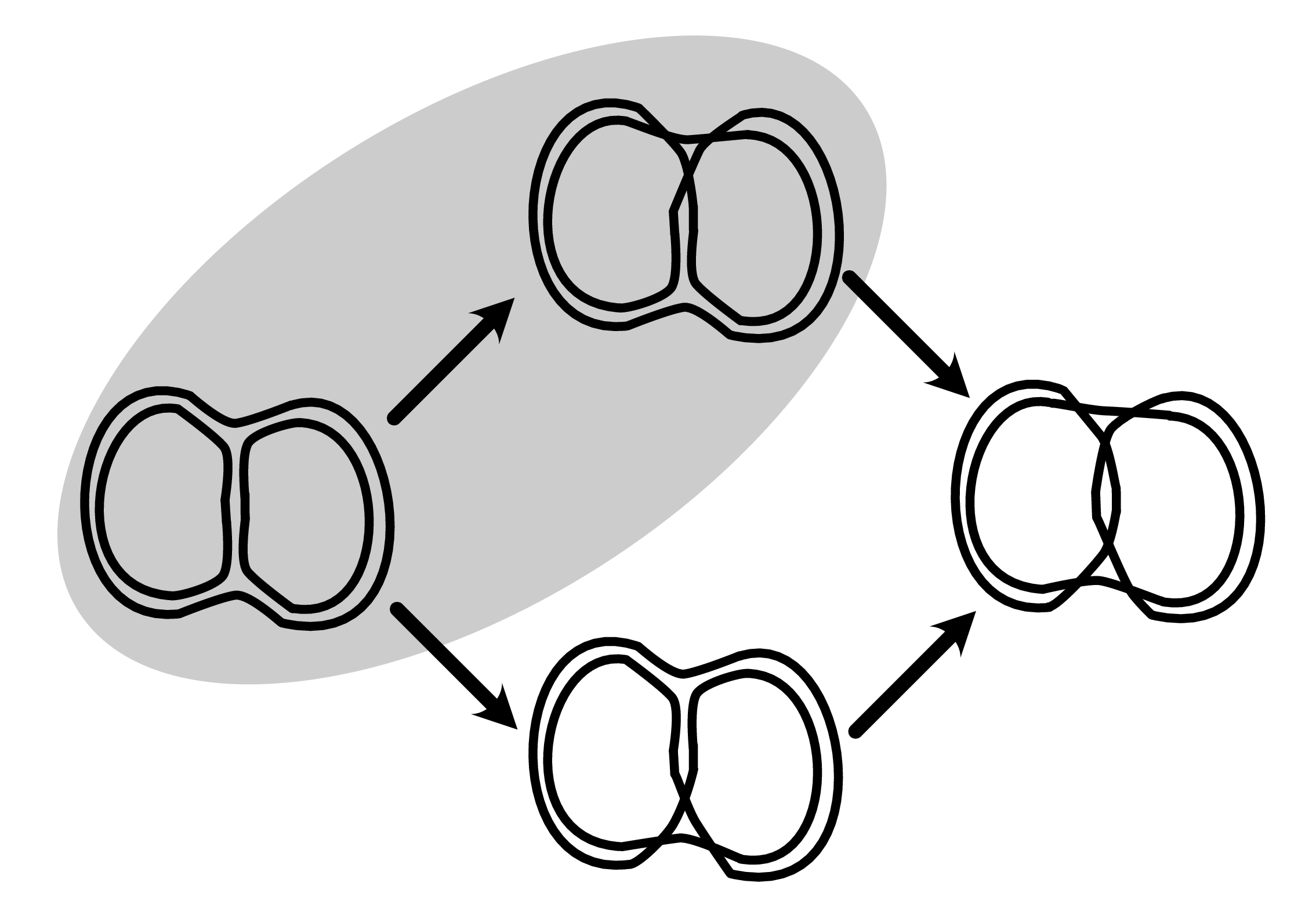}
\caption{The hypercube of states for the vertex bracket of the theta graph.}\label{fig:vertex-state-of-theta}
\end{figure}

\noindent The polynomial can be calculated from the hypercube to get, \begin{equation} \langle \Gamma_\triangle \rangle_v (q)=(q^{-1}+q)^3 - q^3(q^{-1}+q) - q^3(q^{-1}+q)+q^6(q^{-1}+q)^3,\label{eq:vertex-poly-of-theta}
\end{equation}
or $\langle \Gamma_\triangle \rangle_v(q) = q^{-3}+3q^{-1} + 3q -2q^2+2q^3 -2q^4 +3q^5+3q^7+ q^9$.

In addition to having loop value 2 again (when substituting $q=1$ in Equation~\ref{eq:vertex-loop-value}), the vertex bracket is equivalent to taking $0$-smoothings on each of the edges of the blow-up associated to the vertex to get the first term on the right-hand side of Equation~\ref{eq:vertex-bracket} and taking $1$-smoothings on each of the edges of the blow-up of the vertex to get the second term.  When a state corresponds to an edge of the original plane graph with two $1$-smoothings on it (one for each vertex), the two $1$-smoothings are equivalent to a $0$-smoothing.  For example, the all $1$-smoothing state of $\Gamma_\triangle$ in the third column of Figure~\ref{fig:vertex-state-of-theta} is equivalent to the all $0$-smoothing state in the first column. Similarly, the two states in the middle column are equivalent to each other.  In terms of the hypercube of states of $(\Gamma^\flat, E)$ of the theta graph, the states in the shaded region of Figure~\ref{fig:vertex-state-of-theta} are equivalent to the all $0$-smoothing state and all $1$-smoothing state of $(\Gamma^\flat, E)$.  In a general planar graph, the set of states of the vertex bracket is naturally a subset of the hypercube of smoothings of the blow-up of the graph.  Furthermore, the mapping from states generated by $\Gamma_\triangle$ to a subset of states of $(\Gamma^\flat,E)$ is two-to-one (after resolving all edges with two $1$-smoothings  into $0$-smoothings as described above).  

This two-to-one mapping allows us to use the cochain complex of $(\Gamma^\flat,E)$ to prove that the vertex polynomial, defined below, is an invariant of the graph:

\begin{theorem}\label{thm:vertex-poly}
Let $G(V,E)$ be a planar trivalent graph and $\Gamma$ be a plane graph of $G$.  Let $\Gamma_\triangle$ be the vertex graph associated to the all $0$-smoothing state of $(\Gamma^\flat,E)$. Define the {\em vertex polynomial of $G$} to be $\langle G \rangle_v :=\langle \Gamma_\triangle\rangle_v$. Then the vertex polynomial is invariant under the plane graph used to define it, i.e.,  the polynomial is an invariant of $G$. Furthermore, 
$$[G]= \left(\frac12\right)^{\frac12|V|}\langle G \rangle_v(1),$$
where $|V|$ is the number of vertices of $G$.
\end{theorem}

\begin{proof} Invariance is a corollary of Theorem~\ref{theorem:general_poly_invariants}, the discussion above Theorem~\ref{thm:cohomology-of-a-graph}, and the fact that states generated from $\Gamma_\triangle$ via Equation~\ref{eq:vertex-bracket} are equivalent to a subset of the states of the blow-up of the graph (whose number of circles for each state are invariant under  flip moves).  The formula follows from observing that the state sum obtained from evaluating the vertex polynomial at $q=1$ matches the equation in the middle of  page 240 of \cite{Penrose} and the discussion that leads to that equation.
\end{proof}

For an example of how the vertex polynomial counts the number of $3$-edge colorings for the theta graph $\theta$, a calculation shows that $\frac12 \left(\langle \theta \rangle_v(1)\right) = \frac12(8-2-2+8) = 6$ by substituting $q=1$ into the vertex polynomial Equation~\ref{eq:vertex-poly-of-theta}, which matches the well known fact that $[\theta]=6$.

A close inspection of the hypercube of states of the vertex bracket reveals that the vertex polynomial can be categorified into a flip-move invariant cohomology theory.  We briefly describe the setup for such a theory using the theta graph $\theta$ as a guide. For a graph $G(V,E)$ and a perfect matching graph $(\Gamma^\flat,E)$ of a plane graph of it, we get  the associated hypercube of states for the vertex bracket of $\Gamma_\Delta$. This hypercube is a set of states (vertices of the hypercube) with directed edges between states produced in the same way as other hypercubes have been defined in this paper (compare Figure~\ref{fig:vertex-state-of-theta} to Figure~\ref{fig:Theta_3_cube_of_states}). For each state of the hypercube, associate a vector space $V_\alpha$ to the state $\alpha$ by taking the tensor product $V^{\ot k}$ where $V=\BZ_2[x]/(x^2)$ and $k$ is the number of circles in the state. The gradings on $V_\alpha$ are the same as in Equation~\ref{eq:degrees-of-V} and are shifted up by the number of $\vertexbracketone$'s in the state.

Next, associate a map to each edge of the hypercube in the following manner.  Two states in the vertex bracket hypercube that are connected by an edge correspond to states in the larger hypercube of $(\Gamma^\flat,E)$ that are always three edges away from each other.  For example, the two states in the shaded region of Figure~\ref{fig:vertex-state-of-theta} correspond to column zero and column three states of Figure~\ref{fig:3-prism-hypercube-of-states}. (The perfect matching graph of $(P_3, L)$ in Figure~\ref{fig:P_3} is equivalent to the blow-up of the theta graph via a 2-flip move.) Note that the diagram to each face of the hypercube of $(\Gamma^\flat,E)$ commutes. Thus, one may choose any $3$-edge path from the first state in $(\Gamma^\flat,E)$ to the final state---any other path would give an equivalent map.  With these maps, any face of the vertex bracket hypercube would also commute (for the same reason any $3$-edge path can be chosen). Thus, one gets a cohomology theory $H_\Delta^{i,j}(\Gamma_\Delta;\BZ_2)$.  By Theorem~\ref{theorem:main-theorem-of-cohomology}, since  the cochain complex of $(\Gamma^\flat, E)$ is invariant under flip moves, this cohomology theory is also invariant under flip moves. Define $H_\Delta^{i,j}(G;\BZ_2) :=H_\Delta^{i,j}(\Gamma_\Delta;\BZ_2)$ for some plane graph $\Gamma$ of $G$.  

\begin{example} \label{ex:vertex-cohomology}The theta graph has the following set of cochain groups and maps:

\begin{center}
\parbox{1in}{
\xymatrix{
& V\{3\}\ar[dr]^{\Delta\circ\Delta\circ A} \\
V^{\ot 3}\{0\}  \ar[ur]^{A\circ m \circ m} \ar[dr]_{A\circ m \circ m} & & V^{\ot 3}\{6\}  \\
& V\{3\}\ar[ur]_{\Delta\circ\Delta\circ A} 
}}
\end{center}

 \noindent Since each of the maps in the diagram above contains an $A$ map, each is the zero map.  Thus, the cohomology $H_\Delta(\theta; \BZ_2)$ is given by the cochain groups and the graded Euler characteristic of this cohomology is $\langle \Gamma_\triangle \rangle_v (q)$ (cf. Equation~\ref{eq:vertex-poly-of-theta}).
 \end{example}

One might ask if any composition of three maps for an edge of the vertex bracket hypercube always contains an $A$ map.  If true, this would imply that the vertex bracket polynomial has the same information as the cohomology.  Interestingly, there are configurations for nonplanar trivalent graphs that give rise to maps such as $\Delta \circ m \circ \Delta$.  Unfortunately, these maps still evaluate to zero in a $\BZ_2$-coefficient theory: $\Delta \circ m \circ \Delta (1) = 2 x \ot x$. However, in a $\BZ$-coefficient theory, this map would  be nonzero and the cohomology would have more information than the vertex polynomial.  This would be guaranteed if Theorem~\ref{thm:main-theorem} could be upgraded to a $\BZ$-coefficient theory.  This is likely (cf. \cite{BKR}), but the proof of invariance of the flip moves is well beyond the scope of this paper.  We offer it as a conjecture for future research:

\begin{conjecture}
Let $(G,M)$ be a trivalent planar graph $G$ with perfect matching $M$, and let $\Gamma$ be a perfect matching graph of the pair.  Then there exists a cochain complex $(C^{i,j}(\Gamma),\partial)$ with $\BZ$-coefficients that is invariant under $2$-flip moves (compare to Theorem~\ref{theorem:main-theorem-of-cohomology}).
\end{conjecture}

\section{Conclusion}
\label{section:conclusion}

This paper is a new approach to investigating ribbon graphs in terms of topological quantum field theories.  It shows that the $2$-factor polynomial of the pair $(G,M)$ can be categorfied into a cohomology theory.  Of the recent articles on webs and foams, this theory is the closest to how Khovanov originally categorified the Jones polynomial since it directly uses a Kauffman-like bracket to define it.  This ``closeness'' is useful because one can transfer many ideas from knot theory directly over to graph theory.

The cohomology and 2-factor polynomial introduced in this paper are fundamentally different from Khovanov homology and the Jones polynomial in the following sense: unlike TQFTs in knot theory, there are cohomology theories of graphs (some of which are described or conjectured in this paper) that are not locked into using a vector space $V$ of dimension 2. In fact, one can define cohomology theories of trivalent graphs using $V=\BZ_2[x]/(x^n)$ for $n>1$ (this is future research with Ben McCarty). In these examples, one can think of $n=2$ as the case where these theories overlap with the Jones-Khovanov theory. 

Thus, graphs have a wider variety of cohomology theories associated to them than what one finds for knots and links in knot theory.  Since these cohomology theories are directly related to abstract tensor systems, Feynman diagrams, colorings of graphs, representation theory, and possibly even (delta) matroids, there is a vast, fertile ground of useful new TQFT-like theories to explore based upon the ideas presented here.


\begin{thebibliography}{99}

\bibitem{AH} K. Appel and W. Haken, {\em  Every planar map is four colorable}, Contemporary
Mathematics, {\bf 98}, With the collaboration of J. Koch, Providence, RI:  American Mathematical Society, Providence, 1989.






\bibitem{BM} S. Baldridge, A. Lowrance, and B. McCarty, {\em The 2-factor polynomial detects
even perfect matching}, The Electronic Journal of Combinatorics {\bf 27} (2020),
no. 2, P2.27, 16 pp. doi: 10.37236/9214, arXiv:1812.10346.


\bibitem{BKM} S. Baldridge, L. Kauffman, and B. McCarty, {\em  Unoriented Khovanov Homology}, New York Journal of Mathematics {\bf 28} (2022), 367-401, arXiv: 2001.04512.

\bibitem{BKR} S. Baldridge, L. Kauffman, and W. Rushworth, {\em On ribbon graphs and virtual links}, European Journal of Combinatorics {\bf 103}, June 2022, doi: 10.1016/j.ejc.2022.103520, arXiv: 2010.04238.


%
%


\bibitem{BN} D. Bar Natan, {\em On Khovanov's categorification of the Jones polynomial}, Algebraic and Geometric Topology {\bf 2} (2002), no. 16, 337-370.

\bibitem{BN2} D. Bar Natan, {\em  Lie algebras and the four color theorem}, Combinatorica {\bf 17} (1997), no. 1,  43-52.





\bibitem{EKKKN} L. Esperet, F. Kardo\v{s}, A. D. King, D.  Kr\'{a}l', and S. Norine, {\em Exponentially many perfect matchings in cubic graphs}, Advances in Mathematics {\bf 227} (2011), no. 4, 1646-1664.

\bibitem{Err} A. Errera, {\em Du colorage des cartes}, Mathesis {\bf 36} (1922), 56-60.


\bibitem{G} J. E. Greene, \emph{Lattices, graphs, and Conway mutation}, Invent. Math. {\bf 192} (2013), no. 3, 717-750.


\bibitem{H} P. J. Heawood, {\em Map-colour theorem}, Quarterly Journal of Mathematics {\bf 24} (1890), 332-338.


\bibitem{Kauffman} L. H. Kauffman, {\em A state calculus for graph coloring}, Illinois Journal of Mathematics {\bf 60} (2015), no. 1, 251-271.

\bibitem{Kauffman2} L. H. Kauffman, {\em State models and the Jones polynomial}, Topology {\bf 26} (1987), no. 3, 395-407.

\bibitem{Kauffman3} L. H. Kauffman, {\em Map coloring and the vector cross product}, J. Combin. Theory Ser. B {\bf 48} (1990), 145-154.

\bibitem{Kempe} A. B. Kempe, {\em On the Geographical Problem of the Four Colours}, American Journal of Mathematics, The Johns Hopkins University Press {\bf 2} (1879), no. 3, 193-220.

\bibitem{K1} M. Khovanov, {\em A categorification of the Jones polynomial}, Duke Math
J. {\bf 101} (1999), no. 3, 359-426.


\bibitem{KR} M. Khovanov and L-H. Robert, {\em Foam evaluation and Kronheimer--Mrowka theories}, Advances in Mathematics {\bf 376} (2021), 07433.


\bibitem{ArbitraryCoeffs}
V.~Manturov.
\newblock \uppercase{K}hovanov homology for virtual knots with arbitrary
  coefficients.
\newblock {\em J. Knot Theory Ramifications}, {\bf 16} (2007), no. 3, 345--377.





\bibitem{KM1} P.B. Kronheimer and T.S. Mrowka, {\em A deformation of instanton homology for webs}, Geometry \& Topology {\bf 23} (2019), 1491-1547.

\bibitem{KM2} P.B. Kronheimer and T.S. Mrowka, {\em Exact triangles for SO(3) instanton homology of webs}, Journal of Topology {\bf 9} (2016), no. 3, 774-796.

\bibitem{KM4} P.B. Kronheimer and T.S. Mrowka, {\em Khovanov homology is an unknot-dectector}, Publ.math.IHES {\bf 113} (2011), no. 1, 97-208. 

\bibitem{KM3} P.B. Kronheimer and T.S. Mrowka, {\em Tait colorings, and an instanton homology for webs and foams}, Journal of the European Mathematical Society {\bf 21} (2019), no. 1, 55-119.

\bibitem{LP} L. Lov\'{a}sz and M.D. Plummer, {\em Matching Theory}, Elsevier Science, Amsterdam, 1986.

\bibitem{MT} B. Mohar and C. Thomassen, \emph{Graphs on surfaces}, Johns Hopkins Studies
in the Mathematical Sciences, Johns Hopkins University Press, Baltimore, 2001.

\bibitem{Penrose} R. Penrose, ``Applications of negative dimensional tensors,'' in {\em Combinatorial Mathematics and Its Applications}, Academic Press (1971).

\bibitem{P} J. Petersen, {\em Die Theorie der regul\"{a}ren graphs}, Acta Mathematica, {\bf 15} (1891), 193-220.


\bibitem{R} J. Rasmussen, {\em Khovanov homology and the slice genus}, Inventiones Mathematicae, {\bf 182} (2010), no. 2, 419-447.

\bibitem{RW} L-H. Robert and E. Wagner, {\em A closed formula for the evaluation of $\mathfrak{sl}_N$-foams}, Quantum Topology, {\bf 11} (2020), no. 3, 411-487.

\bibitem{RSST} N. Robertson, D. P. Sanders,P. Seymour, R. Thomas, {\em The four-colour theorem}, J. Combin. Theory
Ser. B {\bf 70} (1997), 2-44.


\bibitem{Tait} P. G. Tait, {\em Note on a Theorem in Geometry of Position}, Trans. Roy. Soc. Edinburgh {\bf 29} (1880), 657-660.

\bibitem{T1} W. T. Tutte, {\em The Factorization of Linear Graphs}, J. London Math. Soc. {\bf 22} (1947), 107-111.

\bibitem{T2} W. T. Tutte, {\em On the $2$-factors of bicubic graphs}, Discrete Math. {\bf 1} (1971), no. 2, 203-208.




\bibitem{W} H. Whitney, {\em $2$-Isomorphic graphs}, Amer. J. Math. {\bf 55} (1933), no. 1-4, 245-254.





\end{thebibliography}
\end{document}